\def\es{\emptyset}
\def\eps{\varepsilon}
\def\dist{\mathrm{dist}}
\def\maxim{\mathrm{max}}
\def\ort{\mathrm{ort}}
\def\Var{\mathsf{Var}}
\def\cI{\mathcal{I}}
\def\bone{\mathbf{1}}
\def\bzero{\mathbf{0}}
\def\bn{\mathbf{n}}
\def\bN{\mathbf{N}}
\def\bp{\mathbf{p}}
\def\br{\mathbf{r}}
\def\bs{\mathbf{s}}
\def\cB{\mathcal{B}}
\def\cF{\mathcal{F}}
\def\cG{\mathcal{G}}
\def\cK{\mathcal{K}}
\def\cR{\mathcal{R}}
\def\cS{\mathcal{S}}
\def\bu{\mathbf{u}}
\def\bw{\mathbf{w}}
\def\bx{\mathbf{x}}
\def\bX{\mathbf{X}}
\def\by{\mathbf{y}}
\def\bY{\mathbf{Y}}
\def\bz{\mathbf{z}}
\def\P{\mathbf{P}}
\def\E{\mathbf{E}}
\def\N{\mathbb{N}}
\def\R{\mathbb{R}}
\newcommand{\eqn}[2]{\begin{equation}\label{#1}#2\end{equation}}
\newcommand{\eqnst}[1]{\begin{equation*}#1\end{equation*}}
\newcommand{\eqnspl}[2]{\begin{equation}\begin{split}\label{#1}%
    #2\end{split}\end{equation}}
\newcommand{\eqnsplst}[1]{\begin{equation*}\begin{split}%
    #1\end{split}\end{equation*}}
\theoremstyle{plain}
\newtheorem{theorem}{Theorem}
\newtheorem{lemma}[theorem]{Lemma}
\newtheorem{proposition}[theorem]{Proposition}
\newtheorem{corollary}[theorem]{Corollary}
\theoremstyle{definition}
\newtheorem{problem}{Question}
\theoremstyle{remark}
\newtheorem*{remark}{Remark}
\newtheorem*{conjecture}{Conjecture}
\begin{document}

\title{Phase transition in a sequential assignment problem on graphs}
\author{Antal A.\ J\'arai\thanks{Department of Mathematical 
Sciences, University of Bath,
Claverton Down, Bath BA1 7AY, United Kingdom. 
E-mail: {\tt A.Jarai@bath.ac.uk}}}

\maketitle

\begin{abstract}
We study the following sequential assignment problem on a finite 
graph $G = (V, E)$. Each edge $e \in E$ starts with an integer value
$n_e \ge 0$, and we write $n = \sum_{e \in E} n_e$. At time $t$,
$1 \le t \le n$, a uniformly random vertex $v \in V$ is generated,
and one of the edges $f$ incident with $v$ must be selected. 
The value of $f$ is then decreased by $1$. There is a unit final 
reward if the configuration $(0, \dots, 0)$ is reached. Our main result 
is that there is a \emph{phase transition}: as $n \to \infty$,
the expected reward under the optimal policy approaches a constant 
$c_G > 0$ when $(n_e/n : e \in E)$ converges to a point in the 
interior of a certain convex set $\cR_G$, and goes to $0$ exponentially 
when $(n_e/n : e \in E)$ is bounded away from $\cR_G$. We also obtain
estimates in the near-critical region, that is when 
$(n_e/n : e \in E)$ lies close to $\partial \cR_G$. 
We supply quantitative error bounds in our arguments.
\end{abstract}

\emph{Keywords:} phase transition, critical phenomenon,
stochastic sequential assignment, Markov decision process, 
stochastic dynamic programming, discrete stochastic optimal control.

\section{Introduction}
\label{sec:intro}

Consider the following game (known in different versions 
\cite{J16}, \cite[Section 1.7]{Pbook}). 
Players start with a row of $N$ empty boxes. In each of $N$ rounds, 
a random digit is generated, and each player has to place it 
into one of the empty boxes they have. A player's score is 
the $N$ digit number obtained after the last round. 
The game is a special case of \emph{sequential stochastic assignment}
introduced by Derman, Lieberman and Ross \cite{DLR72}. In sequential 
assignment, there are $N$ jobs with given values $p_1 \le \dots \le p_N$
that have to be assigned to $N$ workers, as they appear in sequence. 
The $i$-th worker has ability $X_i$, where $X_1, \dots, X_N$ are i.i.d.~random 
variables from a given distribution $F$. The reward from assigning 
the job of value $p_i$ to a worker with ability $x$ is $p_i x$,
and the overall reward of the assignment is the sum of the individual
rewards. The game mentioned at the start is recovered when 
$p_i = 10^{i-1}$, and $X_i$ is uniform in $\{ 0, \dots, 9 \}$. 

The paper \cite{DLR72} showed that there is a strategy that maximizes the 
expected score independently of what $p_1, \dots, p_N$ are. This 
strategy has the following form. There are numbers 
$-\infty = a_{0,n} \le a_{1,n} \le \dots \le a_{n-1,n} \le a_{n,n} = \infty$,
$n \ge 1$, that only depend on the distribution $F$, such that if there are
$n$ jobs remaining to be assigned, with values $p'_1 \le \dots \le p'_n$,
and the next worker has ability $x$ with 
$a_{i-1,n} \le x \le a_{i,n}$, then the worker is assigned to the
job with value $p'_i$.

Albright and Derman \cite{AD72} showed, using law of large numbers type arguments, that when $F$ is absolutely 
continuous, one has $\lim_{n \to \infty} a_{q n, n} = F^{-1}(q)$, $0 < q < 1$,
as $n \to \infty$. In particular, when the number $n$ of jobs is large,
a worker with ability $x$ should be assigned to a job with rank approximately 
$q n$, where $F^{-1}(q) = x$. Note that when $F$ is discrete, 
this way of determining the asymptotics breaks down: when $x$
is an atom of $F$, the graph of $F^{-1}$ has a horizontal
piece at height $x$. For large finite $n$, the value of $q$
where the profile $a_{q n, n}$ crosses height $x$ can be expected to be somewhere in the corresponding interval of constancy of $F^{-1}$, and its precise location can be expected to be governed by large deviation effects.

In order to motivate the subject of our paper, consider the 
following modification of the game mentioned at the beginning. Suppose that each digit can take 
the values $1, \dots, k$, with equal probability. Also suppose that 
the goal of the player is to maximize the probability of achieving 
the maximum possible score, that is to reach the unique final assignment 
consisting of $k$ contiguous intervals of equal digits. 
Let $\tau$ be the first time when all $k$ numbers have occurred
at least once. At time $\tau$, the empty boxes form $k-1$ intervals
of lengths $n_1, \dots, n_{k-1}$, where $n - \tau = \sum_{i=1}^{k-1} n_i$. 
The $i$-th interval has a box filled with $i$ adjacent to it on the right, and
a box filled with $i+1$ adjacent to it on the left.
It is plausible that there exist numbers 
$0 = \alpha_1 < \alpha_2 < \dots < \alpha_{k-1} < \alpha_{k} = 1$, such that for large $n$, under the optimal strategy, 
$n_i/n \sim \alpha_{i+1} - \alpha_i$, $i = 1, \dots, k-1$.
We will be interested in the following question.
Suppose that an 
alternative position is imposed on the player, where the intervals have
length $n'_i \sim (\beta_{i+1} - \beta_i) n'$, $i = 1, \dots, k-1$, where
$0 = \beta_1 < \beta_2 < \dots < \beta_{k-1} < \beta_k = 1$. 
What is the behaviour of the probability that the player
can achieve the maximal score \emph{from this position}?

We show that the above probability displays a 
sharp transition in the limit $n' \to \infty$. 
When the vector $(\beta_{i+1} - \beta_i : i = 1, \dots, k-1)$ 
lies in the interior of a certain convex set $\cR_k$, the probability approaches a positive constant, 
whereas it goes to $0$ exponentially when 
the vector is at a positive distance from $\cR_k$.

More generally, we consider the above transition on a general finite graph $G = (V,E)$
with vertices labelled $1, \dots, k$. The starting position is a vector 
$(n_e : e \in E)$, and $n = \sum_{e \in E} n_e$. When a number 
$1 \le i \le k$ is rolled, one of the edges $f$ incident with vertex $i$
is selected by the player, and the value assigned to edge $f$ is decreased
by $1$. We assign a final reward of $1$ when the configuration 
$(0, \dots, 0)$ is reached, and refer to this as `winning'.
In the game described at the beginning, the graph is a path of length $k-1$.

We believe the study of this model is interesting for a number of reasons.
\begin{enumerate}
\item Questions of reachability have been studied in control theory for a 
long time \cite[Sections 19,20]{PBGMbook}. In our model, the controllable
set $\cR_G$, that allows the player to reach the state $(0,\dots,0)$ with
uniformly positive probability, has a simple characterization, 
which however involves the graph structure in a non-trivial way; 
see Eqn.~\eqref{e:RG-def} and Lemma \ref{lem:prop-R_G}. As we show, 
choosing the right control is only essential near $\partial \cR_G$.
We believe our model, that is tractable on a general graph, is a useful
example system to have in understanding the behaviour of discrete controlled 
systems with spatial structure near critical regions. Indeed, the main technical
effort in this paper is getting estimates in the near critical region,
that we do in Section \ref{sec:critical-proof}.
\item In deriving the optimal strategy for sequential assignment, 
Derman, Lieberman and Ross \cite{DLR72} used 
Hardy's inequality, of which we have no analogue on graphs.
Our proofs work without knowledge of the optimal strategy,
and only rely on martingale and Lyapunov function techniques, as well
as an explicit relationship between $\cR_G$ and available controls.
Thus our arguments may be adaptable to other models.
It may be that the transition phenomenon itself can be
established with less effort, given more information 
on the optimal strategy (see for example Question \ref{prob:exp-converge} 
in Section \ref{sec:open}). Nevertheless, we believe that the
quantitative bounds we derive are of independent interest.
\item As the title of this paper suggests, we view the
transition studied in this paper as an instance of a critical
phenomenon.\footnote{A reader unfamiliar with critical phenomena 
can find a good introduction in the short text \cite{Gbook10}. 
We note that such familiarity is not required for understanding 
this paper.} 
While such transitions are ubiquitous in stochastic 
control, we found little in the literature that connects them 
with critical phenomena. We believe that such a point of view 
can be beneficial, and was indeed our original motivation for
this study. Examples of works in the physics 
literature that address an interplay between controllability 
and network structure are \cite{NV12,JLCPSB13,SM13}.
\item Further problems that are important for applications can be studied
in our model or suitable modifications thereof. For example, we
see no obvious \emph{distributed} control, where vertices would 
only have local information about the graph structure.
\end{enumerate}

\subsection{Definition of the model}
\label{ssec:model}

Throughout $G = (V,E)$ will be a finite connected simple graph
(without multiple edges or loops). We write 
$k = |V|$, and assume $|E| \ge 2$ (the case with 
one edge being trivial). 
We write $\deg_G(v)$ for the degree of $v \in V$, and 
$\deg_F(v)$ for the degree of $v$ in the 
subgraph of $G$ induced by the set of edges $F \subset E$. 

The state at time $0 \le t \le n$ is an integer vector 
$\bN(t) = (N_e(t) : e \in E)$, where the starting
state is $\bN(0) = \bn = (n_e : e \in E)$. Usually we will 
use capitalized letters for random variables or random 
processes, and lowercase letters for their possible values.
We write
$n = \sum_{e \in E} n_e$. Let $V_1, \ldots V_n \in V$ be an
i.i.d.~sequence of vertices with $\P [ V_i = v ] = \frac{1}{k}$, 
$v \in V$, $i = 1, \dots, n$.
If the player allocates $V_t$ to the edge $e$ incident
with $V_t$, the state is updated as
\eqnst
{ \bN(t) = \bN(t-1) - \bone^e, \quad \text{ where } \quad
  \bone^e = (1^e_f : f \in E), \quad 
  1^e_f 
  = \begin{cases}
    1 & \text{if $f = e$;}\\
    0 & \text{if $f \not= e$.} 
    \end{cases} }
The gambler wins if $\bN(n) = (0, \dots, 0) \in \N^E$, 
and looses otherwise. We denote by $p_G(\bn)$ 
the probability of winning under the optimal strategy, when the
starting state is $\bn$. This satisfies
\eqn{e:optimality}
{ p_G(\bn)
  = \frac{1}{k} \, \sum_{v \in V} \, \max_{e \in E : e \sim v} \, p_G(\bn - \bone^e), }
known as the \emph{optimality equation} \cite[Section I.1]{Rbook},
where $e \sim v$ means that $e$ is incident with $v$.

We introduce some notation needed to state our main theorem.
We write $\cS_G$ for the probability simplex in $\R^E$, that is, the set 
of non-negative vectors $\bx \in \R^E$ such that $\sum_{e \in E} x_e = 1$. 
We define
\eqnspl{e:RG-def}
{ d(F)
  &= \left| \{ v \in V : \deg_F(v) = \deg_G(v) \} \right|, \quad
     \es \subset F \subset E; \\
  \cR_G
  &= \left\{ \bx \in \cS_G : \text{for all 
     $\es \subsetneq F \subsetneq E$ we have $\sum_{e \in F} x_e > 
     \frac{1}{k} d(F)$} \right\}; \\
  \cI_G
  &= \left\{ \bx \in \cS_G: \text{there exists
     $\es \subsetneq F \subsetneq E$ such that $\sum_{e \in F} x_e <
      \frac{1}{k} d(F)$} \right\}. }
The letters `$d$', `$\cR$' and `$\cI$' are intended to evoke 
`degree', `reachable' and `inaccessible', as we explain.
For any non-empty set 
$F$ of edges, $\frac{d(F)}{k}$ is the probability that the player receives 
a vertex that has full degree in $F$. 
Any such vertex \emph{must} be allocated to one of the edges in $F$. 
For starting positions $\bn = (n_e : e \in E)$ where the proportion 
of space $\sum_{e \in F} n_e / n$ available at the beginning 
is smaller than $d(F)/k$, the probability 
of winning goes to $0$ (as $n \to \infty$).
Therefore, from the region $\cI_G$ the winning
position is asymptotically inaccessible.
On the other hand, as we show in Theorem \ref{thm:phase-trans-graph},
if $\bn = n \, \bx$ with $\bx \in \cR_G$, then the winning position
is asymptotically reachable from $\bn$.
As we point out in Section \ref{ssec:prelim}, the set $\cR_G$ 
arises as the region of controllability for a simple
(deterministic) linear control system associated to the game.
It can be verified that when $G$ is a tree with $k$ vertices 
($k \ge 3$) $\cR_G$ is a parallelepiped. 
As we will not need this fact, we omit the proof.

\begin{remark}
The arguments we present in this paper are also applicable 
to the slightly more general model when $V_1, \dots, V_n$ are not
uniformly distributed (but still i.i.d.). Suppose 
$\P [ V_i = v ] = p_v$ with a probability vector 
$\bp = (p_v : v \in V)$ such that $p_v > 0$ for all $v \in V$.
In this case $\cR_G$ and $\cI_G$ are replaced by 
\eqnsplst
{ \cR_{G,\bp}
  &= \left\{ \bx \in \cS_G : \text{for all 
     $\es \subsetneq F \subsetneq E$ we have $\sum_{e \in F} x_e > 
     \sum_{v : \deg_F(v) = \deg_G(v)} p_v$} \right\}; \\
  \cI_{G,\bp}
  &= \left\{ \bx \in \cS_G: \text{there exists
     $\es \subsetneq F \subsetneq E$ such that $\sum_{e \in F} x_e <
     \sum_{v : \deg_F(v) = \deg_G(v)} p_v$} \right\}, }
As the required changes in the proofs are minor, but including them would
burden the notation further, we state and prove the results only
in the uniform case. All the essential difficulties are already present
in the uniform model.
\end{remark}

\subsection{Main results}
\label{ssec:main-result}

Theorems \ref{thm:phase-trans-graph} and \ref{thm:bdry} below state
our main results. Figure \ref{fig:k4-plot} illustrates these 
when $G$ is a path of length three, that is $k = 4$. 

\begin{theorem}
\label{thm:phase-trans-graph}
Let $G$ be a finite connected simple graph with $|E| \ge 2$. \\
(i) If $\bx \in \cI_G$, and $\bn = n \bx + O(1)$, then
$p_G(\bn) \to 0$ exponentially fast, as $n \to \infty$, 
at a rate depending on $\bx$. The rate of decay is bounded
away from $0$ on subsets bounded away from $\cR_G$. \\
(ii) There exists a constant $c_G > 0$, such that 
if $\bx \in \cR_G$, and $\bn = n \bx + O(1)$, then
$p_G(\bn) \to c_G$, as $n \to \infty$.
\end{theorem}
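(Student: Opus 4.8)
The plan is to analyze the discrete optimal-control problem through a combination of martingale/Lyapunov arguments and a coupling with a simpler linear control system, exploiting the explicit polyhedral description of $\cR_G$ in \eqref{e:RG-def}. The key observation is that for any set of edges $F$, when a vertex $v$ with $\deg_F(v) = \deg_G(v)$ appears, the player is forced to decrease some edge in $F$; hence $\sum_{e \in F} N_e(t)$ decreases on average at rate at least $d(F)/k$ per step, no matter what the player does, while it decreases at rate exactly $d(F)/k$ on average if the player always allocates such forced vertices within $F$ and never allocates any other vertex to $F$. This dichotomy is the engine behind both parts.

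For part (i), fix $\bx \in \cI_G$ and choose the witnessing set $F$ with $\sum_{e \in F} x_e < \tfrac{1}{k} d(F)$. Let $S(t) = \sum_{e \in F} N_e(t)$. Regardless of the strategy, $S(t-1) - S(t)$ is $1$ whenever $V_t$ has full degree in $F$ (probability $d(F)/k$) and is $0$ or $1$ otherwise; so $S(t) - S(0) + t\, d(F)/k$ is a supermartingale with bounded increments. Since $S(0) = \sum_{e\in F} n_e = n\sum_{e\in F} x_e + O(1)$ while reaching $(0,\dots,0)$ requires $S(n) = 0$, i.e. $S$ must drop by $S(0)$ over $n$ steps, and the supermartingale predicts a drop of at least $n\, d(F)/k \gg S(0)$, the event of winning forces a large deviation of a bounded-increment supermartingale below its mean. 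An exponential (Azuma–Hoeffding / Cramér) bound then gives $p_G(\bn) \le \exp(-c(\bx) n)$ with $c(\bx) > 0$ depending only on the gap $\tfrac1k d(F) - \sum_{e\in F} x_e$; taking the best $F$ and noting the gap is bounded below on sets bounded away from $\cR_G$ yields the uniformity claim. The optimality equation \eqref{e:optimality} is used only to pass from the optimal strategy to an arbitrary (adapted) one, which is immediate since the bound holds for every strategy.

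For part (ii), the strategy is to exhibit an explicit (suboptimal) policy that wins with probability bounded below, and separately to show the limit exists and is strategy-independent in the limit $n\to\infty$. Because $\bx$ lies in the open set $\cR_G$, every proper nonempty $F$ has strict slack $\sum_{e\in F} x_e > \tfrac1k d(F)$; as noted in Section \ref{ssec:prelim}, this is exactly the controllability condition for the associated linear system, so the target direction $-\bx$ (suitably scaled) lies in the interior of the cone of expected one-step drifts achievable by the player. Concretely, I would design a policy that keeps the normalized state $\bN(t)/(n-t)$ close to $\bx$: at each step allocate the incoming vertex so as to push the trajectory toward the ray $\{s\bx : s \ge 0\}$, which is feasible by the strict inequalities. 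A Lyapunov/concentration argument (comparing $\|\bN(t) - (n-t)\bx\|$ to a supermartingale with bounded increments, via Azuma) shows that with probability bounded away from $0$ the state stays within $O(\sqrt{n\log n})$ of the ray throughout, and in particular every coordinate stays nonnegative. In the final $O(n^{\epsilon})$ steps one switches to an ad hoc clean-up policy; since the residual configuration is small and "balanced", a direct combinatorial/Markov-chain argument (essentially a local central limit estimate for the coupon-collector-type endgame on $G$) shows the configuration $(0,\dots,0)$ is hit with probability bounded below. Combining, $\liminf p_G(\bn) > 0$. The matching $\limsup$ and the identification of the limit $c_G$ come from a renewal/Markov-chain regeneration argument: the process $\bN(t)/(n-t)$ near $\bx$ mixes, so the winning probability from $\bn = n\bx + O(1)$ depends asymptotically only on $G$, not on the $O(1)$ correction or on $n$; a subadditivity or coupling argument then forces convergence to a single constant $c_G$.

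The main obstacle is part (ii), and within it the near-critical matching of the two endpoints: controlling the trajectory is routine Azuma once the drift cone is understood, but showing the endgame succeeds with probability bounded below — and, harder, that the limit $c_G$ exists and is a genuine constant rather than merely having a positive liminf — requires a delicate regeneration or coupling argument for the rescaled process, since the state space and the "time remaining" both scale with $n$. This is precisely the technical heart flagged in the introduction as the content of Section \ref{sec:critical-proof}, and I would expect the bulk of the work (and the quantitative error bounds) to live there.
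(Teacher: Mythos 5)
Your part (i) is essentially the paper's argument, phrased as a supermartingale large-deviation bound rather than via an i.i.d.\ sum comparison. In the paper, one introduces the indicator $Y_t = \mathbf{1}[\deg_F(V_t)=\deg_G(V_t)]$ and bounds $p_G(\bn) \le \P[\sum_t Y_t \le \sum_{e\in F} n_e]$ directly via Bernstein; your $S(t)$ supermartingale is the same observation in disguise. Either way, the key point -- that a vertex of full degree in $F$ \emph{must} be allocated inside $F$ regardless of policy -- is correctly identified, and the exponential rate is uniform away from $\cR_G$ because the gap $d(F)/k - \sum_{e\in F}x_e$ is.

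Part (ii) is where the real work is, and your sketch has a genuine gap there. The Lyapunov/steering idea (push $\bN(t)/(n-t)$ toward a fixed interior target, use Azuma to keep the deviation $O(\sqrt{n})$, and conclude $\liminf p_G(\bn)>0$ from a positive-probability endgame) is exactly the paper's Proposition \ref{prop:main-steer}, so your lower-bound scheme is on the right track. But the difficult part is \emph{convergence to a single constant $c_G$}, and here your proposal only gestures at ``a subadditivity or coupling argument.'' There is no obvious subadditive structure here -- $p_G(\bn)$ is indexed by a vector and a ``time remaining,'' not a single scalar over which one could apply Fekete -- and the coupling route is something the paper explicitly flags as an open question (Question 3 in Section \ref{sec:open}), i.e.\ it is \emph{not} carried out. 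The paper's actual mechanism is a bootstrap: define $m_n(\delta)$ and $M_n(\delta)$ as the min and max of $p_G$ over configurations $\delta$-deep in $\cR_G$, steer from a configuration realizing $m_n$ to one realizing $M_{n'}$ (hitting it exactly with probability $\ge c_1>0$ and otherwise landing somewhere with $p_G \ge m_{n'} - \text{err}$), and deduce $M_{n'} - m_{n'} \le c_1^{-1}(m_n - m_{n'}) + c_1^{-1}\,\mathrm{err}$; sending $n\to\infty$ then $n'\to\infty$ forces $M_{n'}-m_{n'}\to 0$ and identifies the common limit. Crucially this uses the \emph{exact-hitting} probability $c_1$ from Proposition \ref{prop:main-steer}, not just concentration, and that exact hit is obtained by steering to within $O(1)$ and then finishing deterministically -- your ``local central limit theorem for the endgame'' is not needed and not what the paper does. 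Finally, you misattribute the hard work to Section \ref{sec:critical-proof}: that section proves the near-critical Theorem \ref{thm:bdry}, whereas the existence of $c_G$ in Theorem \ref{thm:phase-trans-graph}(ii) is fully established in Section \ref{sec:main-proof}.
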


\begin{figure}[htpb]
(a) \hskip-1cm\includegraphics[scale=0.7]{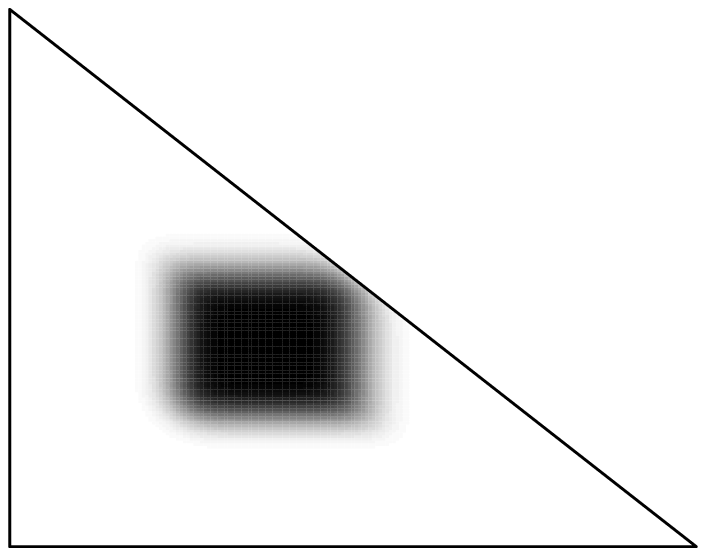} \
(b) \hskip-1cm\includegraphics[scale=0.7]{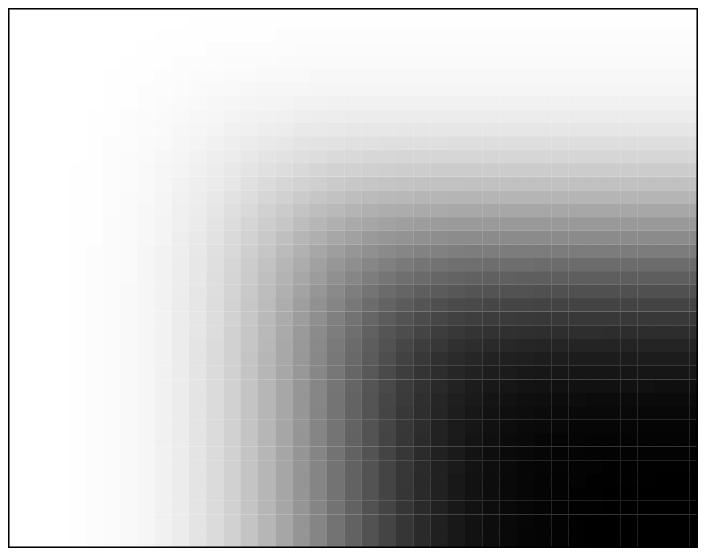}
\caption{%
(a) Image of $p_G(m, 200-m-\ell, \ell)$ when $G$ is a path of length 
three ($k = 4$) and $n = 200$. 
The limit of $p_G$ is a positive constant in the rectangle
$\frac{1}{4} < x = m/n, y=\ell/n < \frac{1}{2}$ (dark region), 
and goes to $0$ when $(x,y)$ is away from the rectangle (white region). 
The maximum of $p_G$ is $\approx 0.2583299$.
(b) Detailed image of $p_G$ near the corner of the critical region
$0.15 \le m/n \le 0.35$, $0.4 \le \ell/n \le 0.6$.}
\label{fig:k4-plot}
\end{figure}

In Section \ref{sec:critical-proof} we obtain bounds on the 
behaviour near $\partial \cR_G$. These shows that the 
`critical window' has width of order $\sqrt{n}$ around
$n \partial \cR_G$. Our bounds in particular imply the following
upper bound on $p_G(\bn)$ in this region. Fix any $\delta > 0$, and let 
\eqnst
{ \overline{M}_n
  = \overline{M}_n(\delta) 
  = \max \left\{ p_G(\bn) : \bn/n \in \cS_G,\, 
    \dist(\bn/n, \partial \cR_G) \le \delta \right\}. }

\begin{theorem}
\label{thm:bdry}
For any $\delta > 0$ we have
$\limsup_{n \to \infty} \overline{M}_n(\delta) \le c_G$.
\end{theorem}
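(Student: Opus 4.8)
The plan is to reduce the boundary estimate to a statement about the associated (deterministic) linear control system together with a large-deviation/martingale argument, exactly in the spirit of how Theorem \ref{thm:phase-trans-graph}(i) is proved. Fix $\delta > 0$ and a sequence $\bn = \bn(n)$ with $\bn/n \in \cS_G$ and $\dist(\bn/n, \partial \cR_G) \le \delta$, achieving (up to $o(1)$) the maximum defining $\overline{M}_n(\delta)$. The key observation is that such $\bn/n$ is at distance at most $\delta$ from a point $\bx^* \in \partial\cR_G$, and a boundary point of $\cR_G$ saturates one of the constraints: there is a set $\es \subsetneq F \subsetneq E$ with $\sum_{e \in F} x^*_e = \tfrac1k d(F)$. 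Hence for the actual starting vector, $\sum_{e\in F} n_e = \tfrac1k d(F)\, n + O(\delta n)$. I would then track the coordinate $S(t) = \sum_{e \in F} N_e(t)$, the total mass on $F$, under an \emph{arbitrary} policy. Each step $t$ the generated vertex $V_t$ lands in the full-$F$-degree set $W_F = \{ v : \deg_F(v) = \deg_G(v)\}$ with probability $d(F)/k$, and on that event the player is \emph{forced} to decrease some edge in $F$, so $S$ decreases by exactly $1$; on the complementary event $S$ may or may not decrease. Therefore $S(t) - S(t-1) \ge -\bone\{V_t \notin W_F\} - \bone\{V_t \in W_F\} $ is not quite the right bookkeeping — rather, the clean statement is a one-sided drift: conditionally on the past, the decrement of $S$ is at least the indicator that $V_t \in W_F$ in expectation only in a weak sense, so instead I would compare $S(t)$ to the deterministic counter $a_F(t) := \#\{ s \le t : V_s \in W_F \}$ and note that $S(t) \ge S(0) - t$ always while the number of steps in which $S$ is \emph{forced} down is exactly $a_F(n)$. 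The useful inequality is: to win we need $S(n) = 0$, which forces the total number of $F$-decrements over the whole game to equal $S(0) = \sum_{e\in F} n_e$; since decrements happen for sure at the $a_F(n)$ steps in $W_F$ and at most once per remaining step, we need $a_F(n) \le S(0)$, i.e.
\eqnst
{ a_F(n) \le \sum_{e \in F} n_e = \frac{1}{k} d(F)\, n + O(\delta n) + O(1). }
By the law of large numbers $a_F(n) \approx \tfrac{d(F)}{k} n$ with Gaussian fluctuations of order $\sqrt n$, so this event has probability bounded by a constant as $\delta \to 0$, but not small — that is the essential difficulty and the reason a naive argument only gives an $O(1)$ bound, not something tending to $c_G$.

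To actually get the sharp constant $c_G$ in the limsup, I would instead run the comparison argument of part (i) but keep track of the \emph{signed} gap $D(t) = S(t) - (S(0) - a_F(t))$, which is a nonnegative process (this is the content of the forced-decrement observation, made rigorous), starting from $D(0) = 0$ and such that winning requires $D(n) = 0$ as well as $a_F(n) \le S(0)$. Since on the boundary $S(0) = \tfrac{d(F)}{k}n + O(\delta n)$, the centred random walk $S(0) - a_F(n) + \tfrac{d(F)}{k}n - S(0)$... — more cleanly: set $Y(t) = a_F(t) - \tfrac{d(F)}{k} t$, a mean-zero martingale with bounded increments and variance growing linearly, and observe that $0 \le D(n) = S(n) - S(0) + a_F(n)$, so $S(n)=0$ forces $a_F(n) = S(0) = \tfrac{d(F)}{k}n + r_n$ with $r_n = O(\delta n)+O(1)$, i.e. $Y(n) = r_n$. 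When $\delta$ is small, $r_n/\sqrt n$ ranges over an interval near $0$, and $\P[Y(n) \approx r_n]$, by the local CLT for $a_F(n)$, is $\Theta(1/\sqrt n) \times (\text{number of admissible values of } a_F(n))$; but admissibility additionally requires $D(s) \ge 0$ for all $s$ and $D(n) = 0$, which is a constraint of the same "area under a random walk bridge" type that governs $c_G$ itself in the interior. So the mechanism is: near $\partial\cR_G$ the process is squeezed against the barrier $D \ge 0$ for the constraint $F$, and the winning probability is at most the probability of a non-negative-excursion event for the walk $D$, which is monotone in how much slack $S(0)$ has; as $\dist(\bn/n,\partial\cR_G) \to 0$ this slack vanishes and the resulting bound is at most the interior value $c_G$.

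Concretely, the steps in order are: (1) extract a near-maximizing sequence and the saturated set $F = F(n)$ (finitely many choices, so pass to a subsequence with $F$ fixed); (2) establish the deterministic one-sided bound $S(t) \le S(0) - a_F(t)$ under any policy — this is where the definition of $d(F)$ and the forced-move observation enter — and hence $D(t) := S(0) - a_F(t) - S(t) \ge 0$, wait, I should orient it so winning needs $D$ to hit a prescribed value; (3) condition on the environment $(V_1,\dots,V_n)$ and note $p_G(\bn) \le \P[\,a_F(n) \le \sum_{e\in F}n_e\,]$ as a first crude bound, then upgrade using the full Lyapunov/martingale control of part (i) applied to $S$ together with the remaining coordinates, to show that on the event $a_F(n) \le \sum_{e\in F}n_e$ the conditional winning probability is itself at most $c_G + o(1)$ by comparison with a starting point in the closed region $\overline{\cR_G}$; (4) let $n\to\infty$ then $\delta\to 0$ and invoke continuity of the relevant quantities. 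The main obstacle is step (3): decoupling the "is $F$'s constraint satisfiable at all" event from the subsequent game on the other coordinates, and showing that conditioning on the former does not inflate the win probability above $c_G$ — equivalently, proving that the boundary constant cannot exceed the interior constant, which intuitively says the barrier only hurts. I expect this to follow from a coupling/monotonicity argument: increasing $n_e$ for $e \in F$ (moving into the interior) can only help, so $\overline M_n(\delta) \le \sup\{ p_G(\bn') : \bn'/n \in \cR_G\} + o(1)$, and the right-hand side converges to $c_G$ by Theorem \ref{thm:phase-trans-graph}(ii) — though establishing that monotonicity rigorously on a general graph, where adding mass to $F$ may require simultaneously adjusting other coordinates to stay in $\cS_G$, is the delicate point and will likely need the quantitative near-critical estimates from Section \ref{sec:critical-proof}.
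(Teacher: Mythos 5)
There is a genuine gap, and it sits exactly where you flag it yourself: step (3), the claim that ``the barrier only hurts,'' i.e.\ that the near-boundary maximum is bounded by the interior maximum via a monotonicity or coupling argument. No such monotonicity is available here. The state space is the simplex $n\cS_G$, so you cannot move a configuration ``into the interior'' by increasing $n_e$ for $e\in F$ without simultaneously decreasing other coordinates, and there is no natural partial order on $n\cS_G$ under which $p_G$ is monotone. A direct coupling of the two games would have to match up the entire sequence of (optimal, state-dependent) decisions, and the optimal strategy is not known explicitly; the paper even emphasizes (Introduction, item 2) that its arguments are designed to work \emph{without} knowledge of the optimal strategy, precisely because such comparison arguments are unavailable. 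So the proposal as written reduces Theorem~\ref{thm:bdry} to an unproved claim which is at least as hard as the theorem itself.

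The paper's actual route does not pass through any such comparison. It partitions configurations near $\partial\cR_G$ at the $\sqrt n$ scale into three regimes $\cB_G^{I}$, $\cB_G^{II}$, $\cB_G^{III}$ according to the minimum of $L^{F,n}(\bn)$ over constraints $F$, and treats each separately. Your discussion of the forced decrements of $S(t)=\sum_{e\in F}N_e(t)$ and the resulting supermartingale is essentially how the paper handles $\cB_G^{I}$ (Proposition~\ref{prop:far-enough}: a stopping-time plus optional-stopping argument on the bounded martingale $p_G(\bN(t))$, giving a bound $\exp(-A^2/8)$). But you correctly observe that this alone only gives an $O(1)$ bound near criticality, and the paper's remedy is not monotonicity: for $\cB_G^{II}$ it builds an explicit chain of steering moves on carefully chosen subgraphs $G^H$ (Propositions~\ref{prop:strengthened} and~\ref{prop:hit-any}, Lemmas~\ref{lem:str-applies} and~\ref{lem:x^*-mod}) to show the win probability cannot exceed $c_G$ by more than $C\exp(-\lambda A^2)$; and for $\cB_G^{III}$ it runs a recursive renormalization on $\sqrt n$-slices using a supermartingale exit-time estimate (Lemma~\ref{lem:super-exit}) and a three-term recursion~\eqref{e:rel-1}--\eqref{e:rel-3} to push the maximum to the boundary slices, where Propositions~\ref{prop:far-enough} and~\ref{prop:in-enough} apply. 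These are the two ideas missing from your proposal, and both are substantial; they are precisely what replaces the monotonicity you could not establish.

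One smaller point: even your ``crude bound'' needs care. The inequality $p_G(\bn)\le\P[a_F(n)\le\sum_{e\in F}n_e]$ is a sound first step (it is implicitly the proof of Theorem~\ref{thm:phase-trans-graph}(i)), but conditioning on that event and then claiming the conditional winning probability is at most $c_G+o(1)$ ``by comparison with a starting point in $\overline{\cR_G}$'' has the same logical shape as step (3): the conditioning changes the law of the subsequent game in a way that is not controlled without the near-critical machinery you are trying to build.
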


Combining Theorems \ref{thm:phase-trans-graph} and \ref{thm:bdry}
we obtain the following corollary.

\begin{corollary}
\label{cor:max-pG}
The configuration $\bn$ that maximizes $p_G(\bn)$ with $n$ fixed,
satisfies $p_G(\bn) = c_G + o(1)$, as $n \to \infty$.
\end{corollary}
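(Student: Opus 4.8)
The plan is to show that $\max\{ p_G(\bn) : \bn \in \N^E,\ \sum_{e \in E} n_e = n\}$, which is attained since there are only finitely many such $\bn$, converges to $c_G$; I will prove separately that the $\liminf$ is $\ge c_G$ and the $\limsup$ is $\le c_G$.

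For the lower bound I would use that, by Lemma \ref{lem:prop-R_G}, $\cR_G$ is a nonempty relatively open subset of $\cS_G$: fix any $\bx \in \cR_G$, and for each $n$ pick an integer vector $\bn^{(n)}$ with $\sum_e n^{(n)}_e = n$ and $\bn^{(n)} = n\bx + O(1)$. Theorem \ref{thm:phase-trans-graph}(ii) then gives $p_G(\bn^{(n)}) \to c_G$, hence $\max_\bn p_G(\bn) \ge c_G - o(1)$.

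For the upper bound, fix $\eps > 0$ and a small $\delta > 0$ and split the possible positions $\bx = \bn / n \in \cS_G$ into three regions. (a) If $\dist(\bx, \partial \cR_G) \le \delta$, then $p_G(\bn) \le \overline{M}_n(\delta)$, and Theorem \ref{thm:bdry} gives $p_G(\bn) \le c_G + \eps$ for all large $n$. (b) If $\dist(\bx, \cR_G) \ge \delta$, then $\bx \in \cI_G$ (using $\cR_G \ne \es$, so that $\cS_G \setminus \overline{\cR_G} = \cI_G$), and Theorem \ref{thm:phase-trans-graph}(i), in the form stating that the decay rate is bounded away from $0$ on sets bounded away from $\cR_G$, gives $p_G(\bn) \le e^{-c(\delta) n} \le \eps$ for large $n$. (c) In the remaining case $\dist(\bx, \partial \cR_G) > \delta$ while $\dist(\bx, \cR_G) < \delta$; since $\cS_G = \cR_G \sqcup \partial \cR_G \sqcup \cI_G$ with $\cR_G, \cI_G$ relatively open and $\cS_G$ convex, a point of $\cI_G$ cannot be within $\delta$ of $\cR_G$ while staying at distance $> \delta$ from $\partial \cR_G$ (the segment joining it to a nearby point of $\cR_G$ would cross $\partial \cR_G$), so in fact $\bx$ lies in the compact set $K_\delta := \{ \by \in \cR_G : \dist(\by, \partial \cR_G) \ge \delta \}$; here I would appeal to the \emph{uniform} version of Theorem \ref{thm:phase-trans-graph}(ii) --- $p_G(\bn) \to c_G$ uniformly over $\bn$ with $\bn/n$ in a fixed compact subset of $\cR_G$ --- to get $p_G(\bn) \le c_G + \eps$ for large $n$. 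Taking $n$ large enough for all three regions, $\max_\bn p_G(\bn) \le c_G + \eps$, and letting $\eps \downarrow 0$ finishes the upper bound; combined with the lower bound this gives the corollary.

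The only step that is not pure bookkeeping is (c): Theorem \ref{thm:phase-trans-graph}(ii) is phrased pointwise in $\bx$, so I need its conclusion to hold uniformly over the compact set $K_\delta \subset \cR_G$. I expect this to come straight out of the quantitative proof of Theorem \ref{thm:phase-trans-graph}(ii), whose explicit error bounds should depend on $\bx$ only through $\dist(\bx, \partial \cR_G)$ and the $O(1)$ constant; failing a direct reading, a standard compactness/subsequence argument (suppose $\bx^{(j)} \in K_\delta$ and $\bn^{(j)}$ with $\bn^{(j)}/n_j \to \bx^\star \in K_\delta$ but $p_G(\bn^{(j)}) \ge c_G + \eps$, and contradict the convergence near $\bx^\star$ using the local uniformity of those estimates) upgrades pointwise to uniform convergence on $K_\delta$. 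That uniformity is the crux; everything else is routine.
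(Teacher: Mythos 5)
Your proof is correct and matches the paper's intended one-line argument (``Combining Theorems \ref{thm:phase-trans-graph} and \ref{thm:bdry}''), with the regional decomposition spelled out in the natural way. The uniform convergence over compact subsets of $\cR_G$ that you flagged as the crux of step (c) is not actually a gap: it is exactly what the paper's proof of Theorem \ref{thm:phase-trans-graph}(ii) establishes via \eqref{e:limits}, which shows that $M_n(\delta)$ and $m_n(\delta)$ both converge to $c_G$ for every fixed $\delta>0$, so no separate compactness or subsequence argument is needed even though the theorem statement itself is phrased pointwise.
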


Theorems \ref{thm:phase-trans-graph} and \ref{thm:bdry} do not rule 
out the possibility that $p_G(\bn)$ is maximized near the critical surface,
at a distance that is $o(n)$. But of course we expect that 
the location of the maximum, when rescaled by $1/n$, converges to a point in the 
interior of $\cR_G$. It is also plausible that the location of this point
can be characterized in terms of large deviation rates for events 
of the form `the gambler runs out of space on the edges in $F$', 
that is:
\eqnst
{ \left\{ \sum_{v : \deg_F(v) = \deg_G(v)} \sum_{t=1}^n \mathbf{1}_{V_t = v}
     > \sum_{e \in F} n_e \right\}, \quad
     \es \subsetneq F \subsetneq E. }
We state an explicit conjecture for a path of length $k-1$, where this
is easiest to formulate.
Let 
\eqnst
{ a_*(j;k)
  = \frac{\log \left( \frac{k-j-1}{k-j} \right)}{\log \left( 
    \frac{j \, (k-j-1)}{(j+1) \, (k - j)} \right)}, \quad
    1 \le j \le k-2 \qquad\quad
    a_*(0;k) = 0 \qquad\quad
    a_*(k-1;k) = 1. }
Let $\bn^{\max} = (n^{\max}_j : j = 1, \dots, k-1)$ denote a point in $n \, \cS_G$ where
$p_G(\bn)$ is maximized, $n \ge 1$.

\begin{conjecture}
\label{conj:path}
Let $k \ge 3$. Then for $1 \le j \le k-2$ we have
\eqnst
{ \lim_{n \to \infty} \frac{1}{n} \sum_{\ell=1}^j n^{\max}_j
  = a_*(j;k). }
\end{conjecture}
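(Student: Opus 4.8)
The conjecture should follow from a sharp description of the correction $c_G - p_G(\bn)$ near the interior of $\cR_G$. For a path on vertices $1, \dots, k$ with edges $e_j = \{j, j+1\}$ it is convenient to pass to the cumulative coordinates $s_j = s_j(\bn) = \sum_{\ell = 1}^j n_\ell$, $1 \le j \le k-2$; one checks that $\cR_G$ is the open box $\{ j/k < s_j/n < (j+1)/k : 1 \le j \le k-2 \}$ and, crucially, that the $2(k-2)$ defining inequalities decouple: the prefix $\{e_1, \dots, e_j\}$ receives a \emph{forced} decrement exactly when a vertex in $\{1, \dots, j\}$ is drawn, an event of probability $j/k$ \emph{regardless of the player's choices}, so winning from $\bn$ forces $\mathrm{Bin}(n, j/k) \le s_j$, and symmetrically $\mathrm{Bin}(n, (k-j-1)/k) \le n - s_j$. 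By Theorem \ref{thm:phase-trans-graph}(ii) the winning probability converges to the same constant $c_G$ throughout the interior, so the location of the maximiser is determined by the exponentially small corrections, which are produced precisely by these near-wall large deviations.

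Write $H(a;p) = a \log\frac{a}{p} + (1-a)\log\frac{1-a}{1-p}$ for the Bernoulli relative entropy, with the convention $H(a;p) := 0$ for $a \le p$. The key step is to establish, uniformly for $\bn/n$ in compact subsets of $\cR_G$, a decomposition
\begin{equation*}
  c_G - p_G(\bn) = c_G \sum_{j=1}^{k-2} \Big( e^{-n H(s_j/n;\, j/k) + o(n)} + e^{-n H(1 - s_j/n;\, (k-j-1)/k) + o(n)} \Big),
\end{equation*}
with the $o(n)$ terms uniform. The intuition is that the only ways to lose are the $2(k-2)$ events ``the forced demand overshoots its allotment on the prefix, respectively the suffix, of cut $j$'' together with a single strategy-dependent ``universal'' mechanism, which accounts for the $\Theta(1)$ loss $1 - c_G$ already present deep in the interior; these events are asymptotically independent, so the corresponding loss probabilities add. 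The upper bound on the loss would come from exhibiting the near-optimal strategy that routes each drawn vertex so as never to empty a cut prematurely and checking, via Chernoff bounds plus a reflection estimate for the running maxima of the binomial walks $t \mapsto \sum_{i \le t} \mathbf{1}[V_i \in \{1,\dots,j\}]$, that its only failure modes are the stated ones; the matching lower bound on each summand uses the strategy-independence of the forced demand of a single cut, together with the claim that, conditionally on no wall being overshot, the winning probability is still $c_G + o(1)$, a uniform version of the argument behind Theorem \ref{thm:phase-trans-graph}(ii).

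Granting the decomposition, the conjecture is a short optimisation. Maximising $p_G(\bn)$ over configurations with $\sum_e n_e = n$ means minimising the displayed sum; since the $j$-th summand-pair depends on $\bn$ only through $s_j$, and $0 \le s_1 \le \dots \le s_{k-2} \le n$, one minimises each pair separately. On the relevant range $a = s_j/n \in (j/k, (j+1)/k)$ the first exponential is increasing and the second decreasing, so the pair has a unique minimiser at the $a$ where, to leading exponential order, $H(a; j/k) = H(1-a; (k-j-1)/k)$; in this equation the $a\log a$ and $(1-a)\log(1-a)$ contributions cancel, leaving $a \log\frac{j+1}{j} = (1-a)\log\frac{k-j}{k-j-1}$, whose solution lies strictly in $(j/k,(j+1)/k)$ (at the endpoints the two rates are $0$ and a positive quantity, in opposite orders) and is exactly $a_*(j;k)$. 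From the same equation $\frac{a_*(j;k)}{1 - a_*(j;k)} = \log\frac{k-j}{k-j-1}\big/\log\frac{j+1}{j}$ is strictly increasing in $j$ (numerator increasing, denominator decreasing), so the points $a_*(1;k) < \dots < a_*(k-2;k)$ are already ordered with gaps of order $n$ after multiplication by $n$; hence for large $n$ the ordering constraints do not bind, and any maximiser $\bn^{\max}$ must satisfy $s_j(\bn^{\max})/n \to a_*(j;k)$, which is the assertion of the conjecture. Uniqueness of the per-cut minimiser is what upgrades convergence of the rate to convergence of the scaled configuration: a subsequential limit $s_j/n \to a \ne a_*(j;k)$ would let one strictly decrease the total loss by resetting $s_j$ to $a_*(j;k) n$, a move that stays in $\cR_G$ because the set of $a$ for which the $j$-th summand-pair is within a fixed factor of its minimum lies strictly between the walls $j/k$ and $(j+1)/k$, hence strictly between the corresponding sets for cuts $j-1$ and $j+1$.

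The hard part is the sharp decomposition of the first step. The optimal strategy is not available in closed form, so the lower bounds on the loss must hold against every strategy — this is where the strategy-independence of a single cut's forced demand is essential — while the matching upper bound requires controlling a concrete near-optimal strategy right up to $\partial \cR_G$, i.e.\ strengthening the near-critical estimates of Section \ref{sec:critical-proof} so as to pin down the exact exponential rate, and in particular showing that conditioning on the absence of all wall overshoots leaves the winning probability equal to $c_G + o(1)$, uniformly. The sub-exponential prefactors, which one would need for a second-order description of $\bn^{\max}$ but not for the conjecture as stated, would in turn demand a local central limit refinement of these bounds.
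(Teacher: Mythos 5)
The statement you are proving is explicitly a \emph{conjecture}: the paper offers no proof, only a one-sentence heuristic (``this number $a$ can be obtained by equating the large deviation rates of the two events''). So there is no paper proof to compare against, and nothing you write can be ``the same approach as the paper's proof.'' What you have done is to flesh out the paper's heuristic into a proof plan, identify the one technical ingredient it would need, and honestly flag that ingredient as unestablished.

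Within that scope, your reconstruction is sound. You correctly read off, for the path, that $d(\{e_1,\dots,e_j\}) = j$, so the prefix demand is strategy-independent and $\mathrm{Bin}(n,j/k)$-distributed, and symmetrically for the suffix. Your algebra checks out: setting $H(a;j/k) = H(1-a;(k-j-1)/k)$, the $a\log a$ and $(1-a)\log(1-a)$ terms cancel, leaving $a\log\frac{j+1}{j} = (1-a)\log\frac{k-j}{k-j-1}$, which does solve to the paper's $a_*(j;k)$. Your monotonicity claim $a_*(1;k) < \dots < a_*(k-2;k)$ is also correct (the numerator $\log\frac{k-j}{k-j-1}$ increases in $j$ and the denominator $\log\frac{j+1}{j}$ decreases), which is what lets you decouple the per-cut optimisations.

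The genuine gap is the one you name yourself: the displayed decomposition $c_G - p_G(\bn) = c_G \sum_j(\,\cdot\,)$ with uniform $o(n)$ errors is not proved, and proving it is the entire content of the conjecture. Two further cautions. First, that particular additive form (with prefactor $c_G$, and loss events treated as asymptotically independent) is itself a guess and is stronger than what you need; to locate the maximiser to leading order it would suffice to show that the exponential rate of $c_G - p_G(\bn)$ equals $\min_j\min\bigl\{H(s_j/n;j/k),\,H(1-s_j/n;(k-j-1)/k)\bigr\}$, a min-max formulation closer to the paper's own phrasing and likely easier to attack (lower bound on the loss from a single strategy-independent cut; upper bound from a concrete near-optimal strategy). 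Second, the lower bound on the loss via a single binomial overshoot only shows $c_G - p_G(\bn) \ge$ (something of rate $H$); you also need that no \emph{other} mechanism loses at a strictly smaller rate somewhere in $\cR_G$, which is part of what the conditional statement ``conditioning on no wall overshoot leaves win probability $c_G + o(1)$'' would have to encode, and which the paper's Section \ref{sec:critical-proof} estimates do not currently deliver at the exponential scale. So: plan consistent with the paper's intent and its arithmetic correct, but not a proof, for reasons you have accurately diagnosed.
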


The number $a_*(j;k)$ is obtained as the unique point 
$a \in \left( \frac{j}{k}, \frac{j+1}{k} \right)$, for which the 
`cheaper' of the two large deviation events
\eqnst
{ \left\{ \sum_{v=1}^j \sum_{t=1}^n \mathbf{1}_{V_t = v}
     >  a \, n \right\} \quad \text{ and } \quad
  \left\{ \sum_{v=j+2}^k \sum_{t=1}^n \mathbf{1}_{V_t = v}
     >  (1-a) \, n \right\} }
is as `expensive' as possible. (This number $a$ can be obtained by equating
the large deviation rates of the two events.) Each $a_*(j;k)$ marks
out a linear submanifold of $\cS_G$, and the location of the optimum
is their intersection. We expect that a similar characterization 
holds for any connected graph $G$.

The structure of the paper is as follows.
The proof of Theorem \ref{thm:phase-trans-graph} is given in 
Section \ref{sec:main-proof}. We study the behaviour near $\partial \cR_G$ 
in Section \ref{sec:critical-proof}, and deduce Theorem \ref{thm:bdry}.
We stress however, that our analysis provides a much more refined picture 
than Theorem \ref{thm:bdry}; see Propositions \ref{prop:far-enough},
\ref{prop:in-enough} and \ref{prop:near-critical}, and their proof. The estimates
in these propositions suggest Gaussian behaviour near $\partial \cR_G$.
We conclude with some further questions in Section \ref{sec:open}.

\section{Proof of the phase transition}
\label{sec:main-proof}

The next section collects some preliminaries and useful notation.

\subsection{Basic properties of $\cR_G$}
\label{ssec:prelim}

It will be convenient to have the version of $\cR_G$ in which the 
inequalities are not strict:
\eqnst
{ \cK_G
  = \left\{ \bx \in \cS_G : \text{for all $F \subset E$
     we have $\sum_{e \in F} x_e \ge \frac{1}{k} d(F)$} \right\}. }
We denote by $H_F$ the hyperplanes appearing in these inequalities:
\eqnst
{ H_F
  = \left\{ \bx \in \R^E : \sum_{e \in F} x_e = \frac{1}{k} d(F) \right\},
    \es \not= F \subset E. }
In particular, $\cS_G$, $\cR_G$, $\cI_G$ and $\cK_G$ are all subsets of $H_E$.

\begin{lemma}
\label{lem:prop-R_G} \ \\
(i) The sets $\cK_G$ and $\cR_G$ are convex with a non-empty interior
relative to $H_E$.\\
(ii) $\cK_G = \overline{\cR_G}$ (the closure of $\cR_G$ in $H_E$).
\end{lemma}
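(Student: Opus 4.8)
The plan is to prove both parts by elementary convexity arguments; the only genuinely substantive step is exhibiting an interior point of $\cR_G$, where connectedness of $G$ enters.

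For part~(i), convexity is immediate: $\cK_G$ is the intersection of the convex simplex $\cS_G$ with the closed half-spaces $\{\bx : \sum_{e \in F} x_e \ge \tfrac1k d(F)\}$, $\es \subsetneq F \subsetneq E$, while $\cR_G$ is the intersection of $\cS_G$ with the corresponding open half-spaces; intersections of convex sets are convex. For the non-empty interior, observe that inside the affine hyperplane $H_E$ the set
\eqnst{ U = \Bigl\{ \bx \in H_E : x_e > 0 \text{ for all } e \in E, \
  \textstyle\sum_{e \in F} x_e > \tfrac1k d(F) \text{ for all } \es \subsetneq F \subsetneq E \Bigr\} }
is relatively open (a finite intersection of open half-spaces with $H_E$) and is contained in $\cR_G$, since $x_e > 0$ forces $x_e \ge 0$ and $\bx \in H_E$ forces $\sum_e x_e = 1$. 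As $\cR_G \subseteq \cK_G$, it thus suffices to check $U \neq \es$.

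I would take $\bx^0$ to be the ``uniform split'' that distributes each vertex equally among its incident edges: for $e = \{u,v\} \in E$ put $x^0_e = \tfrac1k\bigl(\tfrac{1}{\deg_G(u)} + \tfrac{1}{\deg_G(v)}\bigr)$. Since $G$ is connected with $|E| \ge 2$ every vertex has positive degree, so $x^0_e > 0$, and $\sum_{e \in E} x^0_e = \tfrac1k \sum_{v \in V} \deg_G(v)/\deg_G(v) = 1$, hence $\bx^0 \in H_E$. For $\es \subsetneq F \subsetneq E$, summing the contribution of each vertex gives $\sum_{e \in F} x^0_e = \tfrac1k \sum_{v \in V} \deg_F(v)/\deg_G(v)$; separating off the vertices with $\deg_F(v) = \deg_G(v)$ (each contributing $1$) and noting those with $\deg_F(v)=0$ contribute $0$, this equals $\tfrac1k\bigl(d(F) + \sum_{v:\, 0 < \deg_F(v) < \deg_G(v)} \deg_F(v)/\deg_G(v)\bigr)$. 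So $\bx^0 \in U$ provided some vertex $v$ has $0 < \deg_F(v) < \deg_G(v)$, i.e.\ is incident both to an edge of $F$ and to an edge of $E \setminus F$. If no such vertex existed, the set of vertices meeting $F$ and the set meeting $E \setminus F$ would be disjoint, both non-empty, and would exhaust $V$ (a connected graph with an edge has no isolated vertices), while every edge would lie entirely in one class or the other, contradicting connectedness of $G$. This connectedness argument is the one step requiring any thought; everything else is routine.

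For part~(ii), $\overline{\cR_G} \subseteq \cK_G$ because $\cK_G$ is a finite intersection of closed sets containing $\cR_G$. For the reverse inclusion I would use the line-segment trick: given $\bx \in \cK_G$ and the point $\bx^0 \in U \subseteq \cR_G$ from part~(i), set $\bx_t = (1-t)\bx + t\bx^0$, $t \in (0,1]$. Then $\bx_t \in \cS_G$ by convexity of $\cS_G$, and for every $\es \subsetneq F \subsetneq E$,
\eqnst{ \sum_{e \in F}\bigl( (1-t) x_e + t x^0_e \bigr)
  = (1-t)\sum_{e \in F} x_e + t \sum_{e \in F} x^0_e
  \ge (1-t)\tfrac1k d(F) + t \sum_{e \in F} x^0_e
  > \tfrac1k d(F), }
using $\bx \in \cK_G$, $\bx^0 \in \cR_G$ and $t > 0$. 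Hence $\bx_t \in \cR_G$, and $\bx_t \to \bx$ as $t \downarrow 0$, so $\bx \in \overline{\cR_G}$. Therefore $\cK_G = \overline{\cR_G}$.
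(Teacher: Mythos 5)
Your proposal is correct and follows essentially the same path as the paper: the same candidate interior point (the paper's $\bx^*$ is exactly your $\bx^0$, written as a sum over the two endpoints of $e$), the same vertex-splitting/connectedness argument to verify the strict inequalities, and the same line-segment argument for part (ii) (parametrized in the opposite direction). You spell out the connectedness step in a bit more detail and handle the relative openness of $\cR_G$ slightly more carefully via the auxiliary set $U$, but these are expository rather than substantive differences.
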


\begin{proof}
(i) As intersections of halfspaces with $H_E$, both $\cK_G$ and $\cR_G$ are convex. 
Also, since the halfspaces defining $\cR_G$ (resp.~$\cK_G$) are open (resp.~closed), 
$\cR_G$ (resp.~$\cK_G$) is a relatively open (resp.~closed) subset of $H_E$.
The containment $\cR_G \subset \cK_G$ is immediate from the definitions.
To show that $\cR_G$ has non-empty interior, we check that the vector
\eqn{e:x^*}
{ \bx^*
  = (x^*_e : e \in E), \quad
  x^*_e 
  = \frac{1}{k} \sum_{\substack{v \in V \\ v \sim e}} \frac{1}{\deg(v)}, 
    \quad e \in E, }
belongs to $\cR_G$. First, $\bx^* \in H_E$ can be seen by summing the
formula for $x^*_e$ over $e \in E$ and exchanging the two sums. It is also
immediate that $x^*_e > 0$, and therefore $\bx^* \in \cS_G$.
Now fix any $\es \subsetneq F \subsetneq E$. Since
$G$ is connected, there exists a vertex $v \in V$ such that 
$0 < \deg_F(v) < \deg_G(v)$. Therefore,
\eqnsplst
{ \sum_{e \in F} x^*_e 
  &= \sum_{e \in F} \frac{1}{k} \sum_{\substack{v \in V \\ v \sim e}} \frac{1}{\deg(v)}
  = \frac{1}{k} \sum_{\substack{v \in V \\ \deg_F(v) = \deg_G(v)}} 
    \sum_{\substack{e \in F \\ e \sim v}} \frac{1}{\deg_G(v)} 
    + \frac{1}{k} \sum_{\substack{v \in V \\ \deg_F(v) < \deg_G(v)}} 
    \sum_{\substack{e \in F \\ e \sim v}} \frac{1}{\deg_G(v)} \\
  &> \frac{1}{k} \sum_{\substack{v \in V \\ \deg_F(v) = \deg_G(v)}} 1 
  = \frac{d(F)}{k}. }
This shows that $\bx^* \in \cR_G$, and since $\cR_G$ is open in $H_E$,
$\bX^*$ is an interior point. The containment $\cR_G \subset \cK_G$ implies
that $\bx^*$ is also an interior point of $\cK_G$.
 
(ii) Since $\cK_G$ is closed, we have $\overline{\cR_G} \subset \cK_G$.
Therefore, it is enough to show that $\cK_G \setminus \cR_G \subset \overline{\cR_G}$.
Let $\bx \in \cK_G \setminus \cR_G$. Let $\bx(t) = t \bx + (1-t) \bx^*$.
Convexity of $\cK_G$ implies that $\bx(t) \in \cK_G$ for all $0 \le t \le 1$.
Moreover, since the expressions $\sum_{e \in F} x_e(t)$ are monotone linear functions
of $t$, and $\sum_{e \in F} x_e(0) > d(F)/k$, and $\sum_{e \in F} x_e(1) \ge d(F)/k$, 
we must have the inequality $\sum_{e \in F} x_e(t) > \frac{1}{k} d(F)$ 
for all $0 \le t < 1$. This implies that $\bx(t) \in \cR_G$ for 
$0 \le t < 1$, and hence $\bx \in \overline{\cR_G}$, as required.
\end{proof}

The optimality equation implies that the optimal deterministic strategy is 
also optimal among randomized strategies. The next lemma states a 
connection between elements of $\cK_G$ and possible moves in a
randomized strategy. In its statement, we think of $q^{(v)}(e)$ as the
probability of assigning vertex $v$ to the edge $e$ in such a move.

\begin{lemma}
\label{lem:interpret}
We have $\bx \in \cK_G$ if and only if there exists a
collection $\{ q^{(v)}(e) : v \in V,\, e \in E \}$
of non-negative numbers such that:\\
(i) $\sum_{e \in E} q^{(v)}(e) = 1$ for all $v \in V$; \\
(ii) $q^{(v)}(e) = 0$ if $e$ is not incident with $v$; \\
(iii) $\frac{1}{k} \sum_{v \in V} q^{(v)}(e) = x_e$ for all $e \in E$.
\end{lemma}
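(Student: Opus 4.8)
The statement is an equivalence between membership in the polytope $\cK_G$ and the existence of a doubly-stochastic-type array, so the natural approach is a Farkas-lemma / LP-duality (or max-flow/min-cut) argument. The plan is to phrase the existence of $\{q^{(v)}(e)\}$ as a feasibility question for a transportation-type polytope and read off the obstruction.

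\emph{Easy direction ($\Leftarrow$).} Suppose such numbers exist. Fix $\es \subsetneq F \subsetneq E$ (the cases $F = \es$ and $F = E$ being trivial, the latter using (i) summed over $v$). For any $v$ with $\deg_F(v) = \deg_G(v)$, conditions (i) and (ii) force $\sum_{e \in F} q^{(v)}(e) = \sum_{e \in E} q^{(v)}(e) = 1$, since every edge incident with $v$ lies in $F$. Hence by (iii),
\[
  \sum_{e \in F} x_e
  = \frac{1}{k} \sum_{v \in V} \sum_{e \in F} q^{(v)}(e)
  \ge \frac{1}{k} \sum_{\substack{v \in V \\ \deg_F(v) = \deg_G(v)}} 1
  = \frac{d(F)}{k},
\]
so $\bx \in \cK_G$.

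\emph{Hard direction ($\Rightarrow$).} Here I would set up a bipartite network: a source $s$, one node per vertex $v \in V$ (with capacity $1$ on the arc $s \to v$), one node per edge $e \in E$, an arc $v \to e$ of capacity $+\infty$ whenever $e \sim v$, and an arc $e \to t$ to the sink of capacity $k x_e$. A collection $\{q^{(v)}(e)\}$ satisfying (i)--(iii) is precisely an integral-free flow of value $k$ saturating all source arcs (note $\sum_e k x_e = k$ since $\bx \in \cS_G$, so saturating the source arcs is equivalent to saturating the sink arcs). By the max-flow/min-cut theorem, such a flow exists iff every $s$-$t$ cut has capacity $\ge k$. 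A finite-capacity cut cannot use any $v \to e$ arc, so it is determined by a subset $U \subset V$ of vertices on the sink side and the set $F = \{e \in E : e \sim v \text{ for some } v \in U\}$ of edges it forces to the sink side; its capacity is $|V \setminus U| + \sum_{e \in F} k x_e = (k - |U|) + k\sum_{e \in F} x_e$. Minimizing, one should take $U$ to be \emph{all} vertices all of whose incident edges lie in $F$, i.e.\ $|U| = d(F)$ for the optimal choice, giving cut capacity $k - d(F) + k \sum_{e \in F} x_e$. The min-cut condition is therefore exactly $\sum_{e \in F} x_e \ge d(F)/k$ for all $F \subset E$, which is the definition of $\cK_G$.

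\emph{Main obstacle.} The one point requiring care is the combinatorial optimization of the cut: given that a cut is specified by a vertex set $U$ on the sink side, one must argue that the edge set $F$ it forces satisfies $\deg_F(v) = \deg_G(v)$ for every $v \in U$ (else the $v \to e$ arc for some $e \sim v$, $e \notin F$, crosses the cut with infinite capacity), and conversely that enlarging $U$ to include \emph{every} such full-degree vertex only decreases the cut capacity. This pins the relevant cuts to the family $\{k - d(F) + k\sum_{e\in F} x_e : F \subset E\}$ and closes the argument. Alternatively, if one prefers to avoid network-flow language, the same conclusion follows from Farkas' lemma applied to the linear system (i)--(iii) in the variables $q^{(v)}(e) \ge 0$; the separating functional decomposes over vertices and yields, after optimizing the vertex contributions, precisely the quantities $d(F)$. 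Either route is routine once the cut/dual structure is identified; I would present the max-flow/min-cut version as it makes the role of $d(F)$ most transparent.
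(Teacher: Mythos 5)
Your proof is essentially the paper's proof: the paper also invokes max-flow/min-cut on the bipartite network $s \to V \to E \to t$ (with capacities $1/k$, $2$, $x_e$ instead of your scaled $1$, $\infty$, $kx_e$, an immaterial normalization), identifies finite-capacity cuts as parameterized by a choice of $F \subset E$, and reads off the defining inequalities of $\cK_G$; the only cosmetic difference is that you handle the easy direction by direct computation while the paper routes it through the flow too. One small slip to fix: you repeatedly say $U$ lies on the \emph{sink} side, but your capacity formula $|V\setminus U| + k\sum_{e\in F} x_e$ and the conclusion $\deg_F(v)=\deg_G(v)$ for $v\in U$ are correct only with $U$ on the \emph{source} side (so that the $s\to v$ arcs crossing the cut are those with $v\notin U$, and the infinite $v\to e$ arcs from $v\in U$ force its incident edge-nodes to the source side).
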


\begin{proof}
We deduce the statement from the Max-Flow-Min-Cut Theorem \cite[Theorem III.1]{Bbook}. 
Define an auxilliary
directed graph $G'$ as follows. Replace each edge $\{ v, w \}$ 
of $G$ by two directed edges $( v, u_e )$ and $( w, u_e )$, introducing 
the new vertex $u_e$ for each $e \in E$. Also add new vertices 
$s$ and $t$. Add a directed edge $( s, v )$ for each $v \in V$ and
a directed edge $( u_e, t )$ for each $e \in E$. Thus $G'$ has
$|V| + |E| + 2$ vertices and $2 |E| + |V| + |E|$ edges.

Consider flows of strength $1$ from $s$ to $t$ in $G'$, where
we assign capacity $1/k$ to each edge $(s, v)$, $v \in V$, 
capacity $2$ to each $(v, u_e)$ and capacity $x_e$ to 
each $(u_e, t)$.

Suppose $q^{(v)}(e)$ satisfy (i)--(iii). Define a flow by letting
$1/k$ flow on each $(s,v)$, $q^{(v)}(e)/k$ flow on each $(v, u_e)$, 
and $x_e$ flow on each $(u_e,t)$. This flow satisfies
the capacity constraints, and it is a maximal flow, since
$\{ (s, v) : v \in V \}$ is a cut with value $1$. Therefore any
other other cut must have value at least $1$. 
Given $\es \subset F \subset E$, consider the cut
\eqn{e:cut}
{ \{ (s, v) : \deg_F(v) < \deg_G(v) \}
     \cup \{ (u_e, t) : e \in F \}. }
with value
\eqnst
{ \frac{k - d(F)}{k} + \sum_{e \in F} x_e
  = 1 - \frac{d(F)}{k} + \sum_{e \in F} x_e 
  \ge 1. }
This implies that $\bx \in \cK_G$.

For the converse, suppose that $\bx \in \cK_G$, and 
consider a maximal flow on $G'$. The conditions in the 
definition of $\cK_G$ imply that all cuts of the form 
\eqref{e:cut} have value $\ge 1$, and the cut corresponding 
to $F = E$ has value $1$. It is easy to check 
that any minimal cut is necessarily of this form, and therefore
the maximal flow is $1$. Letting $q^{(v)}(e)$ be $k$-times the
amount flowing on $(v, u_e)$ we obtain a collection satisfying
(i)--(iii).
\end{proof}

Basic for Theorem \ref{thm:phase-trans-graph} is the following 
computation. Suppose that our current state is 
$\bn = n \bx$, $\bx \in \cS_G$. 
Let $\{ q^{(v)}(e) \}_{v \in V, e \in E}$ be a set of 
probabilities representing a randomized move (that is:
$q^{(v)}_e$ is the probability that edge $e$ will be used,
conditional on the event that vertex $v$ has been drawn). Let 
$\bN' = (n-1) \bX'$ be the random outcome of the move. 
Let $y_e = \frac{1}{k} \sum_{v \in V} q^{(v)}(e)$. We have
\eqnspl{e:drift1}
{ \E \bX' 
  &= \frac{1}{n-1} \E \bN'
  = \frac{1}{n-1} \left( \bn - \sum_{e \in E} y_e \bone^e \right) 
  = \frac{n}{n-1} \bx - \frac{1}{n-1} \by
  = \bx + \frac{1}{n-1} ( \bx - \by ). }
If $\bx \in \cR_G$, then due to Lemma \ref{lem:interpret} 
it is possible to choose $\by \in \cK_G$ in such a 
way that the average displacement 
points in any desired direction. 
On the other hand, if $\bx \in \cI_G$, convexity of $\cK_G$ implies that 
the process will always move away from $\cR_G$ on average.

The above observations are also reflected in the following deterministic
controlled differential equation:
\eqnst
{ \frac{d \bx}{dt}
  = \bx - \bu(t), \quad \text{where the control $\bu$ satisfies 
    $\bu(t) \in \cK_G$ for all $t \ge 0$.} }
It is easy to see (for example using as Lyapunov function the distance 
from $H_E \cap H_F$ for suitable $F$) that:\\
(i) If $\bx(0) \not\in \cK_G$, then for any control $\bu$ we have
$\bx(t) \not\in \cK_G$ for all $t \ge 0$;\\
(ii) If $\bx(0) \in \cR_G$, then for any $\bx' \in \cR_G$ there exists a 
control $\bu$ such that $\lim_{t \to \infty} \bx(t) = \bx'$.

Let us introduce some further notation.
Throughout we write $\| \bw \|_1 = \sum_{e \in E} |w_e|$ and
$| \bw | = \sqrt{\sum_{e \in E} |w_e|^2}$ for any 
vector $\bw = (w_e : e \in E) \in \R^E$. For $\bw \in \R^E$ and
$A \subset \R^E$ we write $\dist(\bw, A) = \inf_{\by \in A} |\bw - \by|$.
We will write $\langle \cdot, \cdot \rangle$ for the Euclidean
scalar product.

For each $\es \subsetneq F \subsetneq E$ we fix a point $\bz^F \in \cK_G$
such that $\sum_{e \in F} z^F_e = \frac{d(F)}{k}$. Let $\bu^F$ be the
unit vector of the form 
\eqnst
{ u^F_e
  = \begin{cases}
    a_F & \text{if $e \in F$;}\\
    -b_F & \text{if $e \in E \setminus F$,}
    \end{cases} } 
with $a_F, b_F > 0$, and such that 
$\sum_{e \in E} u^F_e = 0$. For all $\bw \in \cK_G$ we have
$\langle \bw - \bz^F, \bu^F \rangle \ge 0$. We will often use linear
functions of the form:
\eqnsplst
{ L^{F,n}(\bn)
  = \langle \bn - n \bz^F, \bu^F \rangle
  = \sum_{e \in E} (n_e - n z^F_e) u^F_e. }
The last expression can be rewritten as follows:
\eqnsplst
{ &\sum_{e \in E} (n_e - n z^F_e) u^F_e
  = a_F \sum_{e \in F} n_e + (-b_F) \left( n - \sum_{e \in F} n_e \right)
    - n a_F \sum_{e \in F} z^F_e - n (-b_F) \left( 1 - \sum_{e \in F} z^F_e \right) \\
  &\qquad = (a_F + b_F) \sum_{e \in F} n_e - n b_F - n (a_F + b_F) \sum_{e \in F} z^F_e + n b_F 
  = (a_F + b_F) \left( \sum_{e \in F} n_e - n \frac{d(F)}{k} \right). }
We define $\kappa = \kappa(G) = \min \{ (a_F + b_F) : \es \subsetneq F \subsetneq E \} > 0$.
We will need the following lemma.

\begin{lemma}
\label{lem:equiv-metric}
There exist constants $b = b(G) > 0$ and $B = B(G)$ such that for all $\bw \in \cK_G$ we have
\eqn{e:equiv-metric}
{ b \, \dist( \bw, \partial \cR_G )
  \le \min_{\es \subsetneq F \subsetneq E} \left\{ \sum_{e \in F} w_e - \frac{d(F)}{k} \right\}
  \le B \, \dist( \bw, \partial \cR_G ). }
We also have 
\eqn{e:equiv-metric2}
{ \frac{1}{2} L^{F,n}(n \bw)
  \le n \left( \sum_{e \in F} w_e - \frac{d(F)}{k} \right)
  \le \frac{1}{\kappa} L^{F,n}(n \bw), \quad n \ge 1. }
\end{lemma}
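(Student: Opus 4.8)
The plan is to treat the two displayed inequalities separately, the second being essentially a restatement of the computation already carried out just before the lemma. For \eqref{e:equiv-metric2}, recall that we showed $L^{F,n}(\bn) = (a_F+b_F)\bigl(\sum_{e\in F}n_e - n\,d(F)/k\bigr)$. Substituting $\bn = n\bw$ gives $L^{F,n}(n\bw) = (a_F+b_F)\, n\bigl(\sum_{e\in F}w_e - d(F)/k\bigr)$. Since $\bw\in\cK_G$, the bracketed quantity is $\ge 0$, and then the bounds follow immediately from $\kappa \le a_F+b_F$ on one side, together with the trivial bound $a_F+b_F \le \sqrt{2}\le 2$ coming from the fact that $\bu^F$ is a unit vector (so $|E|\,\min(a_F,b_F)^2 \le |F|a_F^2 + |E\setminus F|b_F^2 = 1$, hence $a_F,b_F\le 1$ and $a_F+b_F\le 2$). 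This is routine and I would dispatch it in two lines.

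For \eqref{e:equiv-metric}, write $m(\bw) = \min_{\es\subsetneq F\subsetneq E}\{\sum_{e\in F}w_e - d(F)/k\}$, a concave, piecewise-linear, continuous function on the (compact, convex) polytope $\cK_G$. The key structural facts are: $\cK_G = \overline{\cR_G}$ by Lemma \ref{lem:prop-R_G}(ii); $m(\bw) > 0$ on the relative interior $\cR_G$ and $m(\bw) = 0$ exactly on $\cK_G\setminus\cR_G = \partial\cR_G$ (the relative boundary of $\cK_G$ in $H_E$). For the upper bound $m(\bw)\le B\,\dist(\bw,\partial\cR_G)$: fix $F$ achieving the minimum at $\bw$; the hyperplane $H_F$ contains a point of $\partial\cR_G$ (indeed $\bz^F\in H_F\cap\cK_G\subset\partial\cR_G$), and $m(\bw)$ equals (a constant times) the signed distance of $\bw$ from $H_F$ along $\bu^F$, namely $m(\bw) = \langle \bw - \bz^F,\bu^F\rangle$. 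Since $\bu^F$ is a unit vector, $\langle\bw-\bz^F,\bu^F\rangle \le |\bw-\bz^F|$, but more to the point the orthogonal projection $\bw'$ of $\bw$ onto $H_F\cap H_E$ satisfies $|\bw-\bw'| = \langle\bw-\bz^F,\bu^F\rangle = m(\bw)$ (using that $\bu^F$ is already orthogonal to $H_E$ since $\sum u^F_e = 0$), and one checks $\bw'\in\cK_G$ provided $m(\bw)$ is the minimum over all $F$ — hence $\bw'\in\partial\cR_G$ and $\dist(\bw,\partial\cR_G)\le m(\bw)$, giving the upper bound with $B=1$. (A small subtlety: one must verify $\bw'$ still lies in $\cK_G$; this holds because moving $\bw$ orthogonally toward $H_F$ only decreases $\sum_{e\in F}w_e$ while the decrease is at most $m(\bw) \le \sum_{e\in F'}w_e - d(F')/k$ for every other $F'$, so no other constraint is violated — here one uses that $\bu^F$ changes $\sum_{e\in F'}w_e$ by at most the full displacement. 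If this is not quite tight, one instead picks an arbitrary point of $\partial\cR_G$ on $H_F\cap\cK_G$ and bounds crudely, losing only a constant.)

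For the lower bound $b\,\dist(\bw,\partial\cR_G)\le m(\bw)$, the cleanest route is a compactness/continuity argument. The function $\bw\mapsto m(\bw)/\dist(\bw,\partial\cR_G)$ is continuous on $\cK_G\setminus\partial\cR_G$, and both numerator and denominator are positive there, so it suffices to show it is bounded below near $\partial\cR_G$. Since everything is piecewise linear, consider: for each facet-defining $F$, on the region where $F$ is a minimizer, $m$ is the linear function $\langle\bw-\bz^F,\bu^F\rangle$; the point $\bx^*$ of Lemma \ref{lem:prop-R_G} has $m(\bx^*) = \eta > 0$ for some explicit $\eta$, and for $\bw\in\cK_G$ the segment from $\bw$ to $\bx^*$ leaves $\partial\cR_G$ at distance $\ge$ (const)$\cdot m(\bw)$... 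Actually the slickest formulation: by piecewise-linearity and finiteness of the facet structure, $\{\bw\in\cK_G : m(\bw)\ge 1\}$ (if nonempty after rescaling — use $\bx^*$ to see $\sup m > 0$) and the recession cone give, via a standard "error bound for polyhedral systems" (Hoffman's lemma), a global constant. I would in fact just invoke Hoffman's lemma directly: $\partial\cR_G = \{\bw\in H_E : \sum_{e\in F}w_e \ge d(F)/k\ \forall F,\ \text{and}\ \sum_{e\in F_0}w_e = d(F_0)/k\ \text{for some }F_0\}$ is a finite union of polyhedra, and Hoffman's bound states the distance to each is controlled by the positive part of the residuals; combined with $m(\bw)$ being exactly the (negated, min) residual, this yields $\dist(\bw,\partial\cR_G)\le b^{-1}m(\bw)$.

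The main obstacle is the lower bound in \eqref{e:equiv-metric}: one must rule out the geometric degeneracy where $\bw$ is close to $\partial\cR_G$ in Euclidean distance yet $m(\bw)$ is anomalously small — i.e., the level sets of $m$ could in principle pinch near a low-dimensional face where several hyperplanes $H_F$ meet at a shallow angle. Hoffman's lemma (equivalently, the fact that a finite system of linear inequalities over a polyhedron admits a global Lipschitz error bound) is exactly the tool that handles this, with the constant $b$ depending only on the finitely many normal vectors $\bu^F$, hence only on $G$; if one prefers a self-contained argument, the same conclusion follows by a routine compactness argument after quotienting out the recession direction, noting that $m$ and $\dist(\cdot,\partial\cR_G)$ are both positively homogeneous of degree $1$ with respect to scaling toward a fixed interior point like $\bx^*$.
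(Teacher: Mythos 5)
Your treatment of \eqref{e:equiv-metric2} is fine and essentially identical to the paper's: the algebraic identity $L^{F,n}(n\bw) = (a_F+b_F)\,n\bigl(\sum_{e\in F}w_e - d(F)/k\bigr)$ plus the bounds $\kappa \le a_F + b_F \le 2$ give the result immediately.

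For \eqref{e:equiv-metric}, however, you have swapped the two directions, and as a result the easy direction is never proved. Your projection argument (the one you label ``upper bound'') establishes the inequality $\dist(\bw,\partial\cR_G) \le |\bw - \bw'| = (a_F+b_F)\,m(\bw)$, i.e.\ $b\,\dist(\bw,\partial\cR_G) \le m(\bw)$ with $b = 1/2$ --- this is the \emph{lower} bound on $m(\bw)$, not the upper bound with $B=1$ as you state (and note $|\bw-\bw'|=\langle\bw-\bz^F,\bu^F\rangle = (a_F+b_F)m(\bw)$, not $m(\bw)$). Your ``subtlety'' about whether $\bw'$ remains in $\cK_G$ in fact resolves cleanly: one computes $\langle\bone_{F'},\bu^F\rangle \le a_F|F| = 1/(a_F+b_F)$, so the decrease in $\sum_{e\in F'}w_e$ under the projection is at most $|\bw-\bw'|/(a_F+b_F) = m(\bw) \le \sum_{e\in F'}w_e - d(F')/k$ by minimality of $F$. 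So this argument is correct and, together with $\bw'\in\cK_G\cap H_F\subset\partial\cR_G$, it does prove the lower bound. Your Hoffman/compactness paragraph then proves exactly the same inequality $\dist(\bw,\partial\cR_G)\le b^{-1}m(\bw)$ a second time, so it is redundant. What is missing entirely is the genuine upper bound $m(\bw)\le B\,\dist(\bw,\partial\cR_G)$; this is actually the trivial direction (each $\phi_F(\bw) = \sum_{e\in F}w_e - d(F)/k$ is $\sqrt{|F|}$-Lipschitz and vanishes on $\cK_G\cap H_F\subset\partial\cR_G$, so $m(\bw)\le\phi_{F_0}(\bw)\le\sqrt{|E|}\,|\bw-\bw_0|$ for any $\bw_0\in\partial\cR_G\cap H_{F_0}$), but it must still be said.

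For comparison, the paper handles both directions at once: it shows $\dist(\bw,\partial\cR_G) = \min_F\dist(\bw, H_E\cap H_F)$ by a segment argument (no interior point of the segment from $\bw$ to its projection onto $H_E\cap H_F$ can lie on any $H_{F'}$, by minimality), then compares $\dist(\bw, H_E\cap H_F)$ to $\dist(\bw,H_F) = |F|^{-1/2}\phi_F(\bw)$ via a uniform angle bound between $H_E$ and the $H_F$'s. This simultaneously gives both constants $b$ and $B$. Your projection-plus-direct-computation route is a valid alternative to the paper's segment argument for establishing that $\bw'\in\cK_G$; what needs fixing is the mislabelling and the absent Lipschitz direction.
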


\begin{proof}
The proof of Lemma \ref{lem:prop-R_G}(ii) showed that $\cK_G \setminus \cR_G = \partial \cR_G$.
Therefore, if $\bw \in \cK_G \setminus \cR_G$ then $\sum_{e \in F} w_e = d(F)/k$ for some 
$\es \subsetneq F \subsetneq E$, and $\dist(\bw, \partial \cR_G) = 0$. In particular, the first 
statement of the lemma holds when $\bw \in \cK_G \setminus \cR_G$. Henceforth assume that 
$\bw \in \cR_G$. Then since $\partial \cR_G = \cup_{\es \subsetneq F \subsetneq E} H_F \cap \cK_G$,
we have
\eqn{e:dist-HF}
{ \dist( \bw, \partial \cR_G )
  = \min_{\es \subsetneq F \subsetneq E} \dist( \bw, \cK_G \cap H_F )
  \ge \min_{\es \subsetneq F \subsetneq E} \dist( \bw, H_E \cap H_F ). }
We claim that the last inequality is in fact an equality. Let $F$ be a set 
for which the minimum in the right hand side of \eqref{e:dist-HF} is attained.
Let $\bw_0$ be the orthogonal projection of $\bw$ onto $H_E \cap H_F$ in the linear
space $H_0$. If the line segment $\bw \, \bw_0$ had any interior point $\bw_1$ 
belonging to any other $H_{F'}$, then this would contradict the minimality 
of $F'$. Therefore, the entire line segment $\bw \, \bw_0$, apart from $\bw_0$,
belongs to $\cR_G$, with $\bw_0 \in \partial \cR_G$.
Hence $\dist ( \bw, H_E \cap H_F) = \dist( \bw, \bw_0 ) \ge \dist ( \bw, \partial \cR_G )$.
This proves our claim. Since $\bw \in H_E$, there exists a constant $B_0$, that only depends on 
$\min \{ \text{angle between $H_E$ and $H_F$} : \es \subsetneq F \subsetneq E \}$,
such that 
\eqnst
{ \dist( \bw, H_F ) 
  \le \dist( \bw, H_E \cap H_F ) 
  \le B_0 \, \dist ( \bw, H_F ). }
This implies the first statement of the lemma, since 
$\dist ( \bw, H_F ) = |F|^{-1/2} \left( \sum_{e \in F} w_e - \frac{d(F)}{k} \right)$.
The second statement of the lemma follows from the definition of $\kappa(G)$,
and the fact that $a_F, b_F \le 1$ (since $\bu^F$ is a unit vector).
\end{proof}

Recall that we write $\bn = n \bx$ for the starting state. Given a randomized
strategy, we write $\bX(t) = \frac{1}{n-t} \bN(t)$. 
Note that we allow the processes $\bN(t)$, $\bX(t)$, etc.~to have 
negative entries, and once this happens, we have $\bX(t) \not\in \cS_G$
for all further times.
We write $\bY(t-1)$ for the vector of edge weights that our strategy 
prescribes for round $t$, and $E(t) \in E$ for the random edge selected 
in round $t$ according to this strategy. We write 
\eqnst
{ \cF_t
  = \sigma \left( \bN(s),\, \bY(s) : 0 \le s \le t \right) }
for the filtration of the process.

\subsection{Steering}
\label{ssec:steering}

In the following proposition we show that if $n$ is large enough, 
then starting from any state in $\cR_G$ that is bounded away from the 
boundary, there is a strategy that steers the process close to 
any other such point in $\cR_G$.

\begin{proposition}
\label{prop:main-steer}
Given $\delta > 0$, there exist $c_1 = c_1(G,\delta) > 0$, 
$\lambda_1 = \lambda_1(G, \delta) > 0$, $n_0 = n_0 (G, \delta)$, 
$K_1 = K_1(G, \delta)$ and $C_1 = C_1(G, \delta)$ such that the following holds. 
Let $n$ and $n_1$ be any positive integers such that $n \ge ( 1 + K_1) n_1$ and
$n_1 \ge n_0$. Suppose that $\bn = n \bx$ with $\dist(\bx, \partial \cR_G) \ge \delta$. 
Suppose also that $\bz \in \cR_G$ with $\dist( \bz, \partial \cR_G ) \ge \delta$, 
with $n_1 \bz$ having integer coordinates. There exists a randomized 
strategy starting from state $\bn$ such that under this strategy we have:
\eqn{e:hit-exact}
{ \P [ \bN(n - n_1) = n_1 \bz ]
  \ge c_1; }
and for all $q \ge 1$ we have
\eqn{e:not-deviate}
{ \P \left[ \left| \bN(n - n_1) - n_1 \bz \right| > q \right]
  \le C_1 \exp ( - \lambda_1 q ). }
\end{proposition}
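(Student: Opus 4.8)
The plan is to steer the process in two phases. In the first, longer phase I use the randomized-strategy interpretation of $\cK_G$ (Lemma~\ref{lem:interpret}) to choose, at each step, a move $\by \in \cK_G$ whose associated drift $\bx + \tfrac{1}{n-t}(\bx - \by)$ pushes the normalized state $\bX(t)$ toward the target direction $\bz$. By \eqref{e:drift1} and the fact that from an interior point of $\cR_G$ one can make the drift point in any direction, I will design a strategy for which $\langle \bX(t) - \bz, \bX(t) - \bz\rangle$ has a strictly negative conditional drift of order $\tfrac{1}{n-t}\,\dist(\bX(t),\bz)$ whenever $\bX(t)$ is not already very close to $\bz$, while remaining at distance $\ge \delta/2$ (say) from $\partial\cR_G$ along the way --- the latter is guaranteed by convexity of $\cK_G$ and the choice $K_1$ large, since the deterministic flow $d\bx/dt = \bx - \bu$ with $\bu \in \cK_G$ can be followed to any interior target (items (i)--(ii) after \eqref{e:drift1}), and the stochastic process is a perturbation of this flow of size $O(1/\sqrt{n-t})$. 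Running this until time $n - 2n_1$ (a constant fraction of $n$, using $n \ge (1+K_1)n_1$) brings $\bN(n - 2n_1)$ within, say, $O(\sqrt{n_1})$ of $2n_1 \bz$ with probability bounded below, with Azuma/Bernstein-type tail bounds giving the exponential estimate \eqref{e:not-deviate} at this intermediate time; the Lyapunov/supermartingale argument together with the bounded increments $|\bN(t) - \bN(t-1)| \le \sqrt{2}$ yields both the positive-probability and the exponential-tail statements.

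The second, short phase is an \emph{exact hitting} argument on the last $n_1$ steps: given that $\bN(n - 2n_1)$ is within $O(\sqrt{n_1})$ of $2n_1\bz$ and still well inside $\cR_G$, I must show there is a strategy with $\P[\bN(n-n_1) = n_1\bz] \ge c_1$. Here I again invoke Lemma~\ref{lem:interpret}: on each of the $n_1$ remaining steps there is a randomized move realizing any prescribed $\by \in \cK_G$, so the increments $\bN(t-1) - \bN(t)$ can be coupled to an i.i.d.-like sequence whose mean can be tuned step by step so that the conditional mean of $\bN(n-n_1)$ equals $n_1 \bz$ exactly; a local central limit theorem for the resulting (lattice-valued, bounded-increment, uniformly elliptic in the relevant directions) sum then gives that the specific value $n_1\bz$ is hit with probability $\ge c/n_1^{(|E|-1)/2}$ --- but to get a \emph{constant} lower bound $c_1$ I instead combine the CLT-scale control with the exact-hitting: I split the last $2n_1$ steps so that the process first reaches some lattice point within $O(1)$ of $n_1\bz$ with probability bounded below (Gaussian-scale estimate on $\sqrt{n_1}$ steps), and then use the remaining $O(1)$-many "free" steps together with a reachability argument on the finite graph to correct the $O(1)$ discrepancy deterministically with positive probability. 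Since $\bz \in \cR_G$ is interior, the hyperplane constraints $\sum_{e\in F} N_e(t) > \tfrac{1}{k}d(F)(n-t)$ stay satisfied with room $\Theta(n_1)$ throughout both phases, so no constraint is violated.

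The exponential tail bound \eqref{e:not-deviate} for all $q \ge 1$ then follows by combining the phase-two local estimate with the phase-one Bernstein bound: writing $\bN(n-n_1) - n_1\bz$ as the telescoping sum of the (bounded, conditionally mean-controlled) increments over the last $cn$ steps and applying Azuma's inequality to each coordinate gives $\P[|\bN(n-n_1) - n_1\bz| > q] \le C_1 e^{-\lambda_1 q}$ for $q$ up to order $\sqrt{n_1}$, while for larger $q$ the crude bound $|\bN(n-n_1) - n_1\bz| \le O(n_1)$ combined with $e^{-\lambda_1 q} \ge e^{-\lambda_1 O(n_1)}$ (adjusting $C_1$) makes the statement vacuous.

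\textbf{Main obstacle.} The hard part will be the second phase: upgrading "within $O(\sqrt{n_1})$ of the target" to the \emph{exact} hit $\bN(n-n_1) = n_1\bz$ with probability bounded below by a constant independent of $n_1$. This requires both a local-CLT-type statement for a non-i.i.d., adaptively-controlled walk with increments constrained to the lattice spanned by $\{\bone^e - \bone^f\}$, and a deterministic reachability lemma on $G$ to mop up the residual $O(1)$ discrepancy --- the latter needs that from any lattice point near $n_1\bz$ inside $\cR_G$ one can reach $n_1\bz$ by a short sequence of admissible moves, which is where the graph structure of $\cR_G$ (via Lemma~\ref{lem:interpret}) enters in an essential way. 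Establishing uniform ellipticity of the walk in the $(|E|-1)$-dimensional hyperplane $H_E$, so that the local CLT gives a lower bound of the right order, is the main technical point.
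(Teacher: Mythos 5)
Your overall two-phase structure (long steering phase, then a short exact-hitting phase) matches the paper's, but your phase two is a genuinely different route — and it has a gap that, as written, would sink the constant lower bound in \eqref{e:hit-exact}.

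The problem is the local-CLT step. You propose to first bring $\bN$ to within $O(\sqrt{n_1})$ of the target and then invoke a local central limit theorem over $\Theta(n_1)$ (or $\Theta(\sqrt{n_1})$) further steps to land within $O(1)$ of $n_1\bz$. A local CLT for a lattice walk in the $(|E|-1)$-dimensional hyperplane $H_E$ gives probability $\Theta(n_1^{-(|E|-1)/2})$ of being in any fixed $O(1)$ window; it does not give a lower bound that is uniform in $n_1$. So the quantity you call $c_1$ would tend to $0$, contradicting the statement. Your parenthetical ``probability bounded below (Gaussian-scale estimate on $\sqrt{n_1}$ steps)'' is exactly where this breaks: the Gaussian scale gives concentration at scale $\sqrt{\text{number of steps}}$, but not pointwise mass at a fixed lattice point.

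What you're missing is that you've underused the Lyapunov/supermartingale argument of phase one. If $\bY(t)$ is chosen so that the normalized drift points from $\bX(t)$ toward $\bz$ (radially), then $D(t) = |\bN(t) - (n-t)\bz| + \tfrac{\delta}{2}t$ is a supermartingale with bounded increments \emph{as long as} $|\bN(t)-(n-t)\bz| \ge d_0$ for a suitable constant $d_0 = d_0(\delta)$; the strictly negative drift is of order $\delta$, not $O(1/(n-t))$. This drives $|\bN(t)-(n-t)\bz|$ down to $O(1)$ (namely $\le d_0$) with probability exponentially close to $1$, and a restart-and-Azuma argument then keeps it $\le q$ until time $n - n_1 - O(1)$ with probability $1 - C\exp(-\lambda q)$. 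Once the distance is a fixed constant, there is no need for a local CLT: the remaining discrepancy is corrected in $O(1)$ steps with probability bounded below by a constant depending only on $G, \delta$.

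A second, related gap: the residual correction must respect the fact that the process is monotone (coordinates only decrease). If after the CLT stage some coordinate satisfies $N_e < n_1 z_e$, no admissible move can fix it. You need to arrange for the intermediate target to be offset (aim at, say, $(n_1 + Mq_0)\bz$ with $M$ chosen so that being within $q_0$ of it forces $\bN \ge n_1\bz$ componentwise); your ``deterministic reachability lemma on $G$'' only works from a componentwise-dominating position, and your write-up doesn't arrange for this. With the Lyapunov approach to $O(1)$ distance plus the offset target, the exact hit follows from elementary reachability on $G$, and \eqref{e:not-deviate} follows from the supermartingale tail bound; this is the paper's argument.
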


The strategy will be defined in three stages: in the first stage 
we reduce $| \bN(t) - (n - t) \bz|$ to $O(1)$; in the second stage we 
keep it within $O(1)$ until time $n - n_1 - O(1)$; and we use the 
last $O(1)$ steps to attempt to hit $n_1 \bz$ exactly. 
The first two of these steps are the content of the next two lemmas. 
After proving the lemmas we assemble them to prove 
Proposition \ref{prop:main-steer}.

\begin{lemma}
\label{lem:1st-stage}
Given $\delta > 0$ there exists $K_2 = K_2(G, \delta)$,
$d_0 = d_0(\delta)$, $\lambda_2 = \lambda_2(G, \delta) > 0$
and $C_2 = C_2(G)$ such that for any $\bx, \bz$ with 
$\dist(\bx, \partial \cR_G), \dist(\bz, \partial \cR_G) \ge \delta$
the following holds. For any $n, n'$ with $n \ge K_2 n'$ and
$n'$ large enough there is a randomized strategy starting from 
state $\bn = n \bx$ such that the stopping time
\eqnst
{ \tau_{d_0}
  = \inf \{ t \ge 0 : | \bN(t) - (n-t) \bz | \le d_0 \} }
satisfies 
\eqn{e:1st-stage-estimate}
{ \P [ \tau_{d_0} > n - n' ]
  \le C_2 \exp ( - \lambda_2 n' ). }
\end{lemma}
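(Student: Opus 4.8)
The plan is to design a strategy that makes $\bN(t)$ track the target trajectory $(n-t)\bz$ by correcting deviations one coordinate-direction at a time, and to show the deviation contracts at a linear rate using a Lyapunov argument. Write $\bD(t) = \bN(t) - (n-t)\bz$. Since $\bz \in \cR_G$ with $\dist(\bz, \partial \cR_G) \ge \delta$, Lemma \ref{lem:interpret} (applied to $\bz$, which lies in $\cK_G$) supplies probabilities $q^{(v)}(e)$ with $\frac{1}{k}\sum_v q^{(v)}(e) = z_e$; more importantly, because $\bz$ is $\delta$-deep in $\cR_G$, by Lemma \ref{lem:interpret} we can perturb these probabilities: for any direction $\bw$ with $\sum_e w_e = 0$ and $\|\bw\|$ small (of size $c\delta$), the vector $\bz + \bw$ is still in $\cK_G$, hence is realized by some $\{q^{(v)}_{\bw}(e)\}$. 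Using \eqref{e:drift1}, if at state $\bN(t-1)$ we play the move realizing $\by = \bz + \bw$, then conditionally on $\cF_{t-1}$,
\eqnst
{ \E[\bD(t) \mid \cF_{t-1}]
  = \bD(t-1) + (n-t)\bz - (n-t+1)\bz + \bz - \by
  = \bD(t-1) - \bw. }
So we have a controllable drift of magnitude $\Theta(\delta)$ that we may point opposite to $\bD(t-1)$ whenever $\bD(t-1) \ne \bzero$: choose $\bw$ proportional to $P_0 \bD(t-1)$ (its projection onto $\{\sum_e w_e = 0\}$, which equals $\bD(t-1)$ itself since $\bN(t), (n-t)\bz \in H_E$ up to rounding), truncated to have norm $\min\{c\delta, |\bD(t-1)|\}$.

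Next I would run the standard supermartingale / exponential-decay computation on $|\bD(t)|$. The increments $\bD(t) - \bD(t-1)$ are bounded (by $1 + |\bz| \le 2$, say), and have conditional mean $-\bw$ with $|\bw| = \min\{c\delta, |\bD(t-1)|\}$. A routine estimate gives, when $|\bD(t-1)| \ge d_0$ for a suitable $d_0 = d_0(\delta)$,
\eqnst
{ \E\bigl[\, |\bD(t)| \bigm| \cF_{t-1} \,\bigr]
  \le |\bD(t-1)| - \tfrac{1}{2} c\delta, }
the $\frac12$ absorbing the second-order $O(1)$ term provided $d_0$ is large enough relative to $1/\delta$. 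This is the contraction. To turn it into the exponential bound \eqref{e:1st-stage-estimate} I would introduce $g(t) = \exp(\eta |\bD(t)|)$ for small $\eta = \eta(\delta) > 0$ and check, using boundedness of increments and the drift bound, that $g(t \wedge \tau_{d_0})$ is a supermartingale (again for $\eta$ small). Then
\eqnst
{ \P[\tau_{d_0} > n - n'] = \P[|\bD(n-n')| > d_0,\ \tau_{d_0} > n-n']
  \le e^{-\eta d_0}\, \E[g((n-n') \wedge \tau_{d_0})] \cdot e^{?} }
— more cleanly, I would instead bound $\P[\tau_{d_0} > m]$ for $m = n - n'$ by noting that on $\{\tau_{d_0} > m\}$ the process $|\bD(t)|$ has had drift $\le -\frac12 c\delta$ at each of $m$ steps while staying nonnegative; since $|\bD(0)| = |\bn - n\bz| \le 2n$, a Hoeffding/Azuma bound on the martingale part $M_t = |\bD(t)| - |\bD(0)| - \sum (\text{drifts})$ gives $\P[\tau_{d_0} > m] \le \exp(-\lambda_2 n')$ once $m \ge n - n'$ forces the accumulated drift $\frac12 c\delta\, m$ to exceed $2n + \text{(fluctuation)}$; this is where the hypothesis $n \ge K_2 n'$ enters — we need $\frac12 c\delta (n - n') \gtrsim 2n$, i.e. $K_2$ chosen so that $n' \le n/(1+K_2)$ makes $n - n' \ge (1 - \tfrac{1}{1+K_2}) n$ large enough, concretely $K_2 = K_2(G,\delta)$ with $\frac12 c\delta \cdot \frac{K_2}{1+K_2} > 2$, and $n'$ large so the Azuma fluctuation $O(\sqrt{n})$ is absorbed.

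One subtlety to handle carefully: the control $\by = \bz + \bw$ must be a genuine probability assignment at every step, which requires $\bz + \bw \in \cK_G$; this is where $\dist(\bz, \partial \cR_G) \ge \delta$ is used, and it forces the constant $c$ in $|\bw| \le c\delta$ to depend only on $G$ (via the geometry of the hyperplanes $H_F$, cf.\ Lemma \ref{lem:equiv-metric}). A second point: we also need $\bN(t)$ to stay in (a neighborhood of) $\cS_G$ so that the moves are legal, i.e.\ whenever vertex $V_t$ is drawn there is actually an incident edge with positive remaining weight; since $\bN(t)$ stays within $d_0 + O(1)$ of $(n-t)\bz$ and $\bz$ is $\delta$-deep, each coordinate $N_e(t) \approx (n-t) z_e$ is comfortably positive until $t$ is within $O(1)$ of $n$, so no edge runs out during $[0, n - n']$ — but I would state this explicitly rather than leave it implicit, perhaps by noting it follows from $\dist(\bz,\partial\cR_G)\ge\delta$ and $\bz \in \cR_G \subset \cS_G$ having all coordinates $\ge \delta'$.

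\textbf{Main obstacle.} The real work is the two-sided bookkeeping in the Lyapunov step: showing the $O(1)$ increment variance does not swamp the $\Theta(\delta)$ drift. This is purely a matter of taking $d_0(\delta)$ large enough and $\eta(\delta)$ small enough, so it is routine but must be done with care to get clean constants; everything else (the existence of the steering control, legality of moves, and the final Azuma bound) is a direct consequence of Lemmas \ref{lem:interpret} and \ref{lem:equiv-metric} together with the hypothesis $n \ge K_2 n'$.
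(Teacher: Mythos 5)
There is a genuine gap in your proposal: the constant drift of magnitude $c\delta$ is too small to shrink the initial deviation in the available time. You start with $|\bD(0)| = |n\bx - n\bz| = n\,|\bx - \bz|$, and $|\bx - \bz|$ can be comparable to the diameter of the $\delta$-deep part of $\cR_G$, i.e., of order $1$ independent of $\delta$. With per-step drift $-c\delta$, shrinking to $O(1)$ requires $\sim n|\bx-\bz|/(c\delta)$ steps, which exceeds the $n - n' < n$ available steps as soon as $|\bx - \bz| > c\delta$. Your own sufficient condition, ``choose $K_2$ with $\tfrac12 c\delta\cdot\tfrac{K_2}{1+K_2} > 2$,'' is in fact unsatisfiable: $\tfrac{K_2}{1+K_2} < 1$ and $c\delta \le \sqrt{2}$ (simplex diameter), so the left side is always $< 1$. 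This is the arithmetic telling you the single-stage Lyapunov argument cannot close.

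The paper fixes exactly this by a two-stage strategy. In the first stage it does not cap the control at $\bz + c\delta\bu$ but pushes $\bY(t)$ all the way to $\partial\cR_G$ along the fixed direction $\bu = (\bx-\bz)/|\bx-\bz|$. Because $\bx$ is $\delta$-deep and lies on the segment from $\bz$ to the exit point $\by \in \partial\cR_G$, one has $\langle\bY(t)-\bz,\bu\rangle \ge \mu|\bx-\bz|$ with $\mu > 1$, i.e., the drift \emph{scales with} the size of the initial deviation rather than being pinned to $\delta$. This lets $S(t) = \langle \bN(t) - (n-t)\bz, \bu\rangle$ hit $O(1)$ in about $n/\mu < n$ steps, at which point $|\bX(t)-\bz| \le \eps_0$ with $\eps_0 < \delta/4$ chosen small. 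Only then does the paper run your kind of radial Lyapunov contraction (its $D(t) = |\bN(t)-(n-t)\bz| + \tfrac{\delta}{2}t$ supermartingale); at that stage the deviation is $\le n_1\eps_0 \ll n_1\delta$, so the constant drift of order $\delta$ finishes in $< n_1/2$ steps. Your second half — the supermartingale on $|\bD(t)|$, the choice of $d_0(\delta)$ to absorb the $O(1)$ second-order term, and the Azuma tail bound — is correct and is essentially the paper's second stage; what is missing is the first stage with direction-aligned, $\mu|\bx-\bz|$-sized drift, and without it the time budget does not close.

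One smaller point worth flagging: you should also control the component of $\bD(t)$ orthogonal to the steering direction, as the paper does with $\bX^\ort(t)$ and a separate Azuma bound, since the guarantee $\langle\bY(t)-\bz,\bu\rangle \ge \mu|\bx-\bz|$ only holds while $\bX(t)$ stays close to the line through $\bz$ and $\bx$.
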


\begin{proof}
The value of $d_0 > 0$ will be chosen in course of the proof.
We are also going to use a small parameter $0 < \eps_0 < \delta/4$,
chosen later. The first step of the proof is to reach an
$\eps_0$-neighbourhood of $\bz$.

Let $\by$ be the point where the halfline starting at $\bz$ and 
passing through $\bx$ intersects $\partial \cR_G$. Let $\bu$ denote
the unit vector with the same direction as $\bx - \bz$.
In the first step, we use the following strategy: given the current state 
$\bN(t) = (n-t) \bX(t)$, we select $\bY(t) \in \partial\cR_G$ 
such that $\bY(t) - \bX(t)$ is a positive multiple of $\bu$.
In particular, $\bY(0) = \by$.
We employ this strategy until the stopping time $\tau(1)$ defined by
\eqnst
{ \tau(1)
  = \inf \{ t \ge 0 : | \bX(t) - \bz | \le \eps_0 \}. }
Let us write $\bX^\ort(t)$ for the component of the vector 
$\bX(t) - \bz$ orthogonal to $\bu$.
Let 
\eqn{e:S(t)-1}
{ S(t)
  = \langle \bN(t) - (n-t) \bz, \bu \rangle. }
Since 
\eqnst
{ \bN(t+1)
  = \left( \bN(t) - \bY(t) \right) + \left( \bY(t) - \bone^{E(t+1)} \right), }
and the second term has mean $\bzero$ given $\cF_t$, we have
\eqnspl{e:mart}
{ \E [ S(t+1) \,|\, \cF_t ]
  &= S(t) - \langle \bY(t) - \bz, \bu \rangle. }
Since $\bx$ and $\bz$ are bounded away from $\partial \cR_G$,
there exist $\mu = \mu(G,\delta) > 1$ and $\eps_0 = \eps_0(G, \delta) > 0$
such that as long as $|\bX^\ort(t)| \le \frac{\eps_0}{2}$, we have
\eqn{e:drift-lb}
{ \langle \bY(t) - \bz, \bu \rangle
  \ge \mu |\bx - \bz|. }
This implies that $S'(t) = S(t) + t \mu |\bx - \bz|$ is a
supermartingale as long as $|\bX^\ort(t)| \le \eps_0/2$.
On the other hand, due to the calculation in \eqref{e:drift1}, 
$\bX^\ort(t)$ is a martingale.

Let $t_1 = \frac{1 + \mu}{2 \mu} n$. Due to the choice of 
$\mu$ and $\eps_0$, we have the inclusions
\eqnspl{e:terminate}
{ \left\{ \tau(1) > t_1 \right\}
  &\subset \{ \text{$|\bX^\ort(s)| > \eps_0/2$ for some $0 \le s \le t_1$} \} \\
  &\qquad\qquad \cup \{ \text{$S(s) > \mu (n-s) |\bx - \bz|$ for some $0 \le s \le t_1$} \} 
     \cup \{ \text{$S(t_1) \ge 1$} \} \\
  &\subset \left\{ \max_{0 \le s \le t_1} |\bX^\ort(s)| > \eps_0/2 \right\} 
     \cup \left\{ \max_{0 \le s \le t_1} S'(s) - S'(0) > (\mu - 1) n |\bx - \bz| \right\} \\
  &\qquad\qquad \cup \left\{ \max_{0 \le s \le t_1} S'(s) - S'(0) 
    > \frac{\mu - 1}{2} n |\bx - \bz| \right\}. }
The inclusions \eqref{e:terminate} imply 
\eqnspl{e:1st-stage-bad-0}
{ \P [ \tau(1) > t_1 ]
  \le \P \left[ \max_{0 \le s \le t_1} S'(s) - S'(0) > \frac{\mu - 1}{2} n |\bx - \bz| \right]
      + \P \left[ \max_{0 \le s \le t_1} |\bX^\ort(s)| > \eps_0/2 \right]. }
Since $S'(t)$ has increments bounded by $(1 + \mu) \sqrt{2}$, 
while $|\bX^\ort(t+1) - \bX^\ort(t)| \le \sqrt{2}/(n-t-1)$, 
we can apply the Azuma-Hoeffding inequality 
(see \cite[Exercise E14.2]{Wbook} or \cite[Theorem 12.2(3)]{GSbook}) 
to $\{ S'(t) \}_{t \ge 0}$ as well as to the projection of 
$\{ \bX^\ort(t) \}_{t \ge 0}$ to each coordinate direction. 
This yields
\eqnspl{e:1st-stage-bad}
{ \P [ \tau(1) > t_1 ] 
  &\le \exp \left( - \frac{(\mu - 1)^2}{8} \frac{n^2 |\bx - \bz|^2}{t_1 \, 2 \, (1+\mu)^2} \right) 
      + 2 |E| \exp \left( - \frac{1}{8} \frac{\eps_0^2}{t_1 |E| \sum_{s = 1}^{t_1} \frac{2}{(n - s)^2} } \right) \\
  &\le C' \exp ( - \lambda'n ) }
for some $\lambda' = \lambda'(\mu, \eps_0) > 0$ and $C' = C'(G)$. 

For the second step we condition on the point
$\bn_1 = n_1 \bx_1 = \bN(\tau(1))$, such that $n - n_1 \le t_1$ and
$| \bx_1 - \bz | \le \eps_0 < \delta/4$. For ease of notation,
we re-parametrize time for this step so that $\bN(0) = \bn_1$.
We choose $\bY(t)$ to be the point where the halfline starting at $\bz$
and passing through $\bX(t)$ intersects $\partial \cR_G$. Let us write
$\bu(t)$ for the unit vector with the same direction as $\bX(t) - \bz$.
Decompose $\bX(t+1) - \bz = X'(t+1) \bu(t) + \bX''(t+1)$, where
$\langle \bX''(t+1), \bu(t) \rangle = 0$.
As long as $|\bN(t) - (n-t) \bz| \ge d_0$, we have 
\eqnsplst
{ | \bN(t+1) - (n-t-1) \bz|
  &= \sqrt{ \langle \bN(t+1) - (n-t-1) \bz, \bu(t) \rangle^2 + (n-t-1)^2|\bX''(t+1)|^2 } \\
  &\le \sqrt{ \langle \bN(t+1) - (n-t-1) \bz, \bu(t) \rangle^2 + 2 } \\
  &\le \langle \bN(t+1) - (n-t-1) \bz, \bu(t) \rangle + \frac{2}{d_0-\sqrt{2}}. }
Therefore, 
\eqnsplst
{ \E \big( | \bN(t+1) - (n-t-1) \bz| \,\big|\, \cF_t \big) 
  &\le \E \big( \langle \bN(t+1) - (n-t-1) \bz, \bu(t) \rangle \,\big|\, \cF_t \big) 
     + \frac{2}{d_0-\sqrt{2}} \\
  &= \langle \bN(t) - (n-t) \bz, \bu(t) \rangle - \langle \bY(t) - \bz, \bu(t) \rangle 
     + \frac{2}{d_0-\sqrt{2}} \\
  &\le |\bN(t) - (n-t) \bz| - \delta + \frac{2}{d_0-\sqrt{2}}. }
Hence if we require that $d_0 \ge \sqrt{2} + \frac{4}{\delta}$, then 
\eqnst
{ D(t)
  = |\bN(t) - (n-t) \bz| + \frac{\delta}{2} t, \quad t \ge 0, }
is a supermartingale until $\tau_{d_0}$. Since the increments of
$D(t)$ are bounded by $2 + \frac{\delta}{2} < 3$, and 
$\eps_0 < \frac{\delta}{4}$, it follows with $t_2 = \frac{3}{4} n_1$ that
\eqnst
{ \P \left[ \tau_{d_0} > t_2 \right]
  \le \P \left[ \max_{0 \le s \le t_2} (D(s) - D(0)) > \frac{\delta}{8} n_1 \right]
  \le \exp \left( - \frac{\delta^2 t_2^2}{64 \cdot 3^2 \, t_2} \right)
  \le \exp ( - \lambda'' n_1 ) }
with some $\lambda'' = \lambda''(\delta) > 0$. 

Putting the two parts together, the statement follows if we choose
$K_2 = \frac{8 \mu}{\mu - 1}$.
\end{proof}

\begin{lemma}
\label{lem:2nd-stage}
Given $\delta > 0$ there exist $\lambda_3 = \lambda_3(\delta) > 0$ 
and $C_3 = C_3(\delta)$ such that such that for all $n' \ge n'' \ge 0$ 
and all $\bw, \bz \in \cK_G$ with $\dist(\bz, \partial \cR_G) \ge \delta$,
$|n' \bw - n'\bz| \le d_0(\delta)$ the following holds. There 
exists a randomized strategy starting in state $\bn' = n' \bw$ such that 
for all $q \ge 1$ we have
\eqn{e:2nd-stage-estimate}
{ \P \left[ | \bN(n' - n'') - n'' \bz | > q \right]
  \le C_3 \exp ( - \lambda_3 q ). }
\end{lemma}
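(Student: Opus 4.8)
The plan is to steer the process so that $\bN(t)$ tracks the straight line $t\mapsto(n'-t)\bz$, and then to control the error vector $\mathbf{W}(t):=\bN(t)-(n'-t)\bz$ by an exponential Lyapunov function. Observe that $\mathbf{W}(t)$ always lies in the linear subspace $\{\bw\in\R^E:\sum_e w_e=0\}$ parallel to $H_E$, since each step lowers the total weight by $1$ and $\sum_e z_e=1$. The strategy I would use is: at time $t$, given $\bN(t)$, compute $\mathbf{W}(t)$, and if $\mathbf{W}(t)\neq\bzero$ prescribe the randomized move
$$\bY(t)=\bz+\frac{\min\{|\mathbf{W}(t)|,\,\delta\}}{|\mathbf{W}(t)|}\,\mathbf{W}(t),$$
while if $\mathbf{W}(t)=\bzero$ set $\bY(t)=\bz$. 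Since $\dist(\bz,\partial\cR_G)\ge\delta$ and $\cK_G=\overline{\cR_G}$ by Lemma~\ref{lem:prop-R_G}(ii), the closed $\delta$-ball about $\bz$ inside $H_E$ is contained in $\cK_G$; as $\bY(t)$ lies in that ball, $\bY(t)\in\cK_G$, so Lemma~\ref{lem:interpret} supplies numbers $q^{(v)}(e)$ realizing $\bY(t)$ as a bona fide randomized move, which we follow. With $\bN(t+1)=\bN(t)-\bone^{E(t+1)}$ and $\E[\bone^{E(t+1)}\mid\cF_t]=\bY(t)$, setting $\Delta_{t+1}=\bz-\bone^{E(t+1)}$ gives $\mathbf{W}(t+1)=\mathbf{W}(t)+\Delta_{t+1}$, with $|\Delta_{t+1}|\le2$ and $\E[\Delta_{t+1}\mid\cF_t]=\bz-\bY(t)=-\frac{\min\{|\mathbf{W}(t)|,\delta\}}{|\mathbf{W}(t)|}\mathbf{W}(t)$; thus the conditional drift of $\mathbf{W}(t)$ points straight back toward the origin of the subspace.

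Next I would control $R(t):=|\mathbf{W}(t)|$. Using $|a+b|\le|a|+\langle a,b\rangle/|a|+|b|^2/(2|a|)$ for $a\neq0$, with $a=\mathbf{W}(t)$ and $b=\Delta_{t+1}$, and taking conditional expectations gives, whenever $R(t)>0$,
$$\E[R(t+1)-R(t)\mid\cF_t]\le-\min\{R(t),\delta\}+\frac{2}{R(t)}.$$
Hence there is a threshold $L_0=L_0(\delta)$, of order $\max\{\delta,1/\delta\}$, with this drift $\le-\delta/2$ on $\{R(t)\ge L_0\}$, whereas trivially $R(t+1)\le R(t)+2\le L_0+2$ on $\{R(t)<L_0\}$. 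Combining the negative-drift estimate with $|R(t+1)-R(t)|\le2$ and a second-order Taylor bound $e^{\eta\xi}\le1+\eta\xi+c\,\eta^2$ valid for $|\xi|\le2$, $\eta\le1$ (with $c$ an absolute constant), one can fix $\eta=\eta(\delta)\in(0,1]$ small enough that, with $\theta:=\eta\delta/4$ and $A_0:=e^{\eta(L_0+2)}$,
$$\E\big[e^{\eta R(t+1)}\,\big|\,\cF_t\big]\le e^{-\theta}\,e^{\eta R(t)}+A_0\qquad\text{for all }t\ge0,$$
the bound on $\{R(t)<L_0\}$ being immediate from $e^{\eta R(t+1)}\le A_0$.

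To finish, iterate this inequality from $t=0$: since $R(0)=|n'\bw-n'\bz|\le d_0(\delta)$, one gets $\E[e^{\eta R(t)}]\le e^{\eta d_0}+A_0/(1-e^{-\theta})=:C_3$ for every $t\le n'-n''$, a constant depending on $\delta$ alone. Taking $t=n'-n''$ and applying Markov's inequality yields, for $q\ge1$,
$$\P\big[\,|\bN(n'-n'')-n''\bz|>q\,\big]=\P[R(n'-n'')>q]\le e^{-\eta q}\,\E\big[e^{\eta R(n'-n'')}\big]\le C_3\,e^{-\eta q},$$
which is \eqref{e:2nd-stage-estimate} with $\lambda_3=\eta$.

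The step I expect to be the real work is establishing the geometric drift inequality with constants uniform in the allowed data: one must check that $\eta$, $L_0$, $\theta$ and $A_0$—hence $\lambda_3$ and $C_3$—can be chosen as functions of $\delta$ only (the increment bound $|\Delta_{t+1}|\le2$ being graph-independent), and that the drift estimate remains valid throughout, including when $\bN(t)$ has negative coordinates. The latter is harmless, since the model permits negative coordinates and the strategy is defined purely through $\bN(t)$ and the prescribed move; the graph $G$ enters only via the appeal to Lemma~\ref{lem:interpret}, which lets us realize the target $\bY(t)\in\cK_G$ as an actual randomized assignment.
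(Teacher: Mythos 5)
Your proof is correct, and it takes a genuinely different route from the paper's. The paper follows the strategy of Lemma~\ref{lem:1st-stage}: when $|\bN(t)-(n'-t)\bz|\ge d_0$, aim $\bY(t)$ at the point where the ray from $\bz$ through $\bX(t)$ meets $\partial\cR_G$, so that $D(t)=|\bN(t)-(n'-t)\bz|+\tfrac{\delta}{2}\sum_{s<t}I[|\bN(s)-(n'-s)\bz|\ge d_0]$ is a supermartingale on excursions away from the $d_0$-ball; it then chops $[0,n'-n'']$ into length-$q$ blocks, considers the last block in which the deviation dipped below $4q$, and applies Azuma--Hoeffding to each block, summing the resulting geometric series. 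You instead use a simpler strategy (a radial pull of magnitude $\min\{|\mathbf{W}(t)|,\delta\}$, always well-defined and always in $\cK_G$ by the $\delta$-ball observation) and establish a Foster--Lyapunov geometric-drift inequality for the moment generating function $e^{\eta R(t)}$, after which a single iteration plus Markov's inequality finishes the job. Your approach is more systematic (it is the standard route to geometric tails for Markov chains with bounded increments and inward drift), avoids the chunked optional-stopping argument, and makes the $\delta$-only dependence of $\lambda_3,C_3$ transparent via the universal increment bound $|\Delta_{t+1}|\le 2$; the paper's approach reuses the supermartingale already set up in Lemma~\ref{lem:1st-stage} at the price of a slightly more ad hoc block decomposition. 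Both handle negative coordinates without issue, as noted in the paper's remark after the lemma.

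One small point to make airtight: the inequality $|a+b|\le|a|+\langle a,b\rangle/|a|+|b|^2/(2|a|)$ requires the right-hand side to be nonnegative; this is automatic once $R(t)\ge L_0\ge 2$, and on $\{R(t)<L_0\}$ you never invoke it (there the trivial bound $e^{\eta R(t+1)}\le A_0$ suffices), so no gap results, but it is worth stating that the Taylor-type bound is only used above the threshold $L_0$.
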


\begin{proof}
When $| \bN(t) - (n'-t) \bz| < d_0$, let us apply an arbitrary move,
otherwise, let us follow the strategy used in the second part of 
Lemma \ref{lem:1st-stage}. We saw in the proof of Lemma \ref{lem:1st-stage} 
that 
\eqnst
{ D(t)
  = |\bN(t) - (n-t) \bz| + \frac{\delta}{2} \sum_{0 \le s < t} 
    I [ |\bN(s) - (n-s) \bz| \ge d_0 ] }
is a supermartingale on any time interval $s \in [t_1,t_2)$ on which
$|\bN(s) - (n-s) \bz| \ge d_0$. Assume the event 
\eqnst
{ F(q)
  = \left\{ \text{$|\bN(n' - n'') - n'' \bz | > 4 q$} \right\}, }
and suppose $q > d_0$. When $n' - n'' < q$, the event $F(q)$ is 
impossible, because $| \bN(0) - n' \bz | \le d_0 < q$ and the 
increments of $|\bN(t) - (n-t) \bz|$ are bounded by $2$.
Hence we may assume that $\ell_\maxim := \lfloor (n' - n'')/q \rfloor \ge 1$. 
Since $D(0) \le d_0 < q$, the inequalities 
\eqn{e:not-all}
{ | \bN(n' - n'' - \ell q ) - (n'' + \ell q) \bz |
  > 4 q, \quad \ell = 0, \dots, \ell_\maxim, }
cannot all simultaneously be satisfied. Summing over the 
smallest $\ell$ for which \eqref{e:not-all} fails, we have 
\eqnspl{e:2nd-stage-bad-3}
{ \P [ F(q) ]
  &\le \sum_{1 \le \ell \le \ell_\maxim} \P \left[ D(n' - n'') 
     - D(n' - n'' - \ell q) > \frac{\delta}{2} q \ell \right] \\
  &\le \sum_{\ell \ge 1} \exp \left( - \frac{1}{8} \frac{\delta^2 q^2 \ell^2}{3^2 \, q \ell} \right) 
  \le C_3 \exp ( - \lambda_3 q ). }
Adjusting the constant $C_3$, if necessary, we have the statement for all
$q > 0$. This completes the proof.
\end{proof}

\begin{remark}
Note that the above strategy does not require the coordinates to stay positive.
This will become important in Section \ref{ssec:II}.
\end{remark}

\begin{proof}[Proof of Proposition \ref{prop:main-steer}.]
Observe that if there is no point $\bw$ such that 
$\dist( \bw, \partial \cR_G ) \ge \delta$, then the statement of the
Proposition holds vacuously. Henceforth assume that $\delta$ is small enough
so that the set above is non-empty. We choose $q_0 \ge 2$ so that for the event $F(q)$ introduced in the
proof of Lemma \ref{lem:2nd-stage} we have $\P [ F(q_0/4) ] \le \frac{1}{2}$.
Let $M$ be the smallest integer such that 
\eqnst
{ M 
  \ge \left( \min \left\{ w_e : e \in E,\, \bw \in \cR_G,\, 
    \dist (\bw, \partial \cR_G) \ge \delta \right\} \right)^{-1}, }
which is finite by our assumption on $\delta$.
We choose $K_1$ and $n_0$ such that $n \ge K_1 n_1$ and $n_1 \ge n_0$ imply
$n \ge K_2 ( n_1 + M q_0 )$, where $K_2$ is the constant from 
Lemma \ref{lem:1st-stage}.
Following the strategies in Lemmas \ref{lem:1st-stage} and \ref{lem:2nd-stage}
over the time interval $[n, n - n_1 - M q_0]$ we have
\eqn{e:2nd-stage-good}
{ \P \left[ | \bN(n - n_1 - M q_0) - (n_1 + M q_0 ) | \le q_0 \right]
  \ge \frac{1}{2} - C_2 \exp ( - \lambda_2 n_1 )
  \ge \frac{1}{4}, }
if $n_0$ is large enough. On the event in \eqref{e:2nd-stage-good} we have
\eqnsplst
{ &N_e(n - n_1 - M q_0) - n_1 z_e \\
  &\qquad \ge (M q_0) z_e - | N_e(n - n_1 - M q_0) 
     - (n_1 + M q_0) z_e | \\
  &\qquad \ge q_0 - q_0
  = 0, \quad e \in E. }
Therefore, $\bN(n - n_1 - M q_0) \ge n_1 \bz$ componentwise, and
there is a strictly positive probability $c_1 = c_1(G, \delta) > 0$
that $n_1 \bz$ can be hit exactly from the state $\bN(n - n_1 - M q_0)$. 
This proves \eqref{e:hit-exact} of the Proposition.
Since the form of the bound \eqref{e:2nd-stage-estimate} is not
affected by taking $M q_0$ extra steps, statement 
\eqref{e:not-deviate} follows from the estimates \eqref{e:1st-stage-estimate}
and \eqref{e:2nd-stage-estimate} of Lemmas \ref{lem:1st-stage} and \ref{lem:2nd-stage}.
\end{proof}

\subsection{Proof of the Main Theorem}
\label{ssec:proof-main}

In this section we complete the proof of Theorem \ref{thm:phase-trans-graph}.

\begin{proof}[Proof of Theorem \ref{thm:phase-trans-graph}(i).] 
Fix $\bx \in \cI_G$, and let 
$\es \subsetneq F \subsetneq E$ be a set such that
$\sum_{e \in F} x_e < \frac{d(F)}{k}$. Then for some $\eps = \eps(G, \bx) > 0$
and sufficiently large $n$ we have 
$\frac{1}{n} \sum_{e \in F} N_e(0) < \frac{d(F)}{k} - \eps$.
Let 
\eqnst
{ Y_t
  = \begin{cases}
    1 & \text{if $V_t = v$ and $\deg_F(v) = \deg_G(v)$;} \\
    0 & \text{otherwise.} 
    \end{cases} }
Since any $v$ with $\deg_F(v) = \deg_G(v)$ must be
assigned to one of the edges in $F$, we have
\eqnsplst
{ p_G(\bn)
  &\le \P \left[ \sum_{t = 1}^n Y_t \le \sum_{e \in F} N_e(0) \right]
  \le \P \left[ \frac{1}{n} \sum_{t=1}^n Y_t < \frac{d(F)}{k} - \eps \right] 
  \le \exp \left( -n \frac{\eps^2}{4} \right). }
by Bernstein's inequality; see \cite[Theorem 2.2(1)]{GSbook}.
The rate of decay is bounded away from $0$ as long as 
$\bx$ is bounded away from $\partial \cR_G$.
\end{proof}

\begin{proof}[Proof of Theorem \ref{thm:phase-trans-graph}(ii).] 
We show that for any fixed $\delta > 0$ we have 
\eqn{e:limits}
{ \lim_{n \to \infty} M_n
  = \lim_{n \to \infty} m_n 
  = \alpha, } 
where
\eqnsplst
{ m_n
  &= m_n(\delta)
  = \min \left\{ p_G(\bn) : \sum_{e \in E} n_e = n,\, 
    \dist(\bn/n, \partial \cR_G) \ge \delta \right\}, \quad n \ge 1; \\
  M_n
  &= M_n(\delta) 
  = \max \left\{ p_G(\bn) : \sum_{e \in E} n_e = n,\, 
    \dist(\bn/n, \partial \cR_G) \ge \delta \right\}, \quad n \ge 1; \\
  \alpha
  &= \alpha(\delta) 
  = \liminf_{n \to \infty} m_n(\delta). }
We consider $n' \ge n_0$, $n \ge K_1 n'$ and $\bn = n \bx$ such that 
$m_n = p_G(\bn)$. We apply Proposition \ref{prop:main-steer} with 
$\bz = \bn'/n'$, where $\bn'$ is chosen so that $M_{n'} = p_G(\bn')$.

Let $\varphi(\br)$ denote the probability that with the
strategy described in Proposition \ref{prop:main-steer} 
the state at time $n - n'$ is $n' \bz + \br$,
where $\sum_{e \in E} r_e = 0$. Due to 
Proposition \ref{prop:main-steer}, we have
$\varphi(0) \ge c_1$. Therefore, we can write
\eqnsplst
{ m_n
  = p_G(\bn) 
  &\ge \sum_{\br : \sum_{e \in E} r_e = 0} \varphi(\br) \, p_G(n' \bz + \br) 
  \ge c_1 p_G(n' \bz) + \sum_{\substack{\br \not= \bzero : \\ \sum_{e \in E} r_e = 0}} 
     \varphi(\br) \, p_G(n' \bz + \br) \\
  &\ge c_1 (M_{n'} - m_{n'}) + \sum_{\br : \sum_{e \in E} r_e = 0}
     \varphi(\br) \, m_{n'} \\
  &\ge c_1 (M_{n'} - m_{n'}) + m_{n'} - C \exp ( -\lambda n' ) }
with some $\lambda > 0$ and $C$ depending on $\delta$ and 
$\lambda_1, \lambda_2, \lambda_3$. Rearranging gives
\eqn{e:with-c}
{ M_{n'} - m_{n'}
  \le \frac{1}{c_1} ( m_n - m_{n'} ) + \frac{C}{c_1} \exp ( -\lambda n' ). }
Since $n \ge K n'$ was arbitrary, taking $\liminf_{n \to \infty}$ yields
\eqn{e:liminf}
{ M_{n'} - m_{n'}
  \le \frac{1}{c_1} (\alpha - m_{n'}) + \frac{C}{c_1} \exp ( - \lambda n' ). }
Taking $\limsup_{n' \to \infty}$ in \eqref{e:liminf} yields $M_{n'} - m_{n'} \to 0$.
Taking $\liminf_{n' \to \infty}$ in \eqref{e:liminf} yields
\eqnst
{ 0 
  \le \liminf_{n' \to \infty} (M_{n'} - m_{n'})
  \le \frac{1}{c_1} (\alpha - \limsup_{n' \to \infty} m_{n'})
  \le 0. }
This shows that $\lim_{n' \to \infty} m_{n'} = \alpha$, and the 
proof of \eqref{e:limits} is complete.

The limit does not depend on $\delta$, since for
$0 < \delta_1 < \delta_2$ we have
\eqnst
{ m_n(\delta_1)
  \le m_n(\delta_2)
  \le M_n(\delta_2)
  \le M_n(\delta_2), }
and hence $\alpha(\delta_1) = \alpha(\delta_2) = c_G$.

We conclude the proof by noting that $c_G > 0$. This is because
Proposition \ref{prop:main-steer} implies that 
the process can be steered close to the point $n_0 \bx^*$
for a sufficiently large $n_0$ with positive probability, and
from here there is a strictly positive probability of winning.
\end{proof}

\begin{remark}
Since the left hand side of \eqref{e:liminf} is non-negative,
we can rearrange to get
\eqnst
{ m_{n'} 
  \le \alpha + C \exp ( - \lambda n' ), \quad n' \ge n_0. }
We do not have a corresponding exponential lower bound on the speed
at which the limit $\alpha$ is approached. 
See Question \ref{prob:exp-converge} in Section \ref{sec:open}.
\end{remark}

\section{Upper bounds in the critical region}
\label{sec:critical-proof}

In this section we obtain estimates in the critical region.
This requires distinguishing a few cases that we state as 
separate propositions in the next section, and use them to prove
Theorem \ref{thm:bdry}. The proofs of the three propositions are given
in Sections \ref{ssec:I}, \ref{ssec:II} and \ref{ssec:III}, 
respectively.

\subsection{Statements of upper bounds in three subregions}
\label{ssec:state-regions}

We define the sets of configurations
\eqnspl{e:cB_G}
{ \cB_G^I(n; A)
  &= \left\{ \bn \in n \cS_G : \, \text{for some 
     $\es \subsetneq F \subsetneq E$ we have 
     $L^{F,n}(\bn) \le -A \sqrt{n}$} \right\} \\   
  \cB_G^{II}(n; A)
  &= \left\{ \bn \in n \cS_G : \, \text{for all
     $F$ with $0 < d(F) < k$ we have 
     $L^{F,n}(\bn) \ge A \sqrt{n}$} \right\} \\ 
  \cB_G^{III}(n; A)
  &= \left\{ \bn \in n \cS_G : \, 
     -A \sqrt{n} < \min_{F: 0 < d(F) < k} L^{F,n}(\bn) 
     < A \sqrt{n} \right\}. }

\begin{proposition}
\label{prop:far-enough}
For all $A > 0$ we have
\eqnst
{ \limsup_{n \to \infty} \, \max \{ p_G(\bn) : \bn \in \cB_G^I(n; A) \}
  \le \exp \left( - \frac{A^2}{8} \right). } 
In particular, the $\limsup$ is at most $c_G$, if
$A \ge \sqrt{8 \log (1/c_G)}$.
\end{proposition}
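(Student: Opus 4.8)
The plan is to follow the proof of Theorem~\ref{thm:phase-trans-graph}(i) but to replace the large-deviation bound used there by a concentration bound on the Gaussian scale, since in $\cB_G^I(n;A)$ we are looking at a deviation of order $\sqrt n$ rather than of order $n$. First I would fix $\bn \in \cB_G^I(n;A)$ and choose $\es \subsetneq F \subsetneq E$ witnessing the defining inequality, so $L^{F,n}(\bn) \le -A\sqrt n$. Using $L^{F,n}(\bn) = (a_F+b_F)\bigl(\sum_{e \in F} n_e - \tfrac{d(F)}{k} n\bigr)$ together with $a_F, b_F \le 1$ (as $\bu^F$ is a unit vector), this translates into
\eqnst{ \sum_{e \in F} n_e \;\le\; \frac{d(F)}{k}\, n \;-\; \frac{A}{2}\sqrt n ; }
in particular $\cB_G^I(n;A)$ is non-empty only once $n$ is large enough that the right-hand side is non-negative, which is all we shall need.

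Next I would invoke the combinatorial fact already used in part~(i): every drawn vertex $v$ with $\deg_F(v) = \deg_G(v)$ must be assigned to an edge of $F$, so $\sum_{e \in F} N_e(t)$ decreases by one at each such step and never increases; since winning requires $\sum_{e\in F} N_e(n) = 0$, on the winning event we have $\sum_{t=1}^n Y_t \le \sum_{e \in F} n_e$, where $Y_t = \mathbf 1[\deg_F(V_t) = \deg_G(V_t)]$ are i.i.d.\ Bernoulli with parameter $d(F)/k$. Combining this with the previous display,
\eqnst{ p_G(\bn) \;\le\; \P\Bigl[\, \sum_{t=1}^n \bigl( Y_t - \tfrac{d(F)}{k} \bigr) \le -\tfrac{A}{2}\sqrt n \,\Bigr] \;\le\; \exp\Bigl( -\frac{A^2}{8} \Bigr), }
the last step by Azuma--Hoeffding applied to the martingale $\sum_{s=1}^t ( Y_s - \tfrac{d(F)}{k})$, whose increments have absolute value at most $1$: in the form $\P[ M_n - M_0 \le -s ] \le \exp(-s^2/(2n))$ with $s = \tfrac A2 \sqrt n$ this is exactly $\exp(-A^2/8)$, in the same way Azuma--Hoeffding is used in the proof of Lemma~\ref{lem:1st-stage}.

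Since this bound depends neither on $\bn$ nor on the witnessing set $F$, and only finitely many sets $F$ occur, it holds uniformly over $\bn \in \cB_G^I(n;A)$; the first assertion follows on taking the maximum and then $\limsup_{n\to\infty}$, and the ``in particular'' clause is then immediate, as $A \ge \sqrt{8\log(1/c_G)}$ gives $A^2/8 \ge \log(1/c_G)$, i.e.\ $\exp(-A^2/8) \le c_G$. I do not anticipate a genuine obstacle here — this is the mildest of the three critical-region estimates — the only points requiring a little care being the (deliberately crude) passage from the linear functional $L^{F,n}$ to the bound on $\sum_{e\in F} n_e$, which pins down the constant $8$ (a sharper one being available), and the remark that the estimate is uniform in $\bn$ because only finitely many sets $F$ intervene.
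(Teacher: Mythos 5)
Your proof is correct, and it takes a genuinely different and simpler route than the paper. The paper's argument restricts (via Theorem~\ref{thm:phase-trans-graph}(i)) to the strip $-\delta n < L^{F,n}(\bn) \le -A\sqrt n$, runs the optimal strategy, shows $S(t) = L^{F,n-t}(\bN(t))$ is a supermartingale via~\eqref{e:S(t)-supermart}, and then applies optional stopping to the bounded martingale $p_G(\bN(t))$ at a stopping time that either exits below $-\delta(n-\cdot)$ (handled by Theorem~\ref{thm:phase-trans-graph}(i)) or survives to time roughly $n - \tfrac{A}{2\delta}\sqrt n$ (handled by Azuma). You bypass all of this: you reuse the deterministic inclusion from the proof of Theorem~\ref{thm:phase-trans-graph}(i) directly — $\{\text{win}\} \subset \{\sum_{t=1}^n Y_t \le \sum_{e \in F} n_e\}$, valid for \emph{any} strategy — and then apply Azuma--Hoeffding to the i.i.d.\ sum at the $\sqrt n$ scale rather than the $n$ scale, getting $\exp(-A^2/8)$ in one step. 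Your translation $L^{F,n}(\bn) \le -A\sqrt n \Rightarrow \sum_{e\in F} n_e \le \tfrac{d(F)}{k}n - \tfrac{A}{2}\sqrt n$ via $a_F + b_F \le 2$ is correct and is exactly what produces the constant $8$. Both arguments are sound; yours avoids the optimal-value martingale and optional stopping altogether, costs nothing in the constant, and is uniform in $\bn$ and $F$ for the trivial reason you give. The paper's heavier machinery is presumably chosen to mirror the approach needed in Proposition~\ref{prop:near-critical}, but for Proposition~\ref{prop:far-enough} itself your direct argument is the cleaner one.
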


\begin{proposition}
\label{prop:in-enough}
There exist constants $C_4 = C_4(G)$ and $\lambda_4 = \lambda_4(G) > 0$
such that for all $A \ge 1$ we have 
\eqn{e:in-enough}
{ \limsup_{n \to \infty} \, \max \{ p_G(\bn) : \bn \in \cB_G^{II}(n; A) \}
  \le c_G + C_4 \exp ( - \lambda_4 A^2 ). }
\end{proposition}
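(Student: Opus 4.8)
The plan is to show that from any configuration $\bn \in \cB_G^{II}(n;A)$ one can steer the process, with probability close to $1$, to land near a configuration that is at distance $\ge \delta$ from $\partial \cR_G$ at some intermediate scale $n'$, and then apply the definition of $c_G$ (via the limit $\alpha(\delta) = c_G$ established in the proof of Theorem \ref{thm:phase-trans-graph}(ii)) together with Proposition \ref{prop:main-steer}. The key point is that $\bn \in \cB_G^{II}(n;A)$ means $L^{F,n}(\bn) \ge A\sqrt n$ for all relevant $F$, so by \eqref{e:equiv-metric2} and \eqref{e:equiv-metric} we have $\dist(\bn/n, \partial\cR_G) \ge b' A / \sqrt n$ for a constant $b' = b'(G)$; that is, $\bn/n$ may be very close to $\partial\cR_G$, only at distance of order $A/\sqrt n$. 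So the steering of Proposition \ref{prop:main-steer}, which needs a \emph{fixed} $\delta$, does not directly apply — we must first drive the process away from $\partial\cR_G$ using only the $\sqrt n$-margin we are given.

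First I would run, for a number of steps of order $n$ (say until time $n - n'$ with $n' = \eta n$ for a small fixed $\eta$), the strategy that at each step picks $\bY(t) \in \cK_G$ so as to push the scaled state $\bX(t)$ directly away from $\partial\cR_G$: concretely, aim $\bY(t)$ toward the interior point $\bx^*$ from \eqref{e:x^*}. For each $F$ with $0 < d(F) < k$, the quantity $L^{F,n}(\bN(t))$ then has, by the computation \eqref{e:drift1}, a strictly positive conditional drift (of size bounded below by $\kappa$ times a constant depending on the geometry and on $\dist(\bX(t),\partial\cR_G)$, hence bounded below as long as $\bX(t)$ is not yet at distance $\ge \delta$), plus a bounded-increment martingale part. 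So $L^{F,n}(\bN(t))$ behaves like a random walk started at height $\ge A\sqrt n$ with positive drift; by the Azuma–Hoeffding inequality (as used in Lemma \ref{lem:1st-stage}), the probability that it ever drops below, say, $\tfrac12 A\sqrt n$ before time $n - n'$ is at most $C\exp(-\lambda A^2)$ — this is the source of the $\exp(-\lambda_4 A^2)$ term. Meanwhile the positive drift, accumulated over order $n$ steps, carries $\bX(t)$ to distance $\ge \delta$ from $\partial\cR_G$ for some fixed $\delta = \delta(G,\eta) > 0$ (choosing $\eta$ small and $n$ large), on the same high-probability event. One must be slightly careful to combine the estimates over the finitely many sets $F$, but that only costs a factor $|2^E|$ in the constant.

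Conditioned on this good event, at time $n - n'$ we are in a state $\bn''$ with $\dist(\bn''/(n\cdot\text{something}),\partial\cR_G) \ge \delta$ — more precisely I would arrange that the remaining time is $n'$ and that $\bn'' = n'\bx''$ with $\dist(\bx'',\partial\cR_G)\ge\delta$, up to an $O(1)$ correction absorbed as in the third stage of Proposition \ref{prop:main-steer}. Then $p_G(\bn'') \le M_{n'}(\delta)$, and by \eqref{e:limits} we have $M_{n'}(\delta) \to c_G$ as $n'\to\infty$; since $n' = \eta n \to \infty$ this gives $p_G(\bn'') \le c_G + o(1)$. Putting the pieces together,
\eqnst
{ p_G(\bn)
  \le \P[\text{bad event}] + \max_{\bn''} p_G(\bn'')
  \le C\exp(-\lambda A^2) + c_G + o(1), }
and taking $\limsup_{n\to\infty}$ yields \eqref{e:in-enough} with $C_4 = C$, $\lambda_4 = \lambda$. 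The main obstacle, and the step needing the most care, is the first one: quantifying that the outward drift of $L^{F,n}$ really is bounded below by a positive constant for \emph{all} $F$ simultaneously throughout the first stage, uniformly in the starting point within $\cB_G^{II}$, while the state is still near $\partial\cR_G$ — this is where the relationship between $\cK_G$ and available controls (Lemma \ref{lem:interpret}) and the uniform geometry of Lemma \ref{lem:equiv-metric} do the work, but assembling them into a single supermartingale argument valid until the state has genuinely escaped to depth $\delta$ requires some bookkeeping.
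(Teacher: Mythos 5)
There are several fundamental problems with this approach, and the paper's proof of this proposition goes the opposite way around.

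The most basic issue is the direction of the value function's (super)martingale property. Under \emph{any} strategy, $p_G(\bN(t))$ is a supermartingale (and a martingale only under the optimal one), so optional stopping for the steering strategy you describe gives the \emph{lower} bound $p_G(\bn) \ge \E[p_G(\bN(\tau))]$, never the displayed upper bound $p_G(\bn) \le \P[\text{bad}] + \max_{\bn''} p_G(\bn'')$. Your argument has the logic backwards. The paper circumvents this by choosing an \emph{interior} configuration $\bn$ with $p_G(\bn) \le c_G + \eps$ and steering \emph{from} there towards a nested chain of near-critical configurations $\bn(\ell), \dots, \bn(1)$, each with $p_G(\bn(i))$ close to $\beta = \limsup_n \max p_G$ and each hit exactly with probability bounded away from zero; the resulting inequality $c_G + \eps \ge p_G(\bn) \ge (\beta - \eps)\bigl(1 - e^{-c\ell}\bigr) - C\ell e^{-\lambda A^2} - \dots$ is then read as an upper bound on $\beta$, with $\ell$ chosen of order $A^2$.

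Beyond the logical direction, the drift claim is also wrong: by \eqref{e:S(t)-supermart}, $L^{F,n-t}(\bN(t))$ has conditional drift $-\langle \bY(t) - \bz^F, \bu^F\rangle \le 0$ for every admissible $\bY(t) \in \cK_G$, i.e.\ it is a supermartingale under \emph{every} strategy; no choice of moves can make it a submartingale. What does drift outward is the rescaled $\sum_{e\in F}X_e(t) - d(F)/k$, but only at rate $O\bigl(1/(n-t)\bigr)$, so converting an initial margin $\sim A/\sqrt n$ into a fixed margin $\delta$ requires $n-t$ to become polynomially small in $n$ (as in Proposition \ref{prop:strengthened}), not just $\eta n$. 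Finally, $\cB_G^{II}(n;A)$ only constrains $L^{F,n}$ for $F$ with $0 < d(F) < k$; for $F$ with $d(F) = 0$, coordinates $n_e$ can be $0$ or $O(1)$, so $\bn/n$ may sit essentially on the face of $\partial \cR_G$ contained in $\partial \cS_G$ and the hypothesis of Proposition \ref{prop:strengthened} (which needs a margin for \emph{all} $F$) fails. Handling exactly this degeneracy is the source of the elaborate subgraph machinery ($G^H$, the sets $F_0$, the configurations $\by^{*,F_0}(\delta;H)$, and the requirement to hit small coordinates exactly) in the paper's proof, none of which your sketch addresses.
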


\begin{proposition}
\label{prop:near-critical}
There exists $A_0 = A_0(G)$ such that for all $A \ge A_0$ we have
\eqnst
{ \limsup_{n \to \infty} \, \max \{ p_G(\bn) : \bn \in \cB_G^{III}(n; A) \}
  \le c_G + C_4 \exp ( - \lambda_4 A^2 ). }
\end{proposition}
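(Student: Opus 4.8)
The plan is to reduce the near-critical case (region $\cB_G^{III}$) to the interior case (Proposition \ref{prop:in-enough}) by showing that from any configuration $\bn$ with all $L^{F,n}(\bn)$ of order at most $A\sqrt n$, the optimal policy cannot do much better than to wait until, for the first time, the configuration enters either $\cB_G^I$ (where we lose, by Proposition \ref{prop:far-enough}) or enters $\cB_G^{II}(m; A')$ at some later time $m$, with $A'$ comparable to $A$ (where Proposition \ref{prop:in-enough} applies). The point is that near $\partial\cR_G$ the quantities $\min_F L^{F,n}(\bN(t))$ behave, up to bounded drift terms, like random walks with increments of order $1$ started at height $O(A\sqrt n)$, so over the remaining time they will typically wander by an amount comparable to their starting height, i.e.\ of order $A\sqrt n$; the chance of staying in the strip $(-A\sqrt n, A\sqrt n)$ for a long time while eventually reaching $\bzero$ is controlled.

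Concretely, first I would fix a large auxiliary constant $A_1 = A_1(G)$ (to be chosen as a multiple of $A$) and consider the stopping time $\sigma = \inf\{ t : \bN(t)/(n-t) \notin \cR_G \text{ or } \min_{F} L^{F,n-t}(\bN(t)) \ge A_1\sqrt{n-t}\}$. On $\{\bN(\sigma)/(n-\sigma) \notin \cR_G\}$ one is essentially in region $\cB_G^I$ for the rescaled process, and the analysis behind Proposition \ref{prop:far-enough} — which only uses that once some $L^{F}$ is negative it remains a supermartingale with negative drift, by convexity of $\cK_G$ and Eqn.~\eqref{e:drift1} — gives that the conditional win probability from there is $o(1)$ (in fact exponentially small in $A_1$, after taking $n\to\infty$). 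On $\{\min_F L^{F,n-\sigma}(\bN(\sigma)) \ge A_1\sqrt{n-\sigma}\}$ the configuration at time $\sigma$ lies in $\cB_G^{II}(n-\sigma; A_1)$, so by the Markov property and Proposition \ref{prop:in-enough} the conditional win probability is at most $c_G + C_4 e^{-\lambda_4 A_1^2} + o(1)$, uniformly over the starting configuration. What remains is to bound $\P[\sigma = n]$, i.e.\ the probability that under the optimal policy the process reaches $\bN(n) = \bzero$ without ever leaving the strip: on $\{\sigma = n\}$ we have $\bN(n) = \bzero$, hence $L^{F,0}(\bzero) = 0$ for every $F$, while at time $0$ we may assume $\min_F L^{F,n}(\bn) \ge -A\sqrt n$ and the process stayed in $(-A_1\sqrt{\cdot}, A_1\sqrt{\cdot})$ throughout; using Lemma \ref{lem:equiv-metric} to pass between $L^{F,n}$ and the linear coordinates, and the fact that each $\sum_{e\in F}N_e(t) - (n-t)d(F)/k$ changes by a bounded amount per step with a non-negative conditional drift whenever it is small (again by $\bY(t)\in\partial\cR_G \subset \cK_G$), I would apply the Azuma–Hoeffding bound as in Lemmas \ref{lem:1st-stage}–\ref{lem:2nd-stage} over a suitable late time window to show $\P[\sigma = n]$ is exponentially small in $A^2$. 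Assembling the three contributions with $A_1$ a large enough multiple of $A$ gives the claimed bound $c_G + C_4 e^{-\lambda_4 A^2}$ (after relabelling constants), provided $A \ge A_0(G)$.

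The main obstacle I expect is the estimate on $\P[\sigma = n]$: one has to argue simultaneously about the optimal (adversarial) policy and about the event that a $k$-dimensional constrained process reaches the origin while staying in a thin strip. The cleanest route is to isolate a single well-chosen set $F$ — for instance the $F$ attaining $\min_F L^{F,n}(\bn)$ at time $0$, or more robustly to union-bound over the finitely many $F$ — and to track only the one-dimensional quantity $L^{F,n-t}(\bN(t))$, which is a submartingale (up to $O(1)$ drift) independently of the policy because any vertex of full degree in $F$ must be placed inside $F$. One then needs the combinatorial input that reaching $\bzero$ forces $L^{F}$ to return to $0$, and a reversed-time or maximal-inequality argument to say a submartingale started at height $\le A\sqrt n$ and confined to $[-A_1\sqrt{\cdot}, A_1\sqrt{\cdot}]$ is unlikely to spend $\Omega(n)$ steps there; since increments are $O(1)$ and there are $\Theta(n)$ steps while the confinement scale is only $O(A_1\sqrt n)$, this is a standard crossing estimate, but care is needed because the confinement width shrinks as $t\to n$. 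A convenient fix is to stop the analysis at time $n - \lceil (A_1\sqrt n)^2 \rceil = n - \Theta(A_1^2 n)$, absorbing the last $\Theta(A_1^2)$-fraction of steps into a trivial bound — but one must then check $A_1^2 = o(n)$, which holds since $A$ is fixed. I would also double-check that the error terms arising from the $O(1)$ offsets $\bn = n\bx + O(1)$ and from Lemma \ref{lem:equiv-metric}'s constants $b, B, \kappa$ do not degrade the exponent, which they do not since they only shift $A$ by a $G$-dependent constant factor, accounted for by taking $A_0(G)$ large.
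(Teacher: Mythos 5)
Your high-level plan---run the optimal policy until the process first exits the critical strip on one side or the other, and then invoke Propositions \ref{prop:far-enough} and \ref{prop:in-enough} on the respective exit events---is a genuinely different route from the paper's. The paper instead slices the strip into roughly $A/a$ thin sub-strips of width $a\sqrt{n-t}$, applies optional stopping of the bounded martingale $p_G(\bN(t))$ in each slice, and derives a three-term recursion $\beta(k) \le c(k)\beta(k-1) + d(k)\beta(k) + e(k)\beta(k+1)$ with $c(k) \ge 1/4$ uniformly; the boundary conditions $\beta(-k_\maxim-2)\le\eps$ and $\beta(k_\maxim+1)\le c_G+\eps$ (from Propositions \ref{prop:far-enough} and \ref{prop:in-enough}) are then propagated across all slices by a discrete maximum principle.

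There is a genuine gap in your treatment of $\P[\sigma=n]$, which you correctly identify as the crux. Your ``convenient fix'' of truncating at time $n - \lceil(A_1\sqrt n)^2\rceil$ is vacuous, since $(A_1\sqrt n)^2 = A_1^2 n \ge n$ for $A_1\ge 1$. The underlying problem is that the strip width $A_1\sqrt{n-t}$ is of the same order as (in fact larger than) the diffusive scale over the remaining $n-t$ steps, so a single Azuma/crossing estimate gives at best a polynomial bound in the ratio of starting height to $\sqrt{\text{elapsed time}}$---this is precisely what Lemma~\ref{lem:super-exit} yields, and it is why the paper iterates over thin slices rather than treating the whole strip at once. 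Indeed the paper never proves any exponential-in-$A^2$ bound on staying in the strip: the exponential factor in the proposition comes from Proposition~\ref{prop:in-enough} applied at the outermost slice, and is merely propagated inward by the recursion; the per-slice stuck probabilities $d_n(k)$ only need to be small (of order $\sqrt a$), not exponentially small. Two further issues: each $L^{F,n-t}(\bN(t))$ is a \emph{supermartingale} by Eqn.~\eqref{e:S(t)-supermart}, with \emph{non-positive} drift because $\bY(t)\in\cK_G$, not a submartingale with non-negative drift as you write; and for the exit events you implicitly need $n-\sigma$ bounded below by a constant times $n$ to invoke Propositions~\ref{prop:far-enough} and~\ref{prop:in-enough} uniformly, which is why the paper's $\tau_k$ carries an explicit deterministic cutoff ($\tau_k\le\tfrac34 n$) and uses $\overline{M}_{n/4}(\cdot)$ in the recursion, whereas your $\sigma$ has no such control.
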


\begin{proof}[Proof of Theorem \ref{thm:bdry} assuming 
Propositions \ref{prop:far-enough},
\ref{prop:in-enough}, \ref{prop:near-critical}.]
Given $\eps > 0$, choose $A$ sufficiently large so that 
each of the upper bounds in Propositions \ref{prop:far-enough},
\ref{prop:in-enough} and \ref{prop:near-critical}
is at most $c_G + \eps$. Since with this fixed choice of $A$ 
the sets $\cB_G^I$, $\cB_G^{II}$ and $\cB_G^{III}$ 
cover all possibilities, the statement follows.
\end{proof}

\subsection{Upper bound for $\cB_G^{I}$}
\label{ssec:I}

\begin{proof}[Proof of Proposition \ref{prop:far-enough}.]
We may fix the set $F$ in the definition of $\cB_G^I(n; A)$
and argue separately for each such set. Let us fix $\delta > 0$.
Due to Theorem \ref{thm:phase-trans-graph}(i), 
we may restrict to $\bn$ such that 
\eqnst
{ - \delta n 
  < L^{F,n}(\bn) 
  \le - A \sqrt{n}. }
Let us follow the optimal strategy starting in
configuration $\bn$. The process $S(t) = L^{F,n-t} (\bN(t))$
is a supermartingale due to
\eqn{e:S(t)-supermart}
{ \E [ S(t+1) \,|\, \cF_t ]
  = S(t) - \langle \bY(t) - \bz^F, \bu^F \rangle
  \le S(t). }
Consider the stopping time
\eqnst
{ \tau
  = \left( \lfloor n - c \sqrt{n} \rfloor + 1 \right) \, \wedge \, 
    \inf \{ t \ge 0 : S(t) < -\delta(n-t) \}, }
where $c = \frac{A}{2 \delta}$. Then we have
\eqnsplst
{ \P [ \tau > n - c \sqrt{n} ]
  &\le \P \left[ \max_{0 \le t \le \lfloor n - c\sqrt{n} \rfloor} S(t) - S(0) > (A - \delta c ) \sqrt n \right] \\
  &\le \exp \left( -\frac{1}{2} \frac{(A - \delta c)^2 \, n}{\lfloor n - c \sqrt{n} \rfloor} \right)
  \le \exp \left( - \frac{A^2}{8} \right). }
Due to the optimality equation, $p_G(\bN(t))$ is a bounded martingale.
Hence by optional stopping we have
\eqnspl{e:OST}
{ p_G(\bn)
  &= \E [ p_G(\bN(\tau)) ;\,  \tau \le n - c \sqrt{n},\, S(\tau) < - \delta (n - \tau) ]
    + \E [ p_G(\bN(\tau)) ;\, \tau > n - c \sqrt{n} ], }
The first term in the right hand side of \eqref{e:OST} is at most
\eqnst
{ \max \left\{ p_G(\bn') : \| \bn' \|_1 \ge c \sqrt{n},\, L^{F,n'}(\bn') < -\delta n' \right\}, }
which goes to $0$, as $n \to \infty$, due to Theorem \ref{thm:phase-trans-graph}(i). 
The second term in the right hand side of \eqref{e:OST} is
at most $\P [ \tau > n - c \sqrt{n} ] \le \exp ( - \frac{A^2}{8} ) < c_G$, due to our choice
of $A$. This completes the proof of the Proposition.
\end{proof}

\subsection{Upper bound for $\cB_G^{II}$}
\label{ssec:II}

We start with two propositions that strengthen 
Proposition \ref{prop:main-steer}, and will be used in the proof of
Proposition \ref{prop:in-enough}. In the first, we give a lower
bound on the probability that the process can be steered away 
from the boundary, if at least order $\sqrt{n}$ away.

\begin{proposition}
\label{prop:strengthened}
There exist $\lambda_5 = \lambda_5(G) > 0$, $\gamma = \gamma(G) > 0$, 
$c_5 = c_5(G)$, $C_5 = C_5(G)$ and $n'_0 = n'_0(G)$ such that for 
all $A \ge 1$ the following holds. Let $n, n'$ satisfy
$n^{\gamma} \ge n' \ge n'_0$, and let $\bn = n \bx$ be a
configuration such that 
\eqnspl{e:assump}
{ \sum_{e \in F} x_e 
  &\ge \frac{1}{k} d(F) + \frac{A}{\sqrt{n}}, \quad 
    \text{for all $\es \subsetneq F \subsetneq E$.} }
There exists a randomized strategy starting from $\bn$ such that 
for the stopping time 
\eqnst
{ \tau 
  = \inf \{ t \ge 0 : \dist(\bX(t), \partial \cR_G) \ge c_5 \} }
we have
\eqnst
{ \P [ \tau > n - n' ]
  \le C_5 \exp ( -\lambda_5 A^2 ). }
\end{proposition}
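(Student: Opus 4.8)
The plan is to prove Proposition~\ref{prop:strengthened} by running the process under a ``steering inwards'' strategy — similar to the first stage of Lemma~\ref{lem:1st-stage}, where at each step we choose $\bY(t)$ so that $\bX(t)$ is pushed in the direction of the interior point $\bx^*$ (or of any fixed point $\bz^*$ with $\dist(\bz^*, \partial\cR_G) \ge 2c_5$ for a suitable small $c_5$). The point is to get quantitative control on how quickly $\dist(\bX(t), \partial\cR_G)$ grows from an initial value of order $A/\sqrt n$ up to a fixed constant $c_5$. By Lemma~\ref{lem:equiv-metric}, it suffices to track the linear functionals $L^{F,n-t}(\bN(t))$ for each $\es \subsetneq F \subsetneq E$: the assumption \eqref{e:assump} gives $L^{F,n}(\bn) \ge \kappa A\sqrt n$ by \eqref{e:equiv-metric2}, and the target $\dist(\bX(t),\partial\cR_G) \ge c_5$ translates (up to constants) into all the $L^{F,n-t}(\bN(t))$ being at least a constant multiple of $(n-t)$.

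First I would fix $F$ and analyse $S^F(t) = L^{F,n-t}(\bN(t))$. As in \eqref{e:S(t)-supermart}, $\E[S^F(t+1)\mid\cF_t] = S^F(t) - \langle \bY(t) - \bz^F, \bu^F\rangle$, and since the steering strategy keeps $\bX(t)$ (hence $\bY(t)$) bounded away from $H_F$ in a way that can be quantified, there is a drift $\langle \bY(t)-\bz^F,\bu^F\rangle \ge \mu'(n-t)\,[\text{current distance}]$ — that is, $S^F(t) + c\,t$ is a \emph{submartingale}, pushing $S^F$ upward at a rate proportional to the remaining time, until $\dist(\bX(t),\partial\cR_G)$ reaches $c_5$. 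Up to the stopping time $\tau$ the only way to fail to reach level $c_5$ by time $n-n'$ is for one of the martingale parts $S^F(t) - \E[\cdots]$ (which has bounded increments) to have a large downward fluctuation overcoming the deterministic drift. The event $\{\tau > n - n'\}$ is then contained in a union over $F$ of events of the form ``a bounded-increment martingale, started from value $\ge \kappa A\sqrt n$, drops by more than a constant times its starting value before time $\sim n - n^\gamma$'' (plus a symmetric event for the orthogonal components $\bX^\ort$ staying small, controlled exactly as in Lemma~\ref{lem:1st-stage}). Azuma--Hoeffding over the time interval of length $\Theta(n)$, started from height $\Theta(A\sqrt n)$, gives a bound of the shape $\exp(-c A^2 n / n) = \exp(-cA^2)$, which is the desired $C_5\exp(-\lambda_5 A^2)$. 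The condition $n' \le n^\gamma$ is what guarantees the time available, $n - n'$, is still of order $n$, so that the $\sqrt n$-sized head start converts into an $A^2$ in the exponent rather than being washed out.

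The main obstacle I expect is handling the regime where $\dist(\bX(t),\partial\cR_G)$ is still small (of order $A/\sqrt n$ up to a constant), where the ``multiplicative'' drift estimate $\langle\bY(t)-\bz^F,\bu^F\rangle \ge \mu'(n-t)\dist(\bX(t),\partial\cR_G)$ gives only a weak push, so the process could linger near the boundary — and near the boundary the coordinates can become negative, at which point $\bX(t)$ leaves $\cS_G$ and several geometric statements break. I would deal with this either by a two-phase argument (a short initial phase analysed with a purely additive lower bound on the drift, $\langle\bY(t)-\bz^F,\bu^F\rangle \ge \delta_0$ once $\dist \ge$ const$/\sqrt n$, using the slack $A\sqrt n$ directly via Azuma to reach a fixed small constant distance, then invoking the already-proven machinery of Lemmas~\ref{lem:1st-stage}--\ref{lem:2nd-stage} or Proposition~\ref{prop:main-steer} for the rest), or by noting, as in the Remark after Lemma~\ref{lem:2nd-stage}, that the strategy does not require positivity and that negativity of a coordinate is itself a rare event absorbed into the $\exp(-\lambda_5 A^2)$ bound. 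A secondary technical point is choosing $c_5$ small enough (depending only on $G$) that a single fixed interior point $\bz^*$ is simultaneously at distance $\ge 2c_5$ from $\partial\cR_G$ and the steering-toward-$\bz^*$ drift is uniformly bounded below on $\{\dist(\cdot,\partial\cR_G) \le c_5\}$; this is a compactness argument using Lemma~\ref{lem:prop-R_G} and the openness of $\cR_G$ in $H_E$.
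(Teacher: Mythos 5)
The paper's proof does \emph{not} use a single-shot Azuma bound; it uses a multi-scale doubling scheme. It fixes the point $\by$ where the ray from $\bx^*$ through $\bx$ hits $\partial\cR_G$, sets $d = |\bx - \by| \gtrsim A/\sqrt{n}$, and defines a geometric sequence of intermediate targets $\by(i) = \by + (3/2)^i(\bx - \by)$, $i = 0,\dots,r$, at distances $(3/2)^i d$ from the boundary. Stage $i$ steers from a small neighbourhood of $\by(i)$ to a small neighbourhood of $\by(i+1)$ using a constant fraction $(\tfrac12+\eta)$ of the currently remaining time, with failure probability $\exp(-\lambda(3/2)^{2i}A^2)$; summing this geometric series gives $C\exp(-\lambda A^2)$, and after $r$ stages (where $(\tfrac12-\eta)^r n \ge n^\gamma$) the process is at a fixed macroscopic distance $c_5$ from $\partial\cR_G$. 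Your single-shot plan does not reproduce this, and I think it cannot, for two reasons.

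First, there is a sign error in your drift analysis. By \eqref{e:S(t)-supermart}, for \emph{every} admissible strategy and every $\es \subsetneq F \subsetneq E$ the process $S^F(t) = L^{F,n-t}(\bN(t))$ is a \emph{super}martingale: $\E[S^F(t+1)\mid\cF_t] = S^F(t) - \langle\bY(t)-\bz^F,\bu^F\rangle \le S^F(t)$ since $\bY(t)\in\cK_G$. The control can never push $L^{F,\cdot}$ up in expectation; at best it can choose $\bY(t)\in H_F$ and make $S^F$ a martingale. The reason $\dist(\bX(t),\partial\cR_G) \sim S^F(t)/(n-t)$ grows is that the denominator shrinks, not that $S^F$ has positive drift. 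Your claim that ``$S^F(t)+ct$ is a submartingale, pushing $S^F$ upward'' is therefore false, and your proposed two-phase fix with ``additive lower bound $\langle\bY(t)-\bz^F,\bu^F\rangle \ge \delta_0$'' is backwards: such a bound would make $S^F$ decrease by $\delta_0$ per step and wipe out the $O(A\sqrt n)$ surplus after $O(A\sqrt n)\ll n$ steps. What one actually needs is to keep $\langle\bY(t)-\bz^F,\bu^F\rangle$ \emph{small}, and this cannot be done simultaneously for all $F$ by a fixed-direction aim.

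Second, even granting a correct drift picture, a one-shot steering over the full horizon $n - n'$ with $n' \le n^\gamma$ cannot keep the orthogonal component $\bX^\ort(t)$ under control. Its increments are of size $\Theta(1/(n-t))$, so its accumulated fluctuations over the interval $[0, n-n']$ are $\Theta(1/\sqrt{n'})$, which is not $o(1)$ and dwarfs the initial perpendicular distance $\Theta(A/\sqrt n)$. Once $\bX^\ort$ is comparable to the perpendicular distance, the aim direction is wrong and the drift bound \eqref{e:drift-lb} on which the Lemma~\ref{lem:1st-stage}-style Azuma argument rests simply does not hold. The geometric doubling scheme is precisely what fixes this: at stage $i$ the remaining time is $\sim (\tfrac12-\eta)^i n$, so the orthogonal fluctuations are $\sim (\tfrac12-\eta)^{-i/2} n^{-1/2}$, while the target distance is $\sim (3/2)^i A n^{-1/2}$; the ratio grows like $\bigl[(3/2)\sqrt{\tfrac12-\eta}\,\bigr]^i A$, which is $>1$ and increasing precisely because $\eta$ is chosen so that $\tfrac12-\eta > \tfrac49$. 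That choice of $\eta$, and the geometric structure it supports, is the key idea missing from your writeup.
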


\begin{proof}
Let $\by$ be the point where the halfline
starting at $\bx^*$ and passing through $\bx$ intersects $\partial \cR_G$. 
Write $d = | \bx - \by |$, and note that $d \ge \frac{A}{B} \frac{1}{\sqrt{n}}$,
due to Lemma \ref{lem:equiv-metric}.
Let $r$ be the smallest integer such that $(3/2)^r d \ge \frac{1}{2} | \bx^* - \by |$.
We fix a small number $\eta > 0$ such that $\frac{1}{2} - \eta > \frac{4}{9}$.
Then it is straightforward to check that the choice of $r$ ensures that
there exists $0 < \gamma = \gamma(G) < 1$ such that 
$(\frac{1}{2} - \eta)^r n \ge n^{\gamma}$, if $n \ge n_0$ for 
some $n_0 = n_0(G)$.

Consider the sequence of points $\bx = \by(0), \by(1), \dots, \by(r)$ defined by
\eqnst
{ \by(i)
  = \by + (3/2)^i (\bx - \by), \quad i = 0, 1, \dots, r. }
The following statement can be proved in essentially the same way as 
Lemma \ref{lem:1st-stage}. For $\eps > 0$ sufficiently small, there 
exists $\lambda = \lambda(G,\eta,\eps) > 0$ such that given any point $\bw \in \cR_G$
with $|\bw - \by(i)| < \eps (3/2)^i d$ and any $n$ such that 
$(\frac{1}{2} - \eta) n \ge n_0$ the following holds. 
There exists a randomized strategy starting in state $n \bw$ such that 
for the stopping time 
\eqnst
{ \tau(i)
  = \inf \{ t \ge 0 : |\bX(t) - \by(i+1)| < \eps (3/2)^{i+1} d \} }
we have
\eqnst
{ \P \left[ \tau(i) > \left( \frac{1}{2} + \eta \right) n \right] 
  \le \exp \left( - \lambda (3/2)^{2i} A^2 \right). }
Summing the upper bounds on $\tau(0), \tau(1), \dots, \tau(r-1)$ we obtain
that there is a randomized strategy starting from state $\bn$ such that 
for the stopping time 
\eqnst
{ \tau'
  = \inf \{ t \ge 0 : |\bX(t) - \by(r)| < \eps (3/2)^{r} d \} }
we have
\eqnst
{ \P [ \tau' > n - n^{\gamma} ]
  \le C \exp ( - \lambda A^2 ). }
Due to the choice of $r$, and for a sufficiently small $\eps$,
the point $\bX(\tau')$ is at least a fixed positive distance $c_5$ 
from $\partial \cR_G$, and hence $\tau \le \tau'$. This completes
the proof.
\end{proof}

The next proposition extends the result of Proposition \ref{prop:main-steer} 
to the case when the target state is anywhere in $\cK_G$.

\begin{proposition}
\label{prop:hit-any}
Given $\delta > 0$, there exists $\lambda_6 = \lambda_6(G) > 0$, $C_6 = C_6(G)$, 
$c_6 = c_6(G) > 0$, $K_6 = K_6(G,\delta)$ and $n_6 = n_6(G,\delta)$ such that for any 
$n_1 \ge K_6 n'$, $n' \ge n_6$ and configurations $\bn_1 = n_1 \bx$, $\bx \in \cR_G$,
$\dist(\bx, \partial \cR_G) \ge \delta$ and $\bn' = n' \bz$, $\bz \in \cK_G$
the following holds. There exists a randomized strategy starting in state 
$\bn_1$ such that 
\eqn{e:exactly2}
{ \P \left[ \bN(n_1 - n') = \bn' \right]
  \ge c_6, }
and
\eqn{e:not-deviate2}
{ \P \left[ | \bN(n_1 - n') - \bn' | > q \right]
  \le C_6 \exp ( - \lambda_6 q ), \quad q > 0. }
\end{proposition}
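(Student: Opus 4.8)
The plan is to reduce Proposition~\ref{prop:hit-any} to the already-proven Proposition~\ref{prop:main-steer} by an \emph{overshoot} device. The difficulty is that the target $\bn' = n'\bz$ may lie on $\partial\cR_G$, where the steering of Proposition~\ref{prop:main-steer} breaks down: the available controls degenerate near $\partial\cR_G$, and a direct ``descent to the boundary'' over order $n'$ steps would leave fluctuations of order $\sqrt{n'}$, not $O(1)$. To circumvent this, recall the interior point $\bx^*$ of~\eqref{e:x^*}, put $\delta^* := \dist(\bx^*,\partial\cR_G)>0$ (a constant of $G$), fix $C = 2$, and aim not for $\bn'$ but for the strictly interior state $m\bz_2 := \bn' + (C-1)n'\bx^*$ at the intermediate scale $m := Cn'$, where $\bz_2 := \tfrac1C\bz + (1-\tfrac1C)\bx^*$ has $\dist(\bz_2,\partial\cR_G) \ge \tfrac{C-1}{C}\delta^*$ by convexity. (Throughout I would replace $\bx^*$, $\bz_2$ by nearby rationals so that all relevant vectors are integral, absorbing the $O(1)$ discrepancies into constants and into the final $O(1)$ steps; I suppress this.)

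First I would run Proposition~\ref{prop:main-steer} with parameter $\delta' := \min(\delta, \tfrac12\delta^*)$ to steer from $\bn_1 = n_1\bx$ to the neighbourhood of $m\bz_2$ over the time interval $[0, n_1 - m]$. This is legitimate once $n_1 \ge (1+K_1(G,\delta'))m$ and $m$ is large, which I arrange by setting $K_6 := (1+K_1(G,\delta'))\,C$ and taking $n_6 = n_6(G,\delta)$ large. Using only the deviation estimate~\eqref{e:not-deviate}, the state $\bN(n_1-m)$ lies within $\rho m$ of $m\bz_2$ --- with $\rho$ a small constant of $G$, chosen so that $(C-1)n'\bx^* + \br$ has direction within $\tfrac14\delta^*$ of $\bx^*$ whenever $|\br|\le\rho m$ --- except on an event $\mathcal{E}$ of probability $\le C_1\exp(-\lambda_1\rho m)$, which is $\le \tfrac12$ for $n'\ge n_6$.

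On $\mathcal{E}^c$, set $\bb(s) := \bN(n_1 - m + s) - \bn'$ for $0\le s\le (C-1)n'$. Since $\bn'$ is a fixed vector, $\bb(\cdot)$ evolves by exactly the game dynamics, with $\bb(0) = (C-1)n'\bx^* + \br$ of direction within $\tfrac14\delta^*$ of $\bx^*$, hence at distance $\ge\tfrac12\delta^*$ from $\partial\cR_G$; and by the Remark following Lemma~\ref{lem:2nd-stage} we may allow the coordinates of $\bb$ to become negative, as only the terminal value $\bb((C-1)n') = \bN(n_1-n') - \bn'$ is relevant. I would then steer $\bb(\cdot)$ to $\bzero$ in two sub-stages, both with constants depending on $G$ only: apply Proposition~\ref{prop:main-steer} once more (now with the $G$-constant parameter $\tfrac12\delta^*$) to hit a fixed small state $n_{00}\bx^*$ exactly with probability $\ge c_1$ and with exponential tails on the deviation, valid once $n'$ is large, where $n_{00} = n_{00}(G)$ is chosen so that $n_{00} \ge n_0(G,\tfrac12\delta^*)$ and $p_G(n_{00}\bx^*)>0$ (positivity as at the close of the proof of Theorem~\ref{thm:phase-trans-graph}(ii)); then over the last $n_{00}$ steps play the optimal strategy for the $\bb$-game, reaching $\bzero$ from $n_{00}\bx^*$ with probability $p_G(n_{00}\bx^*)$.

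Finally I would assemble these into a single randomized strategy from $\bn_1$ and read off the conclusions. For~\eqref{e:exactly2}: $\mathcal{E}^c$ has probability $\ge\tfrac12$, and on it the exact-hitting bounds above yield $\bN(n_1-n') = \bn'$ with conditional probability $\ge c_1\,p_G(n_{00}\bx^*)$, so $c_6 := \tfrac12\,c_1\,p_G(n_{00}\bx^*)$ works, a constant of $G$. For~\eqref{e:not-deviate2}: $\bN(n_1-n')-\bn' = \bb((C-1)n')$ and $|\bb|$ changes by at most $2$ per step, so on $\mathcal{E}^c$ the exponential tail from the sub-stage steering (shifted by the $O(n_{00})$ change in the last steps) controls $\P[|\bb((C-1)n')|>q]$ with $G$-only constants, while on $\mathcal{E}$ --- of probability exponentially small in $n'$ --- we use the deterministic bound $|\bb((C-1)n')| \le D(G)n'$ to absorb the contribution after adjusting $\lambda_6, C_6$. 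Since $\delta$ enters only through $K_6$ and $n_6$, the constants $c_6,\lambda_6,C_6$ depend on $G$ alone. The one genuinely non-routine point is the one flagged at the start: converting ``reach the boundary point $\bn'$ exactly'' into ``reduce the interior configuration $(C-1)n'\bx^*$ to $\bzero$'', i.e.\ into a fresh instance of the winning problem deep inside $\cR_G$, to which Proposition~\ref{prop:main-steer} and Theorem~\ref{thm:phase-trans-graph} apply directly.
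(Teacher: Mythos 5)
Your argument is correct and is essentially the paper's own: both aim for an intermediate state of the form $\bn' + (\text{interior configuration of mass} \sim n')$, then observe that the shifted process $\bb(\cdot) = \bN(\cdot) - \bn'$ is itself a copy of the game dynamics started well inside $\cR_G$, so the existing steering machinery (Proposition~\ref{prop:main-steer} / Lemma~\ref{lem:2nd-stage}, with the Remark that negative coordinates of the phantom process are permitted) can drive $\bb$ to $\bzero$. The only cosmetic differences are that you use the canonical interior point $\bx^*$ as the cushion direction and finish the last $n_{00}$ steps by invoking $p_G(n_{00}\bx^*)>0$, whereas the paper uses $\bx'' = \tfrac12\bx + \tfrac12\bz$ as its intermediate direction and finishes by re-running the tail of Lemma~\ref{lem:2nd-stage} together with the explicit exact-hitting phase of Proposition~\ref{prop:main-steer}.
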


\begin{proof}
We consider the following intermediate point:
\eqnsplst
{ \bx''
  = \frac{1}{2} \bx + \frac{1}{2} \bx' \qquad \text{ and } \qquad
  \bn''
  = n' \bx + \bn' + O(1), }
where the $O(1)$ term guarantees that $\bn''$ has integer coordinates. 
Observe that $\dist( \bx'', \partial \cR_G)$
is at least a positive constant. Due to Proposition \ref{prop:main-steer}
we can steer the process from $\bn_1$ to a $(\delta/4)$-neighbourhood 
of $\bx''$ with probability at least $1 - C_1 \exp ( - \lambda_1 n' )$,
provided $K_6 \ge 2 K_1(G,\delta)$.
Let us call the point reached this way $(2 n') \by''$. Since
\eqnst
{ \by''
  = \bx'' + (\by'' - \bx'') 
  = \frac{1}{2} (\bx - 2(\by'' - \bx'')) + \frac{1}{2} \bx', }
and $| 2 (\by'' - \bx'') | < \frac{\delta}{2}$,
the point $\bw = \bx - 2(\by'' - \bx'')$ satisfies
$\dist( \bw, \partial \cR_G ) \ge \frac{\delta}{2}$. 

Now consider the steps of the strategy of Lemma \ref{lem:2nd-stage}
for the starting state $n' \bw$ and target state $0 \bw$ over
the time interval $[0,n' - M q_0]$, where 
$M \ge ( \min \{ w_e : e \in E \} )^{-1}$, and $q_0$ 
is chosen so that $F(q_0/4) \ge \frac{1}{2}$. 
Let $\widetilde{\bN}(t)$, $t \ge 0$ denote this process. 
If the coordinates do stay positive until time $n' - M q_0$,
there is a strictly positive probability of hitting state $\bzero$.
When $\bzero$ is not hit exactly, we have the bound
\eqnst
{ \P [ |\widetilde{\bN}(n')| > q ]
  = \P [ |\widetilde{\bN}(n') - \bzero| > q ]
  \le C_2 \exp ( - \lambda_2 q ). }
If we now apply exactly the same moves to the configuration 
$(2 n') \by''$, we obtain that the process 
$\bN(t) = \bn' + \widetilde{\bN}(t)$ hits
$\bn' = n' \bx'$ with positive probability, and satisfies the 
bound in \eqref{e:not-deviate2}.
\end{proof}

Since the proof of Proposition \ref{prop:in-enough} is quite long,
we first give a brief outline. Suppose we can select configurations 
$\bn$ and $\bn(\ell), \dots, \bn(1)$ in such a way that:\\
(a) $\bn/n$ is bounded away from $\partial \cR_G$, so that 
we have $p_G(\bn) \le c_G + \eps$;\\
(b) $\bn(\ell), \dots, \bn(1)$ are in the respective sets $\cB_G^{II}$
with each $p_G(\bn(i))$ close to the $\limsup$ in \eqref{e:in-enough};\\
(c) We can steer the process as follows:
$\bn \to \bn(\ell) \to \bn(\ell-1) \to \dots \to \bn(1)$;\\
(d) In each steering step we hit the target exactly with probability
bounded away from $0$.\\
If $\ell$ is large, step (d) ensures that $p_G(\bn)$ cannot be 
much smaller than the smallest of the $p_G(\bn(i))$'s, and the claim will follow.
The crux of the proof is parts (c)--(d), which rely on 
Propositions \ref{prop:strengthened} and \ref{prop:hit-any}. 
The argument is somewhat delicate, since the $\bn(i)$'s now can be
arbitrarily close to $\partial \cR_G$;
recall the definition of $\cB_G^{II}$ in \eqref{e:cB_G}. 
Therefore, Propositions \ref{prop:strengthened} and \ref{prop:hit-any}
will be applied on a suitable subgraph that omits some edges. 

We carry out the plan (a)--(d). We start with some preliminaries.
The first step is to subdivide $\cB_G^{II}$ according to which part 
of $\partial \cR_G$ is close. Given $\bn \in \cB_G^{II}$, let 
\eqnsplst
{ \cG
  = \cG(\bn ; G, A) 
  = \left\{ F \subset E : 
    L^{F,n}(\bn) < \frac{\kappa A}{2^{|E|+1}} \sqrt{n} \right\} 
  \qquad \text{ and } \qquad
  \overline{F}
  = \cup \cG, }
where $\kappa$ is the constant from Lemma \ref{lem:equiv-metric}.
It may so happen that $\overline{F} = \es$, in which case the arguments
we have to make are similar to and simpler than when $\overline{F} \not= \es$.
We will not spell out such arguments. Note that $F \in \cG$ implies 
$d(F) = 0$, since $\bn \in \cB_G^{II}$. Hence we have
\eqn{e:sum-e-bound}
{ \sum_{e \in \overline{F}} n_e 
  \le \sum_{F \in \cG} \sum_{e \in F} n_e
  \le \sum_{F \in \cG} \frac{1}{2 \kappa} L^{F,n}(\bn) 
  < \frac{1}{2} A \sqrt{n}. }
This implies $d(\overline{F}) = 0$, for $n$ large enough. Note that any $F$ with 
$d(F) = 0$ that is not contained entirely inside $\overline{F}$ satisfies
\eqnst
{ \sum_{e \in F} n_e 
  \ge \frac{1}{2} L^{F,n}(\bn)
  \ge \frac{\kappa A}{2^{|E|+2}} \sqrt{n}. }
Let us abreviate $\kappa_0 = \kappa / 2^{|E|+2}$.
In the remainder of this section, we are going to fix a possible
value $F_0$ of $\overline{F}$, and argue separately for each $F_0$.
With this in mind we make the following definitions. For any 
$F_0$ such that $d(F_0) = 0$, let
\eqnspl{e:max-def}
{ \cB_G^{II}(n; A, F_0)
  &= \left\{ \bn \in \cB_G^{II}(n; A) : \, 
    \parbox{8cm}{$\sum_{e \in F_0} n_e < \frac{1}{2} A \sqrt{n}$,
    and for all $F$ not contained in $F_0$ we have  
    $\sum_{e \in F} n_e - \frac{n}{k} d(F) \ge \kappa_0 A \sqrt{n}$} \right\} \\ 
  M_n(F_0)
  &= \max \left\{ p_G(\bn) : \, \bn \in \cB_G^{II}(n; A, F_0) \right\} \\
  \beta
  &= \limsup_{n \to \infty} M_n(F_0). }
Our task is to show that $\beta \le c_G + C \exp ( - \lambda A^2 )$
for each $F_0$ such that $\cB_G^{II}(n; A, F_0)$ is non-empty.

We will need to work on subgraphs of the form $G^{H} = (V, E^{H})$,
where $E^{H} = E \setminus H$, $H \subset F_0$. We write $\bn^H$ for 
the restriction of $\bn$ to $G^H$, that is: $\bn^H = (n_e : e \in E^H)$.
When no confusion can arise, we will write $n^H = \sum_{e \in E^H} n_e$.

\begin{lemma}
If $\cB_G^{II}(n; A, F_0)$ is non-empty, then 
for any $H \subset F_0$ the graph $G^{H}$ is connected.
\end{lemma}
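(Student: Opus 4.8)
The plan is to derive connectivity of $G^H$ from the defining inequalities of $\cR_G$, using the fact that $\cB_G^{II}(n;A,F_0)$ being non-empty forces those inequalities to hold with room to spare for all $F$ not contained in $F_0$. First I would argue that $G^H$ is spanning: it has the same vertex set $V$ as $G$, so it suffices to rule out $G^H$ being disconnected. Suppose for contradiction that $G^H$ is disconnected, and let $V_1$ be (the vertex set of) one of its connected components, with $\es \subsetneq V_1 \subsetneq V$. Let $F_1 \subset E^H$ be the set of edges of $G^H$ with both endpoints in $V_1$; equivalently, $F_1$ consists of all edges of $G$ with both endpoints in $V_1$, \emph{except} those lying in $H$. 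Since no edge of $E^H$ crosses between $V_1$ and $V \setminus V_1$, every vertex of $V_1$ has all its $G^H$-incident edges inside $F_1$.

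The key observation is then a comparison of $d(F_1)$ computed in $G$ against this structure. For a vertex $v \in V_1$, its $G$-degree equals its $G^H$-degree plus the number of $H$-edges at $v$; since $H \subset F_0$ and $d(F_0) = 0$, no vertex has \emph{all} its $G$-edges in $F_0$, but that alone is not quite what I need. Instead I would observe: let $F = F_1 \cup H$, so that $F$ consists of \emph{all} edges of $G$ with both endpoints in $V_1$, together with $H$. I claim $F$ is not contained in $F_0$ unless something degenerate happens — more to the point, I would compute $d(F)$. A vertex $v$ has $\deg_F(v) = \deg_G(v)$ exactly when all $G$-edges at $v$ lie in $F$; for $v \in V_1$ this holds automatically for the edges internal to $V_1$, but $v$ might have $H$-edges (which are in $F$) or, crucially, it cannot have non-$H$ edges leaving $V_1$ because $V_1$ is a $G^H$-component — wait, it \emph{can}, since $H$-edges may leave $V_1$. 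So I would instead take $F = F_1$ itself: then for $v \in V_1$, $\deg_{F_1}(v) = \deg_G(v)$ iff $v$ has no $H$-edge and no $G$-edge leaving $V_1$; in particular at least all vertices of $V_1$ not incident to $H$ and in the interior of $V_1$ count, while vertices outside $V_1$ contribute nothing. The cleaner route: bound $\sum_{e \in F_1} n_e$ from below using that $F_1 \not\subset F_0$ (true as long as $F_1$ contains an edge outside $F_0$, which I would verify must occur — if $F_1 \subset F_0$ then all of $V_1$'s internal structure sits inside $F_0$, and combined with $\sum_{e \in F_0} n_e < \tfrac12 A\sqrt n$ and connectivity of $G$ one reaches a contradiction with $\bn \in n\cS_G$ summing to $n$).

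The main obstacle I expect is precisely this bookkeeping: correctly choosing the edge set $F$ (some combination of the internal edges of a $G^H$-component and possibly all of $H$) so that, on one hand, $d(F) = 0$ or $d(F)$ is controlled, and on the other hand the disconnectedness of $G^H$ forces $\sum_{e \in F} n_e$ to be too small to satisfy the $\cB_G^{II}(n;A,F_0)$ constraint $\sum_{e\in F} n_e - \tfrac{n}{k}d(F) \ge \kappa_0 A\sqrt n$ (for $F \not\subset F_0$) while simultaneously being consistent with $\sum_{e \in F_0} n_e < \tfrac12 A\sqrt n$. Concretely, I would argue that if $G^H$ is disconnected then the edges crossing any component boundary all lie in $H \subset F_0$, so the complement $F = E \setminus (\text{edges crossing the cut})$ has $\sum_{e \in F} n_e > n - \tfrac12 A \sqrt n$; meanwhile, since $V_1$ separates, \emph{every} vertex in $V_1$ has full $F$-degree (its only non-$F$ edges would be crossing edges, which are excluded — but crossing edges \emph{are} in $H \subset F_0$, hence removed, hence not in $F$... so actually $v\in V_1$ has $\deg_F(v) < \deg_G(v)$). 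Hmm — this sign issue is exactly the delicate point. The resolution is to take $F$ = all edges \emph{not} internal to $V_1$ and \emph{not} in $H$, i.e. the edges internal to $V\setminus V_1$; then $d(F) \ge |V_1| \ge 1 > 0$ forces $F$ into the regime $d(F) > 0$, contradicting $\bn \in \cB_G^{II}$ which requires $d(F) = 0$ for every $F$ with $L^{F,n}(\bn)$ finite — more precisely contradicting that for $F$ with $0 < d(F) < k$ we need $L^{F,n}(\bn) \ge A\sqrt n$, i.e. $\sum_{e \in F} n_e \ge \tfrac{n}{k} d(F) + \Omega(A\sqrt n)$, which combined with $\sum_{e \notin F} n_e = \sum_{\text{internal to }V_1} n_e + \sum_H n_e < \tfrac12 A\sqrt n + (\text{internal to }V_1)$ and the total being $n$ pins everything down and yields the contradiction once $n$ is large. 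I would then conclude $G^H$ is connected.
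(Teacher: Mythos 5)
There is a genuine gap, and the proposal does not quite recover.

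Your final choice $F = \{\text{edges internal to } V \setminus V_1 \text{ and not in } H\}$ has the wrong sign: no edge of $F$ is incident to any vertex of $V_1$, so $\deg_F(v) = 0 < \deg_G(v)$ for every $v \in V_1$ and $d(F)$ receives \emph{no} contribution from $V_1$. The claim $d(F) \ge |V_1|$ is therefore false (you in fact noticed this sign problem mid-argument for a different $F$, but the correction you settle on repeats it). More fundamentally, a single constraint of the type $\sum_{e\in F} n_e \ge \frac{n}{k}d(F) + \Omega(A\sqrt{n})$ does not yield a contradiction on its own, since it is perfectly consistent with $\sum_{e\in E} n_e = n$ unless $d(F)$ is essentially $k$.

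The argument the paper uses needs \emph{two} such constraints. First reduce to $H = F_0$ (removing fewer edges only helps connectivity). If $G^{F_0}$ were disconnected with vertex parts $V_1, V_2$, write $E = E_1 \cup F_0 \cup E_2$ disjointly, where $E_i$ are the non-$F_0$ edges inside $V_i$ (all crossing edges lie in $F_0$). Since $d(F_0)=0$, every vertex has a non-$F_0$ edge, and one checks $d(E_1 \cup F_0) = |V_1|$, $d(E_2 \cup F_0) = |V_2|$, both strictly between $0$ and $k$ and summing to $k$. Now apply the membership condition for $\cB_G^{II}(n;A,F_0)$ to \emph{both} sets $E_1\cup F_0$ and $E_2\cup F_0$ (each is not contained in $F_0$), obtaining $\sum_{E_i\cup F_0} n_e \ge \frac{n}{k}|V_i| + \frac12 A\sqrt{n}$. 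Adding these and subtracting $\sum_{F_0} n_e < \frac12 A\sqrt{n}$ gives $n = \sum_E n_e \ge n + \frac12 A\sqrt{n} > n$, a contradiction. Your proposal correctly identifies that the crossing edges lie in $F_0$ and that the $F_0$ mass is small, but does not reach the decomposition into two overlapping pieces $E_i \cup F_0$, which is what makes the lower bounds add up to exceed $n$.
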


\begin{proof} 
It is enough to consider $H = F_0$. Should $G^{F_0}$ not be 
connected, we could write $E = E_1 \cup F_0 \cup E_2$
as a disjoint union, where $E_1$ and $E_2$ are non-empty and 
do not share any vertex. Then we have 
$0 < d(E_1 \cup F_0), d(E_2 \cup F_0) < k$ and 
$d(E_1 \cup F_0) + d(E_2 \cup F_0) \ge k$. Therefore, 
if $\bn \in \cB_G^{II}(n; A, F_0)$, we have
\eqnsplst
{ \sum_{e \in E} n_e
  &= \sum_{e \in E_1 \cup F_0} n_e + \sum_{e \in E_2 \cup F_0} n_e 
    - \sum_{e \in F_0} n_e \\
  &\ge \frac{n}{k} d(E_1 \cup F_0) + \frac{1}{2} A \sqrt{n} 
      + \frac{n}{k} d(E_2 \cup F_0) + \frac{1}{2} A \sqrt{n}
    - \frac{1}{2} A \sqrt{n} \\
  &\ge n + \frac{1}{2} A \sqrt{n}
  > n, }
a contradiction.
\end{proof}

\begin{lemma}
\label{lem:str-applies} 
Let $H \subset F_0$ and $\bn \in \cB_G^{II}(n; A, F_0)$.\\
(i) We have $\bn^H/n^H \in \cK_{G^H}$.\\
(ii) Suppose in addition that $n_e \ge c A \sqrt{n}$
for all $e \in F_0 \setminus H$, with some $c > 0$. 
Then $\bn^H$ satisfies the assumption on the starting state of 
Proposition \ref{prop:strengthened}, with $A$ replaced by 
$\min\{ c A, \kappa_0 A \}$.
\end{lemma}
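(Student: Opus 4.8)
Throughout I write $F'=F\cup H$ for a subset $F\subset E^H$, and I set $s_H=\sum_{e\in H}n_e$, $n^H=n-s_H$, and (for part (ii)) $A'=\min\{cA,\kappa_0 A\}$. Two elementary facts drive everything. First, $d_G$ is monotone under inclusion of edge sets. Second, for $F\subset E^H$ one has the identity
\[
 d_{G^H}(F)=d_G(F\cup H),
\]
because the $G$-edges at a vertex $v$ that do not lie in $H$ are exactly its $G^H$-edges, so $F$ contains all $G^H$-edges at $v$ precisely when $F'$ contains all $G$-edges at $v$; in particular $d_G(F_0)=0$ forces $G^H$ to have minimum degree at least $1$. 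Since $H\subset F_0$ and $\bn\in\cB_G^{II}(n;A,F_0)$, we also record the crucial smallness bound $s_H\le\sum_{e\in F_0}n_e<\tfrac12 A\sqrt n$, and $n^H\le n$. With this notation, part (i) asks for $\sum_{e\in F}n_e\ge\frac{n^H}{k}d_G(F')$ for all $F\subset E^H$, and (since $\sqrt{n^H}\le\sqrt n$) part (ii) will follow once $\sum_{e\in F}n_e-\frac{n^H}{k}d_G(F')\ge A'\sqrt n$ is shown for all $\es\subsetneq F\subsetneq E^H$.

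For part (i) I would split on $d_G(F')$. If $d_G(F')=0$ the inequality is the trivial $\sum_{e\in F}n_e\ge0$; if $d_G(F')=k$ then $F'=E$, so $F=E^H$ and the inequality holds with equality. In the remaining range $0<d_G(F')<k$ one has $F'\not\subset F_0$ (otherwise $d_G(F')\le d_G(F_0)=0$) and $\es\subsetneq F'\subsetneq E$, so membership of $\bn$ in $\cB_G^{II}(n;A)$ gives $\sum_{e\in F'}n_e-\frac nk d_G(F')=L^{F',n}(\bn)/(a_{F'}+b_{F'})\ge\tfrac12 A\sqrt n$ (using $L^{F',n}(\bn)\ge A\sqrt n$ and $a_{F'}+b_{F'}\le2$). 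Subtracting $s_H$, and using $n\ge n^H$ together with $s_H<\tfrac12 A\sqrt n$, yields $\sum_{e\in F}n_e\ge\frac{n^H}{k}d_G(F')+\bigl(\tfrac12 A\sqrt n-s_H\bigr)>\frac{n^H}{k}d_G(F')$, which is exactly part (i).

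For part (ii) I would run the same trichotomy while tracking the surplus. If $d_G(F')=0$ and $F'\subset F_0$, then $F=F'\setminus H$ is a non-empty subset of $F_0\setminus H$, so picking $e_0\in F$ the added hypothesis gives $\sum_{e\in F}n_e\ge n_{e_0}\ge cA\sqrt n\ge A'\sqrt n$. If $d_G(F')=0$ and $F'\not\subset F_0$, then $F\not\subset F_0$ with $d_G(F)=0$, so the second defining inequality of $\cB_G^{II}(n;A,F_0)$ gives $\sum_{e\in F}n_e\ge\kappa_0 A\sqrt n\ge A'\sqrt n$; the case $d_G(F')=k$ cannot occur since $F'\subsetneq E$. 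In the main case $0<d_G(F')<k$ I would write
\[
 \sum_{e\in F}n_e-\frac{n^H}{k}d_G(F')=\Bigl(\sum_{e\in F'}n_e-\frac nk d_G(F')\Bigr)-s_H\Bigl(1-\frac{d_G(F')}{k}\Bigr),
\]
bound the first bracket below by $\tfrac12 A\sqrt n$ as before, and — now crucially using $d_G(F')\ge1$ — bound the subtracted term above by $s_H\frac{k-1}{k}<\tfrac12 A\sqrt n\cdot\frac{k-1}{k}$, which leaves a surplus strictly larger than $\frac{A\sqrt n}{2k}$. Since $\kappa\le2$ and $|E|\ge k-1$ (with $k\ge3$), we have $\kappa_0=\kappa/2^{|E|+2}\le 2^{-k}\le\frac1{2k}$, so $A'\le\kappa_0 A\le\frac{A}{2k}$ and this surplus dominates $A'\sqrt n$.

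The one delicate point is the case $0<d_G(F')<k$: passing to the subgraph $G^H$ costs exactly $s_H$ in the available slack $\sum_{e\in F}n_e-\frac{n^H}{k}d_G(F')$, so one needs an a priori slack that beats $s_H$. That is precisely why $\cB_G^{II}(n;A,F_0)$ carries two separate estimates — a slack of order $A\sqrt n$ inherited from $\bn\in\cB_G^{II}(n;A)$ (through the factor $1/(a_F+b_F)\ge\tfrac12$) set against the bound $s_H<\tfrac12 A\sqrt n$ — and for part (ii) the extra factor $\frac1k$ gained from $d_G(F')\ge1$, combined with $\kappa_0$ being tiny, is exactly what allows the surplus $A'\sqrt n$. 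Everything else is bookkeeping about which defining inequality of $\cB_G^{II}(n;A,F_0)$ to invoke in each subcase.
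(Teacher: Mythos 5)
Your proof is correct and follows the same three-way split as the paper (the key identity $d_{G^H}(F)=d_G(F\cup H)$, then $d_G(F')\ge 1$, versus $d_G(F')=0$ with $F\not\subset F_0$, versus $F\subset F_0\setminus H$), using the same sources of slack: the $\cB_G^{II}(n;A)$ constraint for the main case, the $\kappa_0 A\sqrt n$ constraint of $\cB_G^{II}(n;A,F_0)$ when $F\not\subset F_0$ with $d_G(F)=0$, and the added hypothesis $n_e\ge cA\sqrt n$ on $F_0\setminus H$. The one genuine difference is that your surplus bookkeeping in the main case is more careful than what the paper writes. The paper's chain in its display for that case uses $\sum_{e\in F\cup H}n_e\ge\frac{n}{k}d(F\cup H)+A\sqrt n$, but the definition of $\cB_G^{II}(n;A)$ is $L^{F',n}(\bn)\ge A\sqrt n$, and together with $L^{F',n}(\bn)=(a_{F'}+b_{F'})(\sum_{F'}n_e-\tfrac nk d(F'))\le 2(\sum_{F'}n_e-\tfrac nk d(F'))$ this only yields $\sum_{F'}n_e-\tfrac nk d(F')\ge\tfrac12 A\sqrt n$, not $A\sqrt n$; after subtracting $s_H<\tfrac12 A\sqrt n$ the remaining surplus is not obviously $\tfrac12 A\sqrt{n^H}$ as stated. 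Your exact identity
\[
\sum_{e\in F}n_e-\frac{n^H}{k}d_G(F')=\Bigl(\sum_{e\in F'}n_e-\frac nk d_G(F')\Bigr)-s_H\Bigl(1-\frac{d_G(F')}{k}\Bigr)
\]
together with $d_G(F')\ge 1$ gives a surplus $>\frac{A\sqrt n}{2k}$, and your observation that $\kappa_0\le 2^{-k}\le\frac{1}{2k}$ (using $|E|\ge k-1$, $\kappa\le 2$, $k\ge 3$) closes the gap cleanly. So your version is not just correct but supplies a step the paper's write-up glosses over.
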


\begin{proof}
Both statements will be proved by the same computations.
Let $\es \subsetneq F \subsetneq (E \setminus H)$.
Since $d(H) \le d(F_0) = 0$, we have $d(F \cup H ; G) = d(F ; G^{H})$.
When this common value is $\ge 1$, we have
\eqnspl{e:restr-ge1}
{ \sum_{e \in F} n_e
  &\ge \sum_{e \in F \cup H} n_e - \frac{1}{2} A \sqrt{n}
  \ge \frac{n}{k} d(F \cup H ; G) + A \sqrt{n} - \frac{1}{2} A \sqrt{n} \\
  &\ge \frac{n^{H}}{k} d(F ; G^{H}) + \frac{1}{2} A \sqrt{n^H} 
  \ge \frac{n^{H}}{k} d(F ; G^{H}). }
This already suffices for part (i). When $d(F \cup H ; G) = d(F; G^H) = 0$ 
and $F$ is not a subset of $F_0$, we have
\eqn{e:restr-=0-1}
{ \sum_{e \in F} n_e 
  \ge \kappa_0 A \sqrt{n}
  \ge \kappa_0 A \sqrt{n^H}. }
When $\es \subsetneq F \subset F_0 \setminus H$, under the assumption
made in part (ii) we have
\eqn{e:restr-=0-2}
{ \sum_{e \in F} n_e
  \ge c A \sqrt{n}
  \ge c A \sqrt{n^H}. }
The three cases \eqref{e:restr-ge1}, \eqref{e:restr-=0-1} and
\eqref{e:restr-=0-2} complete the proof of part (ii).
\end{proof}

The main technical difficulty in the proof of 
Proposition \ref{prop:in-enough} is that we have no
control over how small $n_e(i)$ can get for $e \in F_0$, and therefore
these coordinates \emph{must} be hit exactly at each stage. 
We can do this, if the difference $n_e(i+1) - n_e(i) \ge 0$
is sufficiently small so that we have enough opportunity to 
play these edges (once the exact value is achieved,
we can ignore any such edge, since $d(F_0) = 0$.
The configurations introduced next will help us
overcome this technical difficulty.

Let $\bx^{*,F_0}$ denote the configuration 
introduced in \eqref{e:x^*}, with the graph $G$ replaced by $G^{F_0}$. 
Given $\delta > 0$ and $H \subsetneq F_0$, let 
\eqnst
{ \by^{*,F_0}(\delta; H) 
  = (1 - \delta) \bx^{*,F_0} 
    + \delta \frac{1}{|F_0 \setminus H|} \sum_{e \in F_0 \setminus H} \bone^e, }
where all vectors are regarded as being in $\R^{E^H}$.
Let $\bn^{*,F_0}(H) = n \by^{*,F_0}(\delta ; H) + O(1)$.

\begin{lemma}
\label{lem:x^*-mod} \ \\
(i) We have $\bx^{*,F_0} \in \cK_{G^H}$.\\
(ii) For all sufficiently small $\delta > 0$ we have 
$\by^{*,F_0}(\delta; H) \in \cR_{G^H}$ and 
$\dist(\by^{*,F_0}(\delta; H), \partial \cR_{G^H}) \ge \delta (B |F_0 \setminus H|)^{-1}$.\\
(iii) There exists $c_7(G) > 0$ such that for all sufficiently small $\delta > 0$
and all $\es \subsetneq F \subsetneq E^{F_0}$ we have
\eqnst
{ \left( \sum_{e \in E^{F_0}} n^{*,F_0}_e(H) \right)^{-1} 
    \sum_{e \in F} n^{*,F_0}_e(H) 
  \ge \frac{d(F ; G^{F_0})}{k} + c_7. }
\end{lemma}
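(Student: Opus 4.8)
The three parts are all linear-algebra verifications about the vectors $\bx^{*,F_0}$ and $\by^{*,F_0}(\delta;H)$ in the edge space $\R^{E^H}$, and the plan is to push them through using the definition of $\bx^*$ in \eqref{e:x^*} applied to the graph $G^{F_0}$, together with the containment $E^{F_0} \subset E^H$. For part (i) the key observation is that for $\es \subsetneq F \subsetneq E^H$ we have $d(F ; G^H) \le d(F ; G^{F_0})$ whenever $F \subset E^{F_0}$, and more generally that adding back edges of $F_0 \setminus H$ can only increase degrees in $G^H$ relative to $G^{F_0}$; one should check the inequality $\sum_{e \in F} x^{*,F_0}_e \ge d(F;G^H)/k$ by the same sum-exchange computation as in the proof of Lemma~\ref{lem:prop-R_G}(i), noting that $\bx^{*,F_0}$ is supported on $E^{F_0}$ so the sum over $F$ is really a sum over $F \cap E^{F_0}$, and that a vertex counted in $d(F;G^H)$ contributes its full incident weight. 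The one thing to be careful about is vertices that have full degree in $G^H$ but whose incident edges lie partly in $F_0 \setminus H$: such vertices contribute $0$ to $\bx^{*,F_0}$, so one must verify $d(F;G^H)$ does not over-count them, i.e.\ that a vertex with $\deg_{F}(v) = \deg_{G^H}(v)$ and some incident edge in $F_0 \setminus H$ would force $\sum_{e\in F}x_e^{*,F_0} > 0$ from elsewhere — this is where connectedness of $G^{H}$ (the previous lemma) and the strictness already present in Lemma~\ref{lem:prop-R_G} are used.

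For part (ii), $\by^{*,F_0}(\delta;H)$ is a convex combination of $\bx^{*,F_0}$, which lies in $\cK_{G^H}$ by part (i), and the probability vector $\frac{1}{|F_0\setminus H|}\sum_{e \in F_0 \setminus H}\bone^e$. The plan is to show that for small $\delta$ the combination lies in the \emph{open} set $\cR_{G^H}$ by checking each constraint $\sum_{e\in F}y_e > d(F;G^H)/k$ separately: constraints for which $\bx^{*,F_0}$ already has strict slack survive a small perturbation, and the only constraints where $\bx^{*,F_0}$ might be tight are those with $F \cap (F_0\setminus H) = \es$ achieving equality in part (i) — but then the $\delta$-perturbation by $\frac{1}{|F_0\setminus H|}\sum_{e\in F_0\setminus H}\bone^e$, which puts \emph{zero} extra mass on such $F$ but strictly positive mass on $E^H \setminus F$, strictly \emph{helps} the inequality $\sum_{e\in F} y_e \ge d(F;G^H)/k$ since it decreases the left side relative to the normalization; care is needed with the normalization since $\by^{*,F_0}(\delta;H)$ is already a probability vector. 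The distance bound $\dist(\by^{*,F_0}(\delta;H),\partial\cR_{G^H}) \ge \delta(B|F_0\setminus H|)^{-1}$ then follows by applying Lemma~\ref{lem:equiv-metric} (with $G$ replaced by $G^H$): the minimal slack $\min_F\{\sum_{e\in F}y_e - d(F;G^H)/k\}$ is at least the $\delta$-multiple of the slack contributed by the added probability vector, which is bounded below by $\delta/|F_0\setminus H|$ since that vector assigns at least $1/|F_0\setminus H|$ total mass outside any proper $F$ missing an edge of $F_0\setminus H$, and dividing by the Lipschitz constant $B$ converts minimal slack to distance.

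For part (iii), note that $\bn^{*,F_0}(H)$ lives on $E^H$ but the claim only involves its restriction to $E^{F_0}$, on which $\by^{*,F_0}(\delta;H)$ agrees with $(1-\delta)\bx^{*,F_0}$ up to the $O(1)$ rounding and the normalization constant $\sum_{e\in E^{F_0}}n^{*,F_0}_e(H)$, which is $(1-\delta)n + O(1)$. So dividing, $\left(\sum_{e\in E^{F_0}}n^{*,F_0}_e(H)\right)^{-1}\sum_{e\in F}n^{*,F_0}_e(H) = \sum_{e\in F}x^{*,F_0}_e + O(1/n)$, and the strict inequality $\sum_{e\in F}x^{*,F_0}_e > d(F;G^{F_0})/k$ from Lemma~\ref{lem:prop-R_G}(i) applied to $G^{F_0}$ — which is strict and, by compactness of the finite collection of proper subsets $F$, uniformly strict by some amount $2c_7 = 2c_7(G) > 0$ — gives the claim for $n$ large and $\delta$ small. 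The main obstacle is bookkeeping: keeping straight the three ambient graphs $G$, $G^{H}$, $G^{F_0}$ and the corresponding degree functions, and handling vertices whose incidence pattern straddles $F_0 \setminus H$ in part (i); once the degree identity $d(F;G^H) = d(F\cup H;G)$ (valid because $d(F_0)=0$, as used in Lemma~\ref{lem:str-applies}) and its $G^{F_0}$-analogue are in hand, the inequalities themselves are immediate from the already-established properties of $\bx^*$.
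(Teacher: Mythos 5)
Your overall strategy matches the paper's: reduce part (i) to the strict inequality of Lemma~\ref{lem:prop-R_G}(i) applied to $G^{F_0}$ via degree identities relating $G$, $G^H$, $G^{F_0}$; get part (ii) from convexity plus a small-$\delta$ perturbation and Lemma~\ref{lem:equiv-metric}; and get part (iii) by noting that the restriction to $E^{F_0}$ and the normalization change things only by $O(\delta) + O(1/n)$. Part (iii) is fine, and part (i) is on track (though the paper makes the case analysis explicit: for $F\not\subset F_0\setminus H$ and $E\setminus F_0\not\subset F$ one uses $\sum_{e\in F}x^{*,F_0}_e = \sum_{e\in F\setminus F_0}x^{*,F_0}_e > d(F\setminus F_0;G^{F_0})/k = d(F\cup(F_0\setminus H);G^H)/k \ge d(F;G^H)/k$, which handles the vertices you worry about without appeal to connectedness of $G^H$ itself).

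However, part (ii) has a concrete error in identifying the tight constraints, and the surrounding reasoning does not recover from it. You assert that $\bx^{*,F_0}$ can be tight only for $F$ with $F\cap(F_0\setminus H)=\es$. This is backwards. For such $F$ (which lie in $E^{F_0}$), the strict inequality from Lemma~\ref{lem:prop-R_G}(i) on $G^{F_0}$ gives $\sum_{e\in F}x^{*,F_0}_e > d(F;G^{F_0})/k \ge d(F;G^H)/k$, so these are \emph{never} tight. The genuinely tight constraints are exactly the $\es\subsetneq F\subset F_0\setminus H$, where both sides are zero (cf.\ Eqn.~\eqref{e:subset} in the paper). And for those $F$ the perturbation $\delta\,|F_0\setminus H|^{-1}\sum_{e\in F_0\setminus H}\bone^e$ puts \emph{positive} mass $\delta|F|/|F_0\setminus H|$ directly on $F$, turning the equality into a strict inequality — the opposite of your claim that it ``puts zero extra mass on such $F$'' and ``decreases the left side.'' As written, if your identification of the tight $F$ were correct, the perturbation argument would \emph{fail}: the perturbation would strictly decrease $\sum_{e\in F}y_e$ below $d(F;G^H)/k$ for the tight $F$. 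The final distance bound you state is correct, but the justification needs to be rebuilt around the correct tight set $\{F: \es\subsetneq F\subset F_0\setminus H\}$.
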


\begin{proof}
(i) Let $\es \subsetneq F \subsetneq E^H$. We first consider the case 
when $F \not\subset F_0 \setminus H$ and 
$E \setminus F_0 \not\subset F$. Then we have
\eqn{e:not-subset1}
{ \sum_{e \in F} x^{*,F_0}_e
  = \sum_{e \in F \setminus F_0} x^{*,F_0}_e
  > \frac{d(F \setminus F_0 ; G^{F_0})}{k}
  = \frac{d(F \cup (F_0 \setminus H) ; G^H)}{k}
  \ge \frac{d(F ; G^H)}{k}. }
When $F \not\subset F_0 \setminus H$ and $E \setminus F_0 \subset F$, we have instead
\eqn{e:not-subset2}
{ \sum_{e \in F} x^{*,F_0}_e
  = \sum_{e \in F \setminus F_0} x^{*,F_0}_e
  = 1 
  > \frac{d(F ; G^H)}{k}. }
If $\es \subsetneq F \subset F_0 \setminus H$, we have
\eqn{e:subset}
{ \sum_{e \in F} x^{*,F_0}_e
  = 0 
  = \frac{d(F ; G^H)}{k}. }
This completes the proof of part (i).

(ii) If $\delta$ is sufficiently small, the inequalities \eqref{e:not-subset1}
and \eqref{e:not-subset2}, with $\bx^{*,F_0}$ replaced by $\by^{*,F_0}(\delta; H)$, 
remain strict. Also, Eqn.~\eqref{e:subset} becomes a strict inequality. 
The lower bound on the distance follows from Lemma \ref{lem:equiv-metric}. 

(iii) This follows from \eqref{e:not-subset1}, since the normalization
factor in the front is $[n (1 - O(\delta))]^{-1}$.
\end{proof}

\begin{proof}[Proof of Proposition \ref{prop:in-enough}.]
Given $\eps > 0$, we select a subsequence along which 
$M_n(F_0) > \beta - \eps$. For each $n$ in the subsequence,
select $\bn \in \cB_G^{II}(n, F_0)$ such that  
$p_G(\bn) > \beta - \eps$. By passing to a further subsequence,
we may assume that for each $e \in F_0$ the coordinates
$n_e$ are nondecreasing along the subsequence.

We now choose $\bn(1), \dots, \bn(\ell)$ and $\bn$.
Let $n(1) < \dots < n(\ell)$ and let $\bn(i) \in \cB_G^{II}(n(i); F_0)$,
$i = 1, \dots, \ell$, be a sequence of points such that:\\
(i) $n(i+1) \ge 2 (2 K_6 n(i))^{1/\gamma}$, $i = 1, \dots, \ell-1$;\\
(ii) $n_e(i+1) \ge n_e(i)$, for all $e \in F_0$, 
$i = 1, \dots, \ell-1$;\\
(iii) $p_G(\bn(i)) \ge \beta - \eps$, $i = 1, \dots, \ell$.\\
We further define $\bn$ in the following way. 
Let $n = 2 K_6 n(\ell)$, where $K_6$ is the constant of 
Proposition \ref{prop:hit-any}, and let 
$\bn = K_6 \, n(\ell) \, \by^{*,F_0}(\delta_1; \es) + K_6 \, \bn(\ell) + O(1)$ 
for a small $\delta_1 > 0$ for which the conclusions of 
Lemma \ref{lem:x^*-mod}(ii)--(iii) hold.
We will need that for all $e \in F_0$ we have
\eqn{e:F_0-bound}
{ n_e
  \le K_6 \, n(\ell) \, \frac{\delta_1}{|F_0|} 
     + K_6 \, \frac{1}{2} \, A \, \sqrt{n(\ell)} + O(1)
  < 2 \delta_1 K_6 \, n(\ell)
  = \delta_1 n, }
if $n(\ell)$ is large enough.
Also note that an application of Theorem \ref{thm:phase-trans-graph}(ii) 
yields $p_G(\bn) < c_G + \eps$.

We now define the strategy to steer from $\bn$ towards $\bn(\ell)$.
We first employ a strategy that plays an edge $e \in F_0$
with $N_e(t) > n_e(\ell)$, whenever that is possible, but never plays 
an edge $e \in F_0$ with $N_e(t) = n_e(\ell)$. We stop the first 
time $t$ when for all $e \in F_0$ we have $N_e(t) = n_e(\ell)$.
Such a strategy exists, since $d(F_0) = 0$. Since we start
with $N_e(0) - n_e(\ell) \le \delta_1 n$ (recall \eqref{e:F_0-bound}), 
if $\delta_1$ is sufficiently small, there is probability 
$\ge 1 - \exp ( - \lambda n )$ that we stop before time 
$C \delta n$ for some $C = C(G)$ and $\lambda > 0$. 
Moreover, the value on every edge is decreased 
by an amount at most $C \delta n$, and therefore it follows 
from Lemma \ref{lem:x^*-mod}(iii) that the configuration 
$\bn'$ reached has the property that  
$(\bn')^{F_0}$ is bounded away from $\partial \cR_{G^{F_0}}$.

We can now apply Proposition \ref{prop:hit-any} to 
$(\bn')^{F_0}$ and $(\bn(\ell))^{F_0}$
on the connected graph $G^{F_0}$. We can implement 
the moves given by the strategy in that proposition as a strategy
on $G$, because $d(F_0) = 0$.
Let $\varphi_\ell(\br(\ell))$ denote the probability that 
at time $n(\ell)$ we reach state $\bn(\ell) + \br(\ell)$.
Let us write $c_\ell = \varphi_\ell(\bzero)$ for the probability
that $\bn(\ell)$ was hit exactly. Note that since 
we applied the strategy on $G^{F_0}$, we have $r_e(\ell) = 0$
for all $e \in F_0$. This restriction will be implicit
in our notation. Proposition \ref{prop:hit-any} implies
\eqnspl{e:1st-hit}
{ c_G + \eps
  &\ge p_G(\bn)
  \ge c_\ell p_G(\bn(\ell)) + \sum_{\br(\ell) \not= \bzero} \varphi_\ell(\br(\ell)) \, 
     p_G(\bn(\ell) + \br(\ell)) \\
  &\ge c_\ell (\beta - \eps) + \sum_{0 < |\br(\ell)| < \nu A \sqrt{n(\ell)}} 
     \varphi_\ell(\br(\ell)) \, p_G(\bn(\ell) + \br(\ell)). }
with any $\nu > 0$. The value of $\nu$ will be chosen in what follows.

We now inductively define the strategy that steers 
from $\bn(i+1) + \br(i+1)$ towards $\bn(i)$, for 
$i = \ell-1, \ell-2, \dots, 1$. We assume
$|\br(i+1)| < \nu A \sqrt{n(i+1)}$. Let  
\eqnst
{ H
  = \{ e \in F_0 : n_e(i+1) < \delta_2 A \sqrt{n_{i+1}} \}, }
where $\delta_2 > 0$ will be chosen in a moment. We will first 
reduce the edges in $H$ to their target value $n_e(i)$.
Then we use Proposition \ref{prop:strengthened} and 
Proposition \ref{prop:main-steer} in $G^H$ to reach
a target where the edges $e \in F_0 \setminus H$ do not
have much excess compared to $n_e(i)$, so that these can be 
reduced to $n_e(i)$ as well. Following this, we use
Proposition \ref{prop:hit-any} in $G^{F_0}$ to hit $\bn(i)$.

The first part of the strategy is to reduce the value on each 
edge $e \in H$, whenever that is possible, until it equals $n_e(i)$, 
and in such a way that no edge in $F_0 \setminus H$ is used. 
We stop the first time $t$ when $N_e(t) = n_e(i)$ for all $e \in H$. 
Since $d(F_0) = 0$, such strategy exists. The goal is achieved before time 
$C \delta_2 A \sqrt{n(i+1)}$ with probability 
$\ge 1 - \exp ( - \lambda \sqrt{n(i+1)} )$, if $\delta_2$ is sufficiently small. 
Moreover, the value of every $e \in E \setminus F_0$ is decreased by
no more than $C \delta_2 A \sqrt{n(i+1)}$.
Let $\bn'(i+1)$ denote the configuration reached.

\begin{lemma}
\label{lem:applies}
If $\delta_2$ and $\nu$ are sufficiently small, the restriction of the 
configuration $\bn'(i+1)$ to $G^H$ satisfies the assumption on the starting 
state of Proposition \ref{prop:strengthened} with $A$ replaced by
$\min \{ \frac{1}{2} \kappa_0 A, \delta_2 A \}$.
\end{lemma}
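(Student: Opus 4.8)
The plan is to rerun the case analysis from the proof of Lemma \ref{lem:str-applies}, now keeping track of three perturbations of the configuration $\bn(i+1)$. First, the deviation $\br(i+1)$ has $|\br(i+1)| < \nu A \sqrt{n(i+1)}$, and --- because the inductive construction ends each earlier stage with an application of Proposition \ref{prop:hit-any} on $G^{F_0}$ --- it satisfies $r_e(i+1) = 0$ for every $e \in F_0$. Second, the first phase of the strategy reduces each $e \in H$ from $n_e(i+1)$ down to $n_e(i)$, which deletes the edge set $H$ and moves at most $|F_0|\,\delta_2 A \sqrt{n(i+1)}$ units of mass. Third, on the high-probability event on which the estimates of the preceding paragraph hold, every $e \in E \setminus F_0$ loses at most $C\delta_2 A \sqrt{n(i+1)}$, while the edges of $F_0 \setminus H$ are untouched. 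Write $m = \sum_{e \in E^H} n'_e(i+1)$; since only $O(A\sqrt{n(i+1)})$ units of mass disappear in total, $n(i+1)(1 - o(1)) \le m \le n(i+1)$, so for $n(i+1)$ large we have $\sqrt m \le \sqrt{n(i+1)}$ and $\tfrac{m}{k} d \le \tfrac{n(i+1)}{k} d$ for every $d \ge 0$.

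Next I fix $\es \subsetneq F \subsetneq E^H$. Combining the second and third items above with $F \cap H = \es$, and with the fact that edges in $F \cap (F_0 \setminus H)$ are unchanged and carry no $\br$-error, I get $\sum_{e \in F} n'_e(i+1) \ge \sum_{e \in F} n_e(i+1) - C'(\delta_2 + \nu) A \sqrt{n(i+1)}$ for some $C' = C'(G)$. Exactly as in Lemma \ref{lem:str-applies}, $d(H) \le d(F_0) = 0$, $\es \subsetneq F \cup H \subsetneq E$, and $d := d(F \cup H; G) = d(F; G^H)$. There are then two cases. If $F \cup H$ is not contained in $F_0$ --- which in particular holds whenever $d \ge 1$, since a vertex of full degree in $F \cup H$ would have full degree in $F_0$, contradicting $d(F_0) = 0$ --- then the ``not contained in $F_0$'' clause in the definition of $\cB_G^{II}(n(i+1); A, F_0)$ gives $\sum_{e \in F \cup H} n_e(i+1) \ge \tfrac{n(i+1)}{k} d + \kappa_0 A \sqrt{n(i+1)}$, and subtracting $\sum_{e \in H} n_e(i+1) < |F_0|\,\delta_2 A \sqrt{n(i+1)}$ leaves $\sum_{e \in F} n_e(i+1) \ge \tfrac{n(i+1)}{k} d + (\kappa_0 - |F_0|\,\delta_2) A \sqrt{n(i+1)}$. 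If instead $F \subset F_0 \setminus H$, then $d = 0$ and, by the definition of $H$, each $e \in F$ satisfies $n'_e(i+1) = n_e(i+1) \ge \delta_2 A \sqrt{n(i+1)}$, so that $\sum_{e \in F} n'_e(i+1) \ge \delta_2 A \sqrt{n(i+1)}$ directly.

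Putting the two cases together with the comparison between $\sum_{e \in F} n'_e(i+1)$ and $\sum_{e \in F} n_e(i+1)$, and using $m \le n(i+1)$ together with $\sqrt m \le \sqrt{n(i+1)}$, I obtain in every case $\sum_{e \in F} n'_e(i+1) \ge \tfrac{m}{k} d + c\, A \sqrt m$, where $c = \min\{\kappa_0 - |F_0|\,\delta_2 - C'(\delta_2 + \nu),\ \delta_2\}$. Finally I would choose, in this order, $\delta_2 < \tfrac{1}{2} \kappa_0$, then $\nu$ small enough that $|F_0|\,\delta_2 + C'(\delta_2 + \nu) \le \tfrac{1}{2} \kappa_0$, and then $n(i+1)$ large; this gives $c \ge \delta_2$, hence $\sum_{e \in F} n'_e(i+1) \ge \tfrac{m}{k} d + \min\{\tfrac{1}{2}\kappa_0 A, \delta_2 A\}\sqrt m$, which is precisely hypothesis \eqref{e:assump} of Proposition \ref{prop:strengthened} for the graph $G^H$, the starting configuration $(\bn'(i+1))^H$, and $A$ replaced by $\min\{\tfrac{1}{2}\kappa_0 A, \delta_2 A\}$. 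I expect the only real difficulty to be this bookkeeping --- sorting the subsets $F \subset E^H$ into the two cases, and verifying that the slack inherited from $\bn(i+1) \in \cB_G^{II}(n(i+1); A, F_0)$ strictly dominates every perturbation term once the small parameters are fixed in the right order. There is no genuinely new probabilistic ingredient beyond Lemma \ref{lem:str-applies} and the construction already given.
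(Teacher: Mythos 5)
Your proof is correct and follows essentially the same case analysis as the paper (compare with the proof of Lemma \ref{lem:str-applies} and the three displayed cases \eqref{e:ge1}, \eqref{e:=0-1}, \eqref{e:=0-2}), tracking the same three perturbations: the residual $\br(i+1)$ with $r_e(i+1)=0$ on $F_0$, the reduction of $H$-edges, and the collateral loss on $E\setminus F_0$. The only difference is a mild streamlining: you merge the paper's cases $d(F\cup H)\ge 1$ and $d(F\cup H)=0,\ F\not\subset F_0$ into a single case by observing that $d(F\cup H)\ge 1$ already forces $F\cup H\not\subset F_0$ (since $d(F_0)=0$), allowing you to invoke the $\kappa_0 A\sqrt{n}$ clause of $\cB_G^{II}(n;A,F_0)$ uniformly, at the cost of a slightly weaker (but still sufficient, since $\kappa_0\le 1$) constant when $d\ge 1$.
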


\begin{proof}
The proof is similar to the proof of Lemma \ref{lem:str-applies}. 
Let $\es \subsetneq F \subsetneq E \setminus H$. 
If $d(F \cup H ; G) \ge 1$, we have
\eqnspl{e:ge1}
{ \sum_{e \in F} n'_e(i+1)
  &= \sum_{e \in F \cup H} n'_e(i+1) - \sum_{e \in H} n_e(i) 
  \ge \sum_{e \in F \cup H} n'_e(i+1) - \sum_{e \in H} (n_e(i+1) + r_e(i+1)) \\
  &\ge \sum_{e \in F \cup H} (n_e(i+1) + r_e(i+1)) - (C + |H|) \delta_2 A \sqrt{n(i+1)} \\
  &\ge \sum_{e \in F \cup H} n_e(i+1) - \sqrt{|E|} |\br(i+1)| 
     - (C + |H|) \delta_2 A \sqrt{n(i+1)} \\
  &\ge \frac{n(i+1)}{k} d(F \cup H ; G) + A \sqrt{n(i+1)} - ( \sqrt{|E|} \nu 
     + (C + |H|) \delta_2) A \sqrt{n(i+1)} \\
  &\ge \frac{n'(i+1)}{k} d(F ; G^H) + (1 - C' \nu + C'' \delta_2 ) A \sqrt{n'(i+1)}. }
Hence we will require that $1 - C' \nu - C'' \delta_2 \ge \frac{1}{2}$, say.

When $d(F \cup H ; G) = 0$ and $F$ is not a subset of $F_0$, we have
\eqnspl{e:=0-1}
{ \sum_{e \in F} n'_e(i+1)
  &\ge \sum_{e \in F} (n_e(i+1) + r_e(i+1)) - C \delta_2 A \sqrt{n(i+1)} \\
  &\ge \sum_{e \in F} n_e(i+1) - (\sqrt{|E|} \nu + C \delta_2) A \sqrt{n(i+1)} \\
  &\ge (\kappa_0 - \sqrt{|E|} \nu - C \delta_2 ) A \sqrt{n(i+1)} \\
  &\ge \frac{1}{2} \kappa_0 A \sqrt{n'(i+1)}, }
if $\nu$ and $\delta_2$ are small enough.

Finally, if $\es \subsetneq F \subset F_0 \setminus H$, we have
\eqn{e:=0-2}
{ \sum_{e \in F} n'_e(i+1)
  = \sum_{e \in F} n_e(i+1)
  \ge \sum_{e \in F} \delta_2 A \sqrt{n(i+1)}
  \ge \delta_2 A \sqrt{n'(i+1)}. }
The cases \eqref{e:ge1}, \eqref{e:=0-1} and \eqref{e:=0-2} complete the proof.
\end{proof}

We need one more auxilliary configuration. Let $n''(i) = 2 K_6 n(i)$, 
where $K_6$ is the constant from Proposition \ref{prop:hit-any}, and let 
\eqnst
{ \bn''(i) 
  = K_6 n(i) \by^{*,F_0}(\delta_1; H) 
    + (K_6 - 1) \frac{n(i)}{(n(i))^{H}} (\bn(i))^{H} 
    + \bn(i) + O(1). }
Due to Lemma \ref{lem:x^*-mod}(ii), $\bn''(i)/n''(i) \in \cR_G$ and
$(\bn''(i))^H/(n''(i))^H$ is at least distance $c \delta_1$ away from 
$\partial \cR_{G^H}$. Therefore, we can apply Proposition \ref{prop:main-steer} 
on the graph $G^H$ to steer the process from $(\bn'(i+1))^H$ to a 
$\delta_3$ neighbourhood of $(\bn''(i))^H$, which succeeds with probability at least 
$1 - C_1 \exp ( - \lambda_1 \delta_3 n(i) )$. Moreover, due to Lemma \ref{lem:x^*-mod}(iii),
the configuration $\bn''(i) + \bs$ reached this way satisfies
\eqn{e:not-subset-bound}
{ (2 K_6 n(i))^{-1} \sum_{e \in F} (n''_e(i) + s_e)
  \ge \frac{d(F ; G^{F_0})}{k} + c_7',
     \quad \es \subsetneq F \subsetneq E^{F_0}. }
Also, for $e \in F_0 \setminus H$ we have
\eqnsplst
{ (n''_e(i) + s_e) - n_e(i)
  &\ge K_6 n(i) y^{*,F_0}_e(\delta_1; H) - \sqrt{|E|} |\bs| - \frac{1}{2} A \sqrt{n(i)} \\
  &\ge K_6 n(i) \frac{\delta_1}{|F_0|} 
     - 2 K_6 n(i) \sqrt{|E|} \delta_3 - \frac{1}{2} A \sqrt{n(i)} 
  \ge 0, }
if $\delta_3 < \delta_1 (4 |F_0| \sqrt{|E|})^{-1}$ 
and $n(i)$ is large enough. On the other hand:
\eqnsplst
{ n''_e(i) + s_e
  &\le K_6 n(i) \delta_1 + \sqrt{|E|} |\bs| 
    + K_6 \frac{1}{2} A \sqrt{n(i)} (1 + O(n(i)^{-1/2})) \\
  &\le K_6 n(i) \delta_1 + 2 K_6 n(i) \sqrt{|E|} \delta_3 
  \le 2 K_6 n(i) \delta_1, }
if $n(i)$ is large enough. 

If $\delta_1$ is sufficiently small, we can now employ a strategy 
starting from state $\bn''(i) + \bs$, that reduces the values 
on all $e \in F_0 \setminus H$, whenever that is possible, 
until they all equal $n_e(i)$, but never uses an edge in $H$. 
This only changes the values on $e \in E^{F_0}$ by at most $2 C \delta_1 K_6 n(i)$,
and succeeds with probability at least $1 - \exp ( - \lambda 2 K_6 n(i) )$.
Let $\bn'''(i)$ denote the configuration reached. It follows from
\eqref{e:not-subset-bound} that $(\bn''')^{F_0}$ is bounded away from 
$\partial \cR_{G^{F_0}}$. 

Finally, we can apply Proposition \ref{prop:hit-any} on the graph 
$G^{F_0}$ with starting state $(\bn'''(i))^{F_0}$ 
and target state $(\bn(i))^{F_0}$. Let $\varphi_i(\br(i))$ 
denote the probability that at time $n(i)$ we reach state $\bn(i) + \br(i)$.
Let us write $c_i = \varphi_i(\bzero)$ for the probability
that $\bn(i)$ is hit exactly. This gives the following
inductive bound:
\eqnspl{e:i-th-hit}
{ p_G(\bn(i+1) + \br(i+1))
  &\ge c_i p_G(\bn(i)) + \sum_{\br(i) \not= \bzero} \varphi_i(\br(i)) \, p_G(\bn(i) + \br(i)) \\
  &\ge c_i (\beta - \eps) + \sum_{0 < |\br(i)| < \nu A \sqrt{n_i}} 
      \varphi_i(\br(i)) \, p_G(\bn(i) + \br(i)). } 
Combining \eqref{e:1st-hit} and \eqref{e:i-th-hit}, Proposition \ref{prop:hit-any} 
yields
\eqnsplst
{ c_G + \eps 
  &\ge (\beta - \eps) \left[ c_\ell + (1 - c_\ell) c_{\ell-1} 
    + \dots + (1 - c_\ell) \cdots (1 - c_2) c_1 \right] \\
  &\qquad\qquad - C \ell \exp ( - \lambda A^2 ) - C \exp ( - \lambda \nu A \sqrt{n_1} ). }
Since each $c_j \ge c > 0$, we extract a factor arbitrarily close 
to $\beta - \eps$. Letting $\eps \downarrow 0$ shows that 
$c_G \ge \beta (1 - e^{-c \ell}) - C \ell \exp ( - \lambda A^2)$. 
Choosing $\ell$ of order $A^2$ completes the proof.
\end{proof}

\subsection{Upper bound for $\cB_G^{III}$}
\label{ssec:III}

In the proof of Proposition \ref{prop:near-critical} we are going to
need the following lemma about supermartingales. It is a close variant
of \cite[Propositions 17.19 and 17.20]{LPWbook} and hence we
omit the proof.

\begin{lemma}
\label{lem:super-exit}
Let $Z(t)$ be a non-negative supermartingale with respect to $\cF_t$, 
and $\tau$ a stopping time with respect to $\cF_t$. Suppose that\\
(i) $Z(0) = k \ge 1$; \\
(ii) $|Z(t+1) - Z(t)| \le B$; \\
(iii) there exist constants $\sigma^2 > 0$ and $b > 0$ such that 
almost surely on the event $\{ \tau > t \}$, 
either $\Var ( Z(t+1) \,|\, \cF_t ) \ge \sigma^2$ or 
$\Var(Z(t+1) \,|\, \cF_t ) = 0$ and $\E [ Z(t+1) = Z(t) \,|\, \cF_t ] \le -b$.
Then there exists $u_1 = u_1(B,b,\sigma)$ and $C = C(b,\sigma)$ 
such that if $u \ge u_1$ then 
\eqnst
{ \P [ \tau > u ] 
   \le C \frac{k}{\sqrt{u}}. }
\end{lemma}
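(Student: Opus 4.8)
The plan is to adapt the proof of \cite[Propositions 17.19 and 17.20]{LPWbook}, after two preliminary observations that put hypothesis (iii) into a single form. First, with $c_0 = \min(\sigma^2, b^2) > 0$, on $\{\tau > t\}$ we always have $\E[(Z(t+1)-Z(t))^2 \mid \cF_t] \ge c_0$: in the first alternative this is at least $\Var(Z(t+1)\mid\cF_t) \ge \sigma^2$, and in the second $(Z(t+1)-Z(t))^2$ is $\cF_t$-measurable and equals $\E[Z(t+1)-Z(t)\mid\cF_t]^2 \ge b^2$. Second, the constraints $0 \le Z(t+1) \le Z(t)+B$ and $\E[Z(t+1)\mid\cF_t] \le Z(t)$ cannot hold together with $Z(t) < \sigma^2/B$ and $\Var(Z(t+1)\mid\cF_t) \ge \sigma^2$; combined with the second alternative (where $Z(t) \ge b$, since $Z(t+1)=Z(t)+\E[Z(t+1)-Z(t)\mid\cF_t]\ge 0$ and the drift is $\le -b$), this gives $Z(t) \ge \delta_0 := \min(\sigma^2/B, b) > 0$ on $\{\tau > t\}$, i.e.\ $\{Z(t) < \delta_0\} \subseteq \{\tau \le t\}$.

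The core step I would establish is a \emph{confinement estimate}: for every $L \ge B$ and every $w \in [0,L)$, started at value $w$ with $\tau$ not yet occurred, the probability that $Z$ stays in $[0,L)$ without $\tau$ occurring for the next $m$ steps is at most $2^{-\lfloor c_0 m/(12 L^2)\rfloor}$. To prove this, let $\nu$ be the first time either $\tau$ occurs or $Z$ reaches $L$, and consider
\eqnst
{ U_t = Z(t\wedge\nu)^2 - c_0\,(t\wedge\nu) + 2\sum_{s=0}^{t\wedge\nu - 1} Z(s)\,\big( Z(s) - \E[Z(s+1)\mid\cF_s] \big). }
Since $\E[Z(s+1)^2 - Z(s)^2 \mid \cF_s] = \E[(Z(s+1)-Z(s))^2\mid\cF_s] + 2 Z(s)\,\E[Z(s+1)-Z(s)\mid\cF_s]$, the drift terms in $U$ cancel and $\E[U_{t+1}-U_t\mid\cF_t] = \E[(Z(t+1)-Z(t))^2\mid\cF_t] - c_0 \ge 0$ on $\{t < \nu\}$, so $U_t$ is a submartingale. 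Using $0 \le Z(t\wedge\nu) \le L+B$, that $Z(s) < L$ for $s < \nu$, and that $\E\big[\sum_{s<t\wedge\nu}(Z(s) - \E[Z(s+1)\mid\cF_s])\big] = w - \E[Z(t\wedge\nu)] \le w$, the inequality $\E[U_{t\wedge\nu}] \ge w^2$ yields $c_0\,\E[t\wedge\nu] \le (L+B)^2 + 2Lw \le 6L^2$, hence $\E[\nu] \le 6L^2/c_0$. By Markov's inequality, within $12 L^2/c_0$ steps the process exits $[0,L)$ or triggers $\tau$ with probability at least $\tfrac12$; iterating over $\lfloor c_0 m/(12 L^2)\rfloor$ consecutive blocks (conditioning on $\cF$ at the start of each block) gives the bound.

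I would then combine the confinement estimate with Doob's maximal inequality $\P[\sup_t Z(t\wedge\tau) \ge L] \le k/L$ for the nonnegative supermartingale $Z(\cdot\wedge\tau)$: decomposing $\{\tau > u\}$ according to the highest level $Z(\cdot\wedge\tau)$ reaches before time $u$ and summing over dyadic scales $L \asymp 2^j k$ bounds $\P[\tau > u]$ by $\sum_{j\ge 0}\min\big(2^{-j},\,2^{-c_0 u/(C 4^j k^2)}\big)$; balancing the two terms in the minimum gives $\P[\tau > u] \le C' k\,(\log u)^{1/2}/\sqrt u$.

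The step I expect to be the main obstacle is removing this spurious $(\log u)^{1/2}$ to reach the clean bound $C k/\sqrt u$ — this is exactly the refinement carried out in \cite[Propositions 17.19 and 17.20]{LPWbook}, and it is cleanest to quote their argument. One way to see it is via the space--time ``$h$-transform'' $h(w,t) = 2\Phi\big( w/\sqrt{c_0(u-t)+\rho}\big) - 1$, where $\Phi$ is the standard normal distribution function and $\rho > 0$ a regularising constant: this $h$ solves $\partial_t h + \tfrac{c_0}{2}\partial_{ww} h = 0$ and is nondecreasing and concave in $w \ge 0$, hence — up to an error coming from the non-infinitesimal increments, absorbed using $\E[(Z(t+1)-Z(t))^2\mid\cF_t] \ge c_0$ against the curvature of $h$ — a supermartingale for $Z(\cdot\wedge\tau)$. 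Optional stopping, the estimate $h(k,0) \le \sqrt{2/\pi}\,k/\sqrt{c_0 u}$, and the lower bound $h(Z(u),u) \ge 2\Phi(\delta_0/\sqrt\rho) - 1 > 0$ on $\{\tau > u\}$ then give the claim, with $C$ depending only on $\sigma$ and $b$ (through $c_0$) and $u_1$ absorbing the dependence on $B$. Since our hypotheses differ from those of \cite{LPWbook} only through the drift alternative, which we have folded into $c_0$, we follow their proof with this substitution and omit the details.
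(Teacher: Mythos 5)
The paper does not actually prove this lemma --- it says ``It is a close variant of \cite[Propositions 17.19 and 17.20]{LPWbook} and hence we omit the proof'' --- so there is no author's argument to match against; what follows assesses your argument on its own.

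Your two preliminary reductions are correct and are precisely the ``variant'' content that an honest proof must supply: on $\{\tau>t\}$, hypothesis (iii) gives $\E[(Z(t+1)-Z(t))^2\mid\cF_t]\ge c_0=\min(\sigma^2,b^2)$ and $Z(t)\ge\delta_0=\min(\sigma^2/B,b)$. The confinement estimate via the submartingale $U_t$ is also correct. The genuine gap is in the final step. The $(\log u)^{1/2}$ loss from the dyadic decomposition is not cosmetic here: the lemma is invoked in \eqref{e:bound-prob} with $Z(0)\asymp a\sqrt n$ and a horizon of order $\sqrt a\, n$, so any factor that grows with the horizon makes the right-hand side of \eqref{e:bound-prob} diverge as $n\to\infty$, which would destroy the proof of Proposition~\ref{prop:near-critical}. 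Your two remedies do not close this. Quoting \cite{LPWbook} ``with the substitution $\sigma^2\mapsto c_0$'' is not a valid shortcut: their argument passes through the martingale part $M$ of the Doob decomposition of $Z(\cdot\wedge\tau)$ and uses a lower bound on $\Var(M(t+1)\mid\cF_t)=\Var(Z(t+1)\mid\cF_t)$, which is \emph{zero} on the second alternative of (iii); your $c_0$ bounds $\E[(\Delta Z)^2\mid\cF_t]$, a different quantity. And the $h$-transform sketch is not a proof: the supermartingale claim for $h(Z(t\wedge\tau),t\wedge\tau)$ requires the discrete second-order Taylor remainder to be absorbed into the ``curvature margin'' $\tfrac12|\partial_{ww}h|\bigl(\E[(\Delta Z)^2\mid\cF_t]-c_0\bigr)$, but that margin is exactly $0$ in the worst case, so there is nothing to absorb it into; on the drift alternative one also needs $-b\,\partial_w h + \partial_t h\le 0$, which fails once $Z(t)$ exceeds roughly $2b\,(c_0(u-t)+\rho)/c_0$, and there is no a priori bound keeping $Z$ that small.

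A clean way to finish, using only what you have already built, is a two-step bootstrap rather than the dyadic sum. \emph{Single-scale bound:} combine the confinement estimate at the single scale $L=\sqrt{c_0u/24}$ with Doob's inequality $\P[\sup_t Z(t\wedge\tau)\ge L]\le z/L$ for the non-negative supermartingale $Z(\cdot\wedge\tau)$: for every starting value $z\ge\delta_0$ and all $u\ge u_1(B,b,\sigma)$,
\eqnst{ \P_z[\tau>u]\;\le\;\tfrac14 + c\,z/\sqrt u, \qquad c=\sqrt{24/c_0}. }
\emph{Recursion:} on $\{\tau>u\}$ the time-shifted process $(Z(u+s))_{s\ge0}$ with stopping time $(\tau-u)^+$ again satisfies (ii)--(iii), and $Z(u)\ge\delta_0$, so by the tower property and the single-scale bound,
\eqnst{ \P_k[\tau>2u]\;\le\;\E_k\!\left[\mathbf{1}_{\{\tau>u\}}\Bigl(\tfrac14 + c\,Z(u)/\sqrt u\Bigr)\right] \;\le\;\tfrac14\,\P_k[\tau>u] + c\,k/\sqrt u, }
using the supermartingale bound $\E_k[Z(u)\mathbf{1}_{\{\tau>u\}}]\le\E_k[Z(u\wedge\tau)]\le k$. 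Iterating over dyadic $u$ and summing the resulting geometric series gives $\P_k[\tau>u]\le Ck/\sqrt u$ for $u\ge u_1$, with no logarithm and no $h$-transform. I recommend replacing the dyadic decomposition and the $h$-transform sketch by this recursion; the reductions and the confinement estimate in your write-up are sound and should be kept.
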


\begin{proof}[Proof of Proposition \ref{prop:near-critical}.]
Given $\eps > 0$ choose $A_0(\eps)$ large enough so that the conclusions
of Propositions \ref{prop:far-enough} and \ref{prop:in-enough}
are satisfied for all $A \ge A_0$. Under the optimal strategy, 
we consider the process 
\eqn{e:min-dev}
{ Z(t)
  = \min \{ L^{F,n-t}(\bN(t)) : \, F,\, 0 < d(F) < k \}, }
which is a supermartingale, because the $L^{F,n-t}$ are.
Since the increments of $L^{F,n}$ are bounded, 
condition (ii) of Lemma \ref{lem:super-exit} is satisfied.
We show that $Z(t)$ satisfies the condition (iii) of Lemma \ref{lem:super-exit}
as well. Let $F$ be the set contributing the minimum in \eqref{e:min-dev}.
Since $d(F) > 0$, there exists an edge $e \in F$ such that 
$N_e$ gets updated with probability at least $1/k$. 
On this event we have
\eqnst
{ L^{F,n-t-1}(\bN(t+1)) - L^{F,n-t}(\bN(t))
  = - \langle \bone^e - \bz^F, \bu^F \rangle
  =: - b(e;F) 
  < 0, }
since $d(F) < k$. Therefore, if $\Var ( Z(t+1) \,|\, \cF_t ) = 0$,
we have $\E [ Z(t+1) - Z(t) \,|\, \cF_t ] \le -b(e;F)$.
On the other hand, since there are only finitely many 
possible shifts in the values of the $L^{F,n-t}$, and only
finitely many possible vectors $\bY(t)$ (recall that there
exists a deterministic optimal strategy), if 
$\Var ( Z(t+1) \,|\, \cF_t )$ is non-zero, then 
it is bounded below by some $\sigma^2 = \sigma^2(G) > 0$.

We will choose a small $a > 0$, and subdivide $\cB_G^{III}(n; A)$ into 
the slices: 
\eqnsplst
{ \cB_G^{III}(n; a, k)
  &= \left\{ \bn \in n \cS_G : \, \min \left\{ L^{F,n}(\bn) : 
    \, F,\, 0 < d(F) < k \right\} 
    \in [a k \sqrt{n}, a (k+1) \sqrt{n}) \right\}, \\
  &\qquad\qquad  a > 0,\, -k_\maxim-2 \le k \le k_\maxim+1, }
where $k_\maxim = \lceil A/a \rceil$.
Let $\bn \in \cB_G^{III}(n; a, k)$. The idea of the proof is to 
run the martingale $p_G(\bN(t))$ until $Z(t)$ moves well
into one of the neighbouring slices, and use 
optional stopping to get an inequality 
relating the maximum of $p_G(\bn)$ over 
$\cB_G^{III}(n; a, k)$ to the maxima over
$\cB_G^{III}(n'; a, k-1)$ and $\cB_G^{III}(n'; a, k+1)$,
with $\frac{1}{4} n \le n' < n$. The parameter $a$ will be chosen small 
so that we can apply Lemma \ref{lem:super-exit} to the stopping rule.
We will need to handle $k \ge 1$, $k = 0, -1$ and $k \le -2$ separately.
It will be convenient to introduce the following notation:
\eqnsplst
{ M_n(k)
  &= \max \left\{ p_G(\bn) : \bn \in \cB_G^{III}(n ; a, k) \right\} \\
  \overline{M}_n(k)
  &= \sup_{m \ge n} M_m(k) \\
  \beta(k)
  &= \limsup_{n \to \infty} M_n(k) 
  = \lim_{n \to \infty} \overline{M}_n(k). }

\emph{Case $1 \le k \le k_\maxim$.} We define the stopping time
\eqnsplst
{ \tau_k
  &= \sqrt{a} n \left( \frac{1}{k} - \frac{1}{4 k^2} \right) \, \wedge \, 
    \inf \left\{ t \ge 0 : Z(t) < \left( k - \frac{1}{2} \right) a \sqrt{n-t} \right\} \\
  &\qquad\qquad \wedge \, \inf \left\{ t \ge 0 : 
    Z(t) \ge \left( k + \frac{3}{2} \right) a \sqrt{n-t} \right\}, }
It is straightforward to check that whenever 
$\tau_k < n ( \frac{1}{k} - \frac{1}{4 k^2} )$,
the value of $Z(\tau_k)$ is such that $\bN(\tau_k)$ is either in
the slice $\cB_G^{III}(n-\tau_k ; a, k-1)$ or in the slice
$\cB_G^{III}(n-\tau_k ; a, k+1)$.
An application of Lemma \ref{lem:super-exit} to 
$Z(t) - (k-1) a \sqrt{n}$ yields
\eqn{e:bound-prob}
{ \P \left[ \tau_k \ge \sqrt{a} n \left( \frac{1}{k} - \frac{1}{4 k^2} \right) \right]
  \le C \frac{2 a \sqrt{n}}{a^{1/4} \sqrt{n} \sqrt{ \frac{1}{k} - \frac{1}{4 k^2} }}
  \le C \frac{4 a^{3/4}}{\sqrt{\frac{a}{2 A} \left(4 - \frac{a}{2 A}\right)}}
  = \frac{4 C}{\sqrt{\frac{1}{A} \left(\frac{2}{\sqrt{a}} - \frac{\sqrt{a}}{2 A}\right)}}. }
By optional stopping, we have
\eqnspl{e:pG-mart}
{ p_G(\bn)
  &= \E [ p_G(\bN(\tau_k)) ] \\
  &\le \P [ Z(\tau_k) < k a \sqrt{n - \tau_k} ] \overline{M}_{n/4}(k-1)
    + \P [ Z(\tau_k) \ge (k+1) a \sqrt{n - \tau_k} ] \overline{M}_{n/4} (k+1) \\
  &\qquad + \P [ Z(\tau_k) \in [k a \sqrt{n - \tau_k}, (k+1) a \sqrt{n - \tau_k}) ]
      \overline{M}_{n/4} (k). }
Note that due to our choice of $a$ in \eqref{e:bound-prob} the 
probability in the third term of \eqref{e:pG-mart} is at most 
$C(A) \sqrt{a}$. Maximizing $p_G(\bn)$ over its slice yields
\eqn{e:rel-1}
{ M_n(k) 
  \le c_n(k) \overline{M}_{n/4}(k-1) + d_n(k) \overline{M}_{n/4}(k) + 
      e_n(k) \overline{M}_{n/4}(k+1),
      \quad 1 \le k \le k_\maxim, }
where $d_n(k) \le C(A) \sqrt{a}$. By stopping the supermartingale 
$Z'(t) = Z(t) - (k-1) a \sqrt{n}$ at $\tau_k$ we have
\eqn{e:super-mart-ineq}
{ 2 a \sqrt{n}
  \ge Z'(0)
  \ge \E [ Z'(\tau_k) ; Z'(\tau_k) \ge \frac{5}{2} a \sqrt{n-\tau_k} ] 
  \ge \frac{5}{2} a \sqrt{n} \sqrt{1 - \sqrt{a}} \, e_n(k). }
When $a$ is sufficiently small, the inequalties \eqref{e:super-mart-ineq}
and $d_n(k) \le C(A) \sqrt{a}$ imply that $c_n(k) \ge \frac{1}{6}$.

\emph{Case $k = -1, 0$.} We define
\eqnst
{ \tau_k
  = \frac{3}{4} a n \wedge \inf \left\{ t \ge 0 : 
    Z(t) < \left( k - \frac{1}{2} \right) a \sqrt{n-t} \right\} 
    \wedge \inf \left\{ t \ge 0 : 
    Z(t) \ge \left( k + \frac{3}{2} \right) a \sqrt{n-t} \right\}. }
We now have
\eqn{e:bound-prob-2}
{ \P \left[ \tau_k \ge \frac{3}{4} a n \right]
  \le C \frac{2 a \sqrt{n}}{\sqrt{\frac{3}{4} a n}}
  = \frac{2 \sqrt{a} C}{\sqrt{3/4}}. } 
Analogously to \eqref{e:rel-1} we obtain
\eqn{e:rel-2}
{ M_n(k) 
  \le c_n(k) \overline{M}_{n/4}(k-1) + d_n(k) \overline{M}_{n/4}(k) + 
      e_n(k) \overline{M}_{n/4}(k+1), 
      \quad k = -1, 0. }
By an argument similar to the one for the previous case, for $a$ 
sufficiently small we have $c_n(k) \ge \frac{1}{4}$.

\emph{Case $-k_\maxim-1 \le k \le -2$.} This time we define
\eqnsplst
{ \tau_k
  &= n \sqrt{a} \left( \frac{1}{1-k} - \frac{1}{4 (1-k)^2} \right) \, \wedge \,  
     \inf \left\{ t \ge 0 : Z(t) < \left( k - \frac{1}{2} \right) a \sqrt{n-t} \right\} \\
  &\qquad\qquad \wedge \, \inf \left\{ t \ge 0 : 
     Z(t) \ge \left( k + \frac{3}{2} \right) a \sqrt{n} \right\}. }
Then with the same choice of $a$ as in the case $k \ge 1$ we have
\eqnst
{ \P \left[ \tau_k > n \left( \frac{1}{1-k} - \frac{1}{4 (1-k)^2} \right) \right]
  \le C \frac{4 a^{3/4}}{\sqrt{\frac{a}{2 A} \left(4 - \frac{a}{2 A}\right)}}
  \le C(A) \sqrt{a}. }
This yields the relation
\eqn{e:rel-3}
{ M_n(k) 
  \le c_n(k) \overline{M}_{n/4}(k-1) + d_n(k) \overline{M}_{n/4}(k) + 
      e_n(k) \overline{M}_{n/4}(k+1),
      \quad -k_\maxim-1 \le k \le -2, }
where $c_n(k) \ge \frac{1}{4}$ for sufficiently small $a$.

We select a subsequence of $n$ along which $c_n(k), d_n(k), e_n(k)$ all
converge to some limits $c(k), d(k), e(k)$, as well as all $M_n(k)$ 
converge to $\beta(k)$. Then we get
\eqn{e:beta-ks}
{ \beta(k)
  \le c(k) \beta(k-1) + d(k) \beta(k) + e(k) \beta(k+1), }
Due to Proposition \ref{prop:far-enough} we have $\beta(-k_\maxim-2) \le \eps$ 
and $\beta(k_\maxim+1) \le c_G + \eps$. It is easy to deduce from the relation
\eqref{e:beta-ks} and $c(k) \ge \frac{1}{4} > 0$ that 
if $\beta(k) \ge \beta(k+1)$ then also $\beta(k-1) \ge \beta(k)$.
Hence the maximum in the variable $k$ occurs at the 
right endpoint and $\beta(k) \le c_G + \eps$
for all $-k_\maxim-2 \le k < k_\maxim+1$. This completes the proof of the 
Proposition.
\end{proof}

\section{Further Questions}
\label{sec:open}

\begin{problem}
\label{prob:exp-converge}
It is plausible that the limit $c_G$ is reached at an 
exponential rate everywhere in $\cR_G$.
If one could show that 
$p_G(\bn)$ is maximized in the interior of $\cR_G$,
then this would follow rather easily from \eqref{e:liminf}.
Can one describe the asymptotic behaviour of the optimal
strategy? 
\end{problem}

\begin{problem}
The estimates in Section \ref{sec:critical-proof} strongly
suggest Gaussian behaviour near $\partial \cR_G$. Can one 
make this more precise?
\end{problem}

\begin{problem}
It is plausible that under the optimal strategy, the games starting from
$\bn, \bn' \in n \cR_G$ (and with the same sequence of vertices drawn) 
couple with high probability. This may provide an alternative approach 
to the rather technical arguments of 
Theorem \ref{thm:phase-trans-graph}(ii)
and Proposition \ref{prop:in-enough}. 
\end{problem}

\begin{problem}
We describe a possible definition of an ``order parameter'', in analogy
with statistical physics models. Let $0 \le \alpha \le 1$, and
suppose that the player has to give up proportion $\alpha$ of
her/his moves to an adversary, at which times the  
move is chosen by the adversary. Let $p_{G,\alpha}(\bn)$ denote 
the probability of winning in such a game. Let
\eqnst
{ \theta(\bx)
  = \inf \{ 0 \le \alpha \le 1 : 
    \lim_{n \to \infty} p_{G,\alpha}(n \bx) = 0 \}. }
The methods of Theorem \ref{thm:phase-trans-graph} show that 
$\theta(\bx) > 0$ in $\cR_G$ and $\theta(\bx) = 0$ in $\cI_G$.
Can one analyze $\theta$, or a suitable alternative?
\end{problem}

\bigbreak

\end{document}